\newtheorem{theorem}{Theorem}[section]
\newtheorem{CO}[theorem]{Corollary}
\newtheorem{LE}[theorem]{Lemma}
\newtheorem{RE}[theorem]{Remark}
\newtheorem*{EG}{Example}
\newtheorem{DE}[theorem]{Definition}
\newcounter{claim_nb}[theorem]
\newtheorem{claim}[claim_nb]{Claim}
\newtheorem*{claim*}{Claim}
\newcommand{\ignore}[1]{}
\newcommand{\rank}{\mathrm{rank}}
\newcommand{\supp}{\mathrm{support}}
\newcommand{\cuboid}{\mathrm{cuboid}}
\newcommand{\ind}{\mathrm{local}}
\newcommand{\local}{\mathrm{local}}
\newcommand{\mult}{\mathrm{mult}}
\newcommand{\mat}{\mathrm{Matroid}}
\newenvironment{cproof}
{\begin{proof}
		[Proof of Claim.]
		\vspace{-1.2\parsep}}
	{ \end{proof}}
\patchcmd\maketitle{\def\@makefnmark{\rlap{\@textsuperscript{\normalfont\@thefnmark}}}}{}{}{}
\def\thanksAAffil#1{%
	\footnotemarkAAffil\protected@xdef\@thanks{\@thanks%
		\protect\footnotetextAAffil[\the \c@footnoteAAffil]{#1}}%
}
\def\thanksANote#1{%
	\footnotemarkANote%
	\protected@xdef\@thanks{\@thanks%
		\protect\footnotetextANote[\the \c@footnoteANote]{#1}}%
}
\title{From coordinate subspaces over finite fields to ideal multipartite uniform clutters}
\author{Ahmad Abdi
\thanksAAffil{Department of Mathematics,
	London School of Economics and Political Science, London WC2A 2AE,
	UK, \url{a.abdi1@lse.ac.uk }}
\and
Dabeen Lee
\thanksAAffil{Department of Industrial and Systems Engineering, KAIST, Daejeon 34126, Republic of Korea, \url{dabeenl@kaist.ac.kr}}$~$
\thanksANote{Corresponding author}}
\begin{document}
	
	\maketitle
	
	\begin{abstract}
	Take a prime power $q$, an integer $n\geq 2$, and a coordinate subspace $S\subseteq GF(q)^n$ over the Galois field $GF(q)$. One can associate with $S$ an $n$-partite $n$-uniform clutter $\mathcal{C}$, where every part has size $q$ and there is a bijection between the vectors in $S$ and the members of $\mathcal{C}$. 
	
	In this paper, we determine when the clutter $\mathcal{C}$ is \emph{ideal}, a property developed in connection to Packing and Covering problems in the areas of Integer Programming and Combinatorial Optimization. Interestingly, the characterization differs depending on whether $q$ is $2,4$, a higher power of $2$, or otherwise. Each characterization uses crucially that idealness is a \emph{minor-closed property}: first the list of excluded minors is identified, and only then is the global structure determined. A key insight is that idealness of $\mathcal{C}$ depends solely on the underlying matroid of $S$. 
	
	Our theorems also extend from idealness to the stronger \emph{max-flow min-cut} property. As a consequence, we prove the Replication and $\tau=2$ Conjectures for this class of clutters.\\
	
	\noindent {\bf Keywords.} Vector space over finite field, multipartite uniform clutter, ideal clutter, the max-flow min-cut property, minor-closed property, matroid.
	\end{abstract}

\section{Introduction}\label{sec:intro}

Let $V$ be a finite set of \emph{elements}, and let $\mathcal{C}$ be a family of subsets of $V$ called \emph{members}. A \emph{cover} is defined as a subset of $V$ that intersects every member in $\mathcal{C}$. Given weights $w\in\mathbb{Z}_+^V$, a minimum weight cover can be computed by solving the integer program $\min\{w^\top x:M(\mathcal{C})x\geq\mathbf{1},x\in\mathbb{Z}_+^V\}$,
where $M(\mathcal{C})$ is the incidence matrix of $\mathcal{C}$ whose columns are labeled by the elements and whose rows are the incidence vectors of the members. The linear programming relaxation of this integer program is the problem of minimizing $w^\top x$ over the \emph{associated set covering polyhedron} given by $Q(\mathcal{C}):=\{x\in\mathbb{R}^V:M(\mathcal{C})x\geq\mathbf{1},x\geq\mathbf{0}\}.$
For the purpose of finding a minimum weight cover, we may assume without loss of generality that no member properly contains another, in which case we call $\mathcal{C}$ a \emph{clutter} over ground set $V$~\cite{Edmonds70}. A necessary and sufficient condition for the relaxation to return an integer solution for any $w\in\mathbb{Z}_+^V$, thereby giving a minimum weight cover, is that every extreme point of $Q(\mathcal{C})$ is an integral vector, in which case we say that $\mathcal{C}$ is \emph{ideal}~\cite{Cornuejols94}. 

Every clutter whose members are pairwise disjoint is obviously ideal. Many non-trivial examples of ideal clutters can be found in Combinatorial Optimization -- let us mention a few here: the clutter of $st$-paths of a graph~\cite{Menger27}, (inclusionwise) minimal $st$-cuts of a graph~\cite{Duffin62}, minimal $T$-joins of a graph~\cite{Edmonds73}, minimal $T$-cuts of a graph~\cite{Edmonds73}, and odd circuits of a signed graph that has no odd-$K_5$ minor~\cite{Guenin01}. Each of these examples has as ground set the edge set of the associated graph. In general, it is co-NP-complete to decide whether a clutter is ideal~\cite{Ding08}, and understanding the various aspects of the theory of ideal clutters is one of the long-standing open research directions in the area: 11 of the 18 conjectures in the book \emph{Combinatorial Optimization.\ Packing and Covering}~\cite{Cornuejols01} are directly about general or special instances of ideal clutters.

\paragraph{Multipartite uniform clutters.}
In this paper we introduce a novel approach to discover and understand ideal clutters, by studying the notion of \emph{multipartite uniform clutters} defined as follows. A multipartite uniform clutter $\mathcal{C}$ is obtained as a family of hyperedges of an \emph{$n$-partite} hypergraph whose vertices are partitioned into $n$ nonempty disjoint subsets $V_1,\ldots, V_n$ for some $n\geq 1$, and every hyperedge intersects each of the subsets exactly once. Then all members of $\mathcal{C}$ have an equal size $n$, and therefore, $\mathcal{C}$ is \emph{uniform} and a \emph{clutter}. In particular, in a multipartite uniform clutter, the size of a member is equal to the number of partitions.
For example, $Q_6$, the clutter of triangles in $K_4$ given by 
$Q_6:=\left\{\{1,3,5\},\{1,4,6\},\{2,3,6\},\{2,4,5\}\right\}$,
is a 3-partite 3-uniform clutter over ground set $\{1,\ldots,6\}$ partitioned into $\{1,2\}\cup \{3,4\}\cup \{5,6\}$. The class of multipartite uniform clutters looks restricted, but in fact, it is general enough to understand the entire class of ideal clutters. More precisely, it was shown in~\cite{Abdi-cuboid} that if we had a characterization of when a multipartite uniform clutter is ideal, then this would in turn completely characterize ideal clutters. This is because any given clutter can be ``locally embedded" in a multipartite uniform clutter~\cite{Abdi-cuboid}, and we will discuss related ideas in \Cref{sec:cuboid}. This connection allows us to take a different angle on understanding idealness by studying multipartite uniform clutters.

\paragraph{Vector spaces over $GF(q)$.} Thanks to their special structure, one may take advantage of a geometric framework for constructing multipartite uniform clutters. To explain it, take a prime power $q$ and $GF(q)$, the \emph{Galois field of order $q$}. For convention, we denote by $0$ and $1$ the additive and multiplicative identities of $GF(q)$, respectively. When $q$ is a power of a prime number $p$, we call $p$ the \emph{characteristic} of $GF(q)$. $GF(q)^n$ for some $n\geq 1$ is the set of $n$-dimensional vectors whose coordinates are in $GF(q)$ and is called a \emph{coordinate space}. We say that any vector subspace of the coordinate space over $GF(q)$ is a \emph{coordinate subspace}. Throughout the paper, we refer to a coordinate subspace over $GF(q)$ as a \emph{vector space over $GF(q)$} or simply as a coordinate subspace. For any vector space $S\subseteq GF(q)^n$ over $GF(q)$, there exists a matrix $A$ whose entries are in $GF(q)$ such that $S=\left\{x\in GF(q)^n:Ax=\mathbf{0}\right\}$
where $\mathbf{0}$ denotes the vector of all zeros of appropriate dimension and all equalities in the system $Ax=\mathbf{0}$ are over $GF(q)$. Given the coordinate subspace $S$, we construct a multipartite uniform clutter in the following way. Taking $n$ disjoint copies $V_1,\ldots, V_n$ of $GF(q)$, we can view $GF(q)^n$ as $V_1\times\cdots\times V_n$ so that $S$ is a subset of $V_1\times\cdots\times V_n$. The {\it multipartite uniform clutter of $S$} is the clutter over ground set $V_1\cup\cdots\cup V_n$ defined by
\begin{equation*}
\mult(S):=\left\{\{x_1,\ldots,x_n\}:~(x_1,\ldots,x_n)\in S ,~x_i\in V_i~~\text{for}~i\in[n] \right\}.
\end{equation*}
Here, the size of a member equals the number of partitions $n$, and $\mult(S)$ is an $n$-partite $n$-uniform clutter. For example, $R_{1,1}:=\{(0,0,0),(0,1,1),(1,0,1),(1,1,0)\}$ is a vector space over $GF(2)$, and $R_{1,1}$ is equivalent to $\{(1,3,5),(1,4,6),(2,3,6),(2,4,5)\}\subseteq \{1,2\}\times\{3,4\}\times\{5,6\}$. So, $\mult(R_{1,1})$ is isomorphic\footnote{Given clutters $\mathcal{C},\mathcal{C}^\prime$, we say that $\mathcal{C}$ is \emph{isomorphic} to $\mathcal{C}^\prime$ and write $\mathcal{C}\cong\mathcal{C}^\prime$ if $\mathcal{C}^\prime$ can be obtained from $\mathcal{C}$ after relabeling the elements of $\mathcal{C}$.} to $Q_6$. There is a one-to-one correspondence between the members of $\mult(S)$ and the vectors in $S$. Although we focus on vector spaces over a finite field, we remark that the definition of multipartite uniform clutters extends to any subset of the direct product of finite groups. We discuss this further in \Cref{sec:cuboid}. 

\paragraph{Binary spaces.} Abdi, Cornu\'ejols, Guri\u{c}anov\'{a}, and Lee~\cite{Abdi-cuboid} considered vector spaces over $GF(2)$, often referred to as \emph{binary} spaces, and provided a characterization of when their multipartite uniform clutters are ideal. For example, $\mult(R_{1,1})=Q_6$ is ideal~\cite{Seymour77}. The characterization is in terms of {\it clutter minors}, or simply {\it minors}. Given a clutter $\mathcal{C}$ over ground set $V$ and disjoint subsets $I,J$ of $V$, we define $\mathcal{C}\setminus I/J$ as the clutter over $V-(I\cup J)$ that consists of the minimal sets of $\left\{C-J:~C\in\mathcal{C},~C\cap I=\emptyset\right\}$, and we say that $\mathcal{C}\setminus I/J$ is {\it the minor of $\mathcal{C}$} obtained after {\it deleting $I$} and {\it contracting $J$}. We call it a {\it proper} minor if $I\cup J\neq\emptyset$. It is well-known that if a clutter is ideal, then so is every minor~\cite{Seymour77}. It was proved in \cite{Abdi-cuboid} that for a vector space $S$ over $GF(2)$, $\mult(S)$ is ideal if and only if $\mult(S)$ has none of three special clutters as a minor if and only if the binary matroid corresponding to $S$ has the so-called sums of circuits property.

\paragraph{Our results I.} 
Motivated by the result of \cite{Abdi-cuboid} mentioned above, given a vector space $S$ over an arbitrary finite field $GF(q)$, when is $\mult(S)$ is ideal? 
In this paper, we completely answer this question. We divide our analysis into three cases. First, we consider prime powers that are odd, secondly the $q=4$ case, and thirdly powers of 2 greater than 4. What follows is a summary of our main results for the three cases.

For our first result, we need two more definitions. The \emph{support} of a vector $x\in GF(q)^n$ is defined as $\supp(x):=\{i\in[n]:x_i\neq 0\}$. Moreover, denote by $\Delta_3$ the clutter over ground set $\{1,2,3\}$ whose members are $\{1,2\},\{2,3\},\{3,1\}$. Notice that $\Delta_3$ is the clutter of edges in a triangle and that $\Delta_3$ is non-ideal because $\left(\frac{1}{2},\frac{1}{2},\frac{1}{2}\right)$ is a fractional extreme point of the associated set covering polyhedron $Q(\Delta_3)$.

\begin{theorem}[proved in \Cref{sec:mfmc-odd}]\label{q odd}
Take an odd prime power $q$, and let $S$ be a vector space over $GF(q)$. Then the following statements are equivalent:
\begin{enumerate}[(i)]
\item $\mult(S)$ is ideal,
\item $S$ admits a basis with vectors of pairwise disjoint supports, 
\item $\mult(S)$ contains no $\Delta_3$ as a minor.
\end{enumerate}
\end{theorem}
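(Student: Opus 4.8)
I would prove the cycle of implications $(ii)\Rightarrow(i)\Rightarrow(iii)\Rightarrow(ii)$. The implication $(ii)\Rightarrow(i)$ should be the easy direction: if $S$ admits a basis $b^1,\ldots,b^k$ with pairwise disjoint supports, then every vector of $S$ is a linear combination $\sum \lambda_j b^j$, and because the supports partition a subset of $[n]$, the support of such a vector is simply the union of the $\supp(b^j)$ over those $j$ with $\lambda_j\neq 0$. I expect one can show directly that $\mult(S)$ is then a ``nice'' clutter — essentially a direct product / composition of trivial pieces — and deduce idealness either by exhibiting an explicit integral description of $Q(\mathcal{C})$, or by appealing to the fact that such $S$ decomposes coordinate-wise into blocks on which $S$ is either everything ($V_i$ free) or tied together by a single basis vector, each block giving an ideal clutter, and idealness is preserved under the relevant gluing operation. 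The implication $(i)\Rightarrow(iii)$ is immediate from the quoted facts that idealness is minor-closed and that $\Delta_3$ is non-ideal.

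The real content is $(iii)\Rightarrow(ii)$, i.e.\ that if $S$ has \emph{no} basis of pairwise disjoint supports then $\mult(S)$ has a $\Delta_3$ minor. The natural strategy is contrapositive combined with the quoted principle that idealness of $\mult(S)$ depends only on the underlying matroid of $S$: I would first reduce to a statement purely about the $GF(q)$-representable matroid $M$ of $S$. A basis of $S$ with pairwise disjoint supports corresponds, I believe, to the matroid $M$ being a direct sum of ``parallel classes'' — more precisely to each connected component of $M$ being a single circuit (a uniform matroid $U_{1,t}$ or a coloop), equivalently $M$ has no $U_{2,\text{something}}$-type obstruction forcing overlap. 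So I would aim to prove: if some connected component of $M$ is \emph{not} a circuit, then $\mult(S)$ has a $\Delta_3$ minor. The key sub-lemma is to identify a small configuration in $S$ — concretely, three coordinates $i,j,k$ and an appropriate restriction/contraction of the other coordinates — such that the induced sub-clutter, after deleting and contracting elements within $V_1,\ldots,V_n$, is exactly $\Delta_3$ on a copy of $\{i,j,k\}$. Here is where oddness of $q$ must enter: in $GF(q)$ with $q$ odd we have $1+1\neq 0$, so one can find three vectors $u,v,w$ in a rank-$2$ flat of $S$ in ``general position'' (pairwise non-proportional) whose pairwise ``joins'' realize the three edges of a triangle while no single member covers all three chosen elements — over $GF(2)$ this fails because a rank-$2$ binary space has only three nonzero vectors arranged too rigidly, which is exactly why the $q=2$ characterization in \cite{Abdi-cuboid} is different.

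The main obstacle I anticipate is the bookkeeping of the minor operation: showing that after choosing the right coordinates $i,j,k$ and the right ``background'' assignment on the coordinates in $[n]\setminus\{i,j,k\}$, the deletions (removing the ``wrong'' group elements in $V_i,V_j,V_k$) and contractions (fixing the background) leave precisely $\Delta_3$ and not some larger clutter containing it. This requires understanding how membership in $\mathcal{C}=\mult(S)$ restricts when we fix coordinates: contracting $x_\ell$ in $V_\ell$ keeps only members with that coordinate value and deletes that element, so the surviving members correspond to vectors of $S$ lying in a coset-type slice. I would handle this by first passing to a minor of $S$ itself — using that $\mult(\cdot)$ interacts well with taking matroid minors — to reduce to the case $\rank(S)=2$, $n=3$, and $S$ connected (so $S$ is, up to scaling coordinates, spanned by a $2\times 3$ matrix with all columns nonzero and pairwise non-parallel, which forces $q\ge 3$). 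In that base case $\mult(S)$ has exactly $q$ members, three special ones of which — corresponding to the three ``coordinate-type'' lines missing — witness $\Delta_3$ directly; I would check this explicitly. The reduction to the base case, and verifying the base case, are routine once the framework of ``matroid minor of $S$ $\leftrightarrow$ clutter minor of $\mult(S)$'' is set up, so I would isolate that correspondence as a preliminary lemma before attacking $(iii)\Rightarrow(ii)$.
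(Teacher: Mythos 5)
Your plan is sound, and for the one substantive implication, (iii)$\Rightarrow$(ii), it takes a genuinely different route from the paper. The paper never passes through the matroid for this direction: it puts a basis of $S$ in standard form, takes two basis vectors whose supports overlap in a coordinate, forms the three points $a=\mathbf{0}$, $b=xv^1$, $c=yv^2$ in ``triangle position'', and applies a general lemma about $\Delta_3$-free multipartite clutters (\Cref{q-space:triple}) forcing a fourth point $d$; the algebra then yields $1+1=0$, impossible for odd $q$ (\Cref{LE:non-disjoint-supports}). You instead reduce to the underlying matroid: failure of (ii) means $\mat(S)$ is not a direct sum of circuits and coloops (\Cref{disj-supp}), hence has two distinct intersecting circuits and therefore a $\mat(A_3)\cong U_{1,3}$ minor --- this is the paper's \Cref{LE:non-disjoint-supports'}, a short but genuine step you leave implicit --- so by \Cref{LE:matroid-minor}, \Cref{projection}, \Cref{RE:parallel-generators} and \Cref{RE:relabel-elts}, some restriction/projection of $S$ is, up to relabelling, $T=\{x\in GF(q)^3:\;x_1+x_2+x_3=0\}$, and it remains to find $\Delta_3$ in $\mult(T)$. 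That base case is true and checkable exactly where you say oddness enters, via invertibility of $2$: contract $\alpha_1,\alpha_2,\alpha_3$ for any $\alpha$ with $\sigma:=\alpha_1+\alpha_2+\alpha_3\neq 0$ and delete every element except $u_i:=\alpha_i-\sigma/2$ in the $i$\textsuperscript{th} part; among the eight points of $T$ with coordinates in $\{\alpha_i,u_i\}$ exactly the three with one coordinate equal to $\alpha_i$ survive, so the minor is precisely $\Delta_3$ on $\{u_1,u_2,u_3\}$. Your template (reduce to the canonical matroid obstruction, then kill it by hand) is in fact how the paper treats $q=2^k>4$ (via $\mat(A_3)$ and $C_5^2$); the paper's odd-$q$ argument is more direct and, since \Cref{LE:non-disjoint-supports} also covers even $q$ (producing $Q_6$), it doubles as the engine for \Cref{mfmc}.

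A few details in your base case would need repair before this counts as a proof. Since $T$ has dimension $2$, $\mult(T)$ has $q^2$ members, not $q$; all of its members have size $3$, so no three of them can ``witness $\Delta_3$ directly'' --- you must contract one element per part and delete the rest, as above. The condition that the $2\times 3$ generator matrix has nonzero, pairwise non-parallel columns does not force $q\geq 3$: over $GF(2)$ the same configuration is $R_{1,1}$, whose cuboid $Q_6$ is ideal, which is precisely why the hypothesis ``$q$ odd'', and not the configuration itself, excludes characteristic $2$. Also, a circuit on $t$ elements is $U_{t-1,t}$, not $U_{1,t}$ (it is $\mat(A_t)\cong U_{1,t}$ that plays the role of the obstruction). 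The directions (ii)$\Rightarrow$(i) and (i)$\Rightarrow$(iii) are as in the paper (\Cref{disj->ideal} together with \Cref{set-product}, and minor-closedness of idealness).
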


The case of $GF(4)$ allows more general structures in the vector space. We say that row vectors $v^1,\ldots, v^r$ with $r\geq 2$ form a \emph{sunflower} if, after permuting the coordinates, the vectors are of the form
\[
\begin{matrix}
	v^{1}\\
	v^{2}\\
	\vdots\\
	v^{r}
\end{matrix}
\left[
\begin{array}{c|c|c|c|c}
	u^0 & u^1 & \mathbf{0} & \cdots & \mathbf{0} \\\
	u^0 & \mathbf{0} & u^2 & \cdots & \mathbf{0}\\
	\vdots    & \vdots    &   \vdots  & \ddots & \vdots \\
	u^0 & \mathbf{0} & \mathbf{0} & \cdots & u^r
\end{array}
\right]
\]
where $u^0,u^1\ldots, u^r$ are some row vectors with nonzero entries and $\mathbf{0}$ denotes a row vector of all zeros of appropriate length.

\begin{theorem}[proved in \Cref{sec:q=4}]\label{q=4}
	Let $S$ be a vector space over $GF(4)$. Then the following statements are equivalent:
	\begin{enumerate}[(i)]
		\item $\mult(S)$ is ideal,
		\item $S=S_1\times\cdots\times S_k$ where each $S_i$ has dimension at most $1$ or admits a sunflower basis,
		\item $\mult(S)$ contains no $\Delta_3$ as a minor.
	\end{enumerate}
\end{theorem}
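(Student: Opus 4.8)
The plan is to prove Theorem~\ref{q=4} by the cycle of implications $(ii)\Rightarrow(i)\Rightarrow(iii)\Rightarrow(ii)$, with the bulk of the work concentrated on the last step. The implication $(i)\Rightarrow(iii)$ is immediate: since $\Delta_3$ is non-ideal and idealness is minor-closed~\cite{Seymour77}, no ideal clutter can contain $\Delta_3$ as a minor. For $(ii)\Rightarrow(i)$, I would first observe that $\mult(S_1\times\cdots\times S_k)$ decomposes as a natural ``product'' of the clutters $\mult(S_i)$ (members are disjoint unions of members, one from each factor), and that such products preserve idealness --- this should either be a known fact about clutters or follow quickly from the polyhedral description of $Q(\mathcal{C})$, since $Q$ of a product is essentially the product of the $Q$'s. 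So it suffices to show each $\mult(S_i)$ is ideal: if $\dim S_i\le 1$ then $S_i$ is spanned by a single vector, its members are pairwise disjoint (they live in different ``copies'' for distinct scalar multiples, or more precisely the clutter is a union of disjoint stars), hence trivially ideal; if $S_i$ admits a sunflower basis, I would exhibit an explicit min-flow-max-cut / idealness certificate for the resulting clutter, perhaps by recognizing $\mult(S_i)$ as (isomorphic to) a clutter already known to be ideal, or by directly analyzing the structure forced by the sunflower pattern --- the members are governed by a single shared ``core'' block $u^0$ together with independent ``petal'' blocks $u^i$, which should make $Q(\mathcal{C})$ transparently integral.

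The heart of the matter is $(iii)\Rightarrow(ii)$: assuming $\mult(S)$ has no $\Delta_3$ minor, deduce the product-of-sunflowers structure of $S$. The strategy is to work with a representing matrix $A$ (or a parity-check matrix for $S$) over $GF(4)$ and translate the $\Delta_3$-minor condition into combinatorial constraints on the supports of rows of $A$. Concretely, I expect that a $\Delta_3$ minor arises precisely from a ``bad'' configuration of three rows whose supports pairwise overlap but do not admit the nested sunflower arrangement --- something like three rows $v^1, v^2, v^3$ with supports that pairwise intersect but whose triple intersection is ``wrong'' relative to the pairwise ones, forcing a fractional point $(\tfrac12,\tfrac12,\tfrac12)$ in a suitable minor. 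Using the fact (cited as a key insight of the paper, and presumably established earlier or in~\cite{Abdi-cuboid}) that idealness of $\mult(S)$ depends only on the underlying matroid of $S$, I can pass to a convenient matroid representation and phrase everything in terms of circuits/cocircuits. The combinatorial core is then: a $GF(4)$-representable matroid (or more precisely its representing matrix) in which no three elements form the forbidden ``triangle-like'' pattern must decompose into a direct sum of pieces each of which is a single element/parallel class or has a sunflower-structured circuit. I would prove this by induction on the number of parts/rank, repeatedly extracting a sunflower block or a rank-$\le 1$ summand and showing the remainder still has no $\Delta_3$ minor.

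The main obstacle, and where I would spend the most care, is the exact matroidal/combinatorial characterization in the $(iii)\Rightarrow(ii)$ step: identifying the minimal non-sunflower configurations and verifying that each of them genuinely produces a $\Delta_3$ minor of $\mult(S)$ --- this requires understanding precisely how deletion and contraction in $\mult(S)$ correspond to operations on $S$ (coordinate-wise restrictions and quotients of the vector space), and then exhibiting the fractional extreme point. A secondary subtlety is why $GF(4)$ sits strictly between the odd case (only disjoint-support bases allowed) and the higher powers of $2$: this should come down to the fact that $GF(4)^\times$ has order $3$, so a single ``petal'' row with nonzero entries can still be ``balanced'' in the way $\Delta_3$-avoidance demands, whereas larger fields of characteristic $2$ allow richer and ultimately non-ideal behavior. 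I would make sure the sunflower definition's insistence that $u^0,u^1,\ldots,u^r$ have \emph{all} entries nonzero is used exactly at the point where a zero entry would let one build $\Delta_3$.
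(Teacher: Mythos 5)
Your overall skeleton (the cycle of implications, the product decomposition, a matroid-level structural analysis) matches the paper, but two of the three implications are left with genuine gaps rather than proofs.

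First, in \textbf{(ii)$\Rightarrow$(i)} the sunflower case is exactly where the theorem is delicate, and your claim that the shared core/petal structure ``should make $Q(\mathcal{C})$ transparently integral'' is not tenable: the \emph{same} sunflower structure over $GF(2^k)$ with $k\geq 3$ yields a \emph{non-ideal} clutter (the paper shows it produces a $C_5^2$ minor, \Cref{PR:parallel}), so any correct argument must use $q=4$ specifically. The paper's route is: reduce idealness to idealness of all localizations (\Cref{idealness-local}), compute the localizations of $T=\{x\in GF(4)^n: x_1+\cdots+x_n=0\}$ via \Cref{LE:hyperedges} and \Cref{PR:edges of size 1 or 2}, show that over $GF(4)$ no member of size $\geq 3$ can occur (this is where $|GF(4)|=4$ and characteristic $2$ enter, since $\sum_{i=1}^k(\beta_1+\alpha_1)\in\{0,\beta_1+\alpha_1\}$ forces a contradiction), and conclude idealness from K\H{o}nig's theorem because the size-$2$ members form a bipartite graph (\Cref{GF4-ideal}); a general sunflower basis is then handled by a series-extension lemma (\Cref{RE:subdivision}) reducing to the $A_t$ case. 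None of this is in your proposal, and without it the implication is unproved.

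Second, in \textbf{(iii)$\Rightarrow$(ii)} you correctly sense that one must translate $\Delta_3$-freeness into matroid structure, but you defer the combinatorial core (``minimal non-sunflower configurations'') as a to-do, and you lean on the statement that idealness of $\mult(S)$ depends only on $\mat(S)$ as if it were an available citation; in the paper that is a consequence of the final characterization, not a tool one may assume. The actual argument identifies the concrete excluded matroid minors: it shows that if $\mat(S)$ has a $U_{2,4}$ or $\mat(K_4/e)$ minor then $\mult(S)$ has a $\Delta_3$ minor (\Cref{PR:U24}, \Cref{PR:K4/e}, using the exchange-type \Cref{q-space:triple} and the correspondence between matroid minors and restrictions/projections of $S$, \Cref{LE:matroid-minor}), then invokes Brylawski's series-parallel characterization (\Cref{Brylawski}) together with the graph-minor lemma excluding $K_4/e$ (\Cref{PR:graph-K4/e}) to conclude every block is a bridge, a circuit, or a subdivision of $A_t$ (\Cref{LE:no U24 K4/e}), and finally converts that into statement (ii) via \Cref{almost-disj-}. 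Your induction sketch does not identify these excluded minors, does not explain why avoiding them forces the sunflower/direct-sum decomposition, and does not supply the minor-construction arguments inside $\mult(S)$; as written, the heart of the implication is missing.
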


Lastly, for the case when $q$ is a power of 2 greater than 4, we define another small non-ideal clutter. $C_5^2$ is the clutter over ground set $\{1,\ldots,5\}$ whose members are $\{1,2\},\{2,3\},\{3,4\},\{4,5\},\{5,1\}$. $C_5^2$ is the clutter of edges in a cycle of length 5, and notice that $C_5^2$ is non-ideal because $\left(\frac{1}{2},\frac{1}{2},\frac{1}{2},\frac{1}{2},\frac{1}{2}\right)$ is a fractional extreme point of the associated polyhedron $Q(C_5^2)$. 

\begin{theorem}[proved in \Cref{sec:q>4}]\label{q>4}
	Let $q$ be a power of 2 such that $q>4$, and let $S$ be a vector space over $GF(q)$. Then the following statements are equivalent:
	\begin{enumerate}[(i)]
		\item $\mult(S)$ is ideal,
		\item $S$ admits a basis with vectors of pairwise disjoint supports, 
		\item $\mult(S)$ contains no $C_5^2$ as a minor.
	\end{enumerate}
\end{theorem}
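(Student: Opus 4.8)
The plan is to prove the cycle (i)$\Rightarrow$(iii)$\Rightarrow$(ii)$\Rightarrow$(i). The implication (i)$\Rightarrow$(iii) is immediate, since idealness is minor-closed and $C_5^2$ is non-ideal. For (ii)$\Rightarrow$(i): a basis $b^1,\dots,b^d$ of $S$ with pairwise disjoint supports $B_1,\dots,B_d$ writes $S$ as a direct product of the one-dimensional spaces $\langle b^i\rangle$ (living on the coordinates $B_i$) together with a trivial space on the coordinates outside $B_1\cup\dots\cup B_d$. Since $\mult$ sends direct products of vector spaces to clutter products that are ideal exactly when every factor is ideal --- the same reduction that is used to formulate condition~(ii) of \Cref{q=4} --- it suffices to observe that $\mult$ of a one-dimensional space $\langle b^i\rangle$ with $b^i$ fully supported on $B_i$ has pairwise disjoint members (distinct scalar multiples of $b^i$ disagree in every coordinate), hence is ideal, and that $\mult$ of a trivial space is a single member, also ideal.

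The heart of the theorem is (iii)$\Rightarrow$(ii), which I would prove contrapositively: if $S$ has no basis with pairwise disjoint supports, then $\mult(S)$ has a $C_5^2$ minor. The first move is to reformulate the hypothesis via the underlying matroid $M(S)$ --- in keeping with the guiding principle that idealness of $\mult(S)$ is controlled by $M(S)$: a basis of $S$ with pairwise disjoint supports exists if and only if $M(S)$ is a direct sum of parallel classes and loops, equivalently if and only if $M(S)$ has no circuit of size at least $3$. So we may fix a circuit $C$ of $M(S)$ with $|C|=k\ge 3$.

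Two operations on $S$ simultaneously realize matroid minors and clutter minors of $\mult(S)$. \emph{Puncturing} at a coordinate $\ell$ (projecting it out) amounts to contracting the whole part $V_\ell$ of $\mult(S)$, and replaces $M(S)$ by $M(S)\setminus\ell$; \emph{shortening} at $\ell$ (restricting to $\{x_\ell=0\}$ and then projecting out $\ell$) amounts to deleting every nonzero element of $V_\ell$ and then contracting the zero element of $V_\ell$, and replaces $M(S)$ by $M(S)/\ell$. Puncturing away all coordinates outside $C$ replaces $M(S)$ by $M(S)|C=U_{k-1,k}$, hence replaces $S$ by a space $S_C=\{z\in GF(q)^k:\sum_{i=1}^k w_iz_i=0\}$ with all $w_i\ne 0$; shortening $k-3$ further coordinates reaches $\{z\in GF(q)^3:w_1z_1+w_2z_2+w_3z_3=0\}$ with $w_1,w_2,w_3\ne 0$; and rescaling the three parts by $\alpha\mapsto w_i\alpha$ --- a relabelling of the ground set --- turns its $\mult$ into $\mult(S_*)$, where $S_*=\{z\in GF(q)^3:z_1+z_2+z_3=0\}$. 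As all of this is a minor of $\mult(S)$, it remains to find a $C_5^2$ minor inside $\mult(S_*)$.

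This last step is where $q>4$ is used, and it is the step I expect to be the genuine obstacle. Index the members of $\mult(S_*)$ by $m_{a,b}=\{(1,a),(2,b),(3,a+b)\}$, $a,b\in GF(q)$ (here $z_3=z_1+z_2$, since the characteristic is $2$). Pick $a\ne c$, $b\ne d$ with $a+b+c+d\ne 0$, and then pick $e\in GF(q)$ outside the four-element set $\{a+b,\,b+c,\,c+d,\,a+d\}$ --- possible precisely because $q>4$. Consider the minor of $\mult(S_*)$ obtained by deleting every element except those in $\{(1,a),(1,c),(1,d+e)\}\cup\{(2,b),(2,d),(2,a+e)\}\cup\{(3,e),(3,a+b),(3,b+c),(3,c+d)\}$ and then contracting $(1,d+e)$, $(2,a+e)$, $(3,a+b)$, $(3,b+c)$, $(3,c+d)$. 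A direct check of the nine candidate members $m_{x,y}$ shows that, under the stated genericity conditions, exactly $m_{a,b},m_{a,a+e},m_{c,b},m_{c,d},m_{d+e,d}$ survive the deletions, and that the contractions turn them respectively into $\{(1,a),(2,b)\}$, $\{(1,a),(3,e)\}$, $\{(1,c),(2,b)\}$, $\{(1,c),(2,d)\}$, $\{(2,d),(3,e)\}$ --- the edge set of the $5$-cycle through $(1,a),(2,b),(1,c),(2,d),(3,e)$, i.e.\ a copy of $C_5^2$. The whole difficulty lies in this bookkeeping (tracking which of the nine candidates survive and checking no surviving member is dominated); it also makes transparent why $q\in\{2,4\}$ must be excluded, since when $a\ne c$, $b\ne d$ and $a+b+c+d\ne 0$ the values $a+b,b+c,c+d,a+d$ are four distinct field elements, so over $GF(4)$ no choice is left for $e$.
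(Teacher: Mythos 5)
Your proposal is correct, but the hard implication (iii)$\Rightarrow$(ii) follows a genuinely different route from the paper's. The paper argues: no $C_5^2$ minor implies $\mat(S)$ has no $\mat(A_3)$ minor (\Cref{PR:parallel}, proved through the localization machinery of \Cref{LE:hyperedges} and \Cref{PR:edges of size 1 or 2}), then invokes \Cref{LE:non-disjoint-supports'} ($\mat(A_3)$ is the unique minor-minimal matroid with two intersecting circuits) and \Cref{disj-supp} to reach a disjoint-support basis. You argue contrapositively: no disjoint-support basis yields a circuit of size at least $3$ in the matroid of a generator matrix of $S$; puncturing and shortening along that circuit reduce, via \Cref{projection} and \Cref{RE:relabel-elts}, to $S_*=\{z\in GF(q)^3:z_1+z_2+z_3=0\}$; and you then build a $C_5^2$ minor in $\mult(S_*)$ by hand. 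I verified the bookkeeping: with $a\neq c$, $b\neq d$, $a+b+c+d\neq 0$ and $e\notin\{a+b,b+c,c+d,a+d\}$ (four distinct values, so $q>4$ is exactly what is needed), precisely the five members you name survive the deletions, the retained and contracted elements are pairwise distinct within each part, and the contractions produce the five edges of the $5$-cycle on $(1,a),(2,b),(1,c),(2,d),(3,e)$, with no domination issues. Your explicit gadget is thus a self-contained replacement for \Cref{PR:parallel} that avoids Section~\ref{sec:p=2-At} entirely (the paper's localization lemmas pay off again in \Cref{sec:q=4}, which you of course do not need here), and your elementary ``no circuit of size $\geq 3$ forces a direct sum of parallel classes and loops, hence a disjoint-support basis after a change of basis'' replaces \Cref{LE:non-disjoint-supports'} together with \Cref{disj-supp}. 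The implications (i)$\Rightarrow$(iii) and (ii)$\Rightarrow$(i) coincide with the paper's (\Cref{disj->ideal} via \Cref{set-product}).

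One caveat on conventions: your $M(S)$ is the column matroid of a generator matrix of $S$, i.e.\ the dual of the paper's $\mat(S)$, which is defined from a parity-check representation; by \Cref{RE:supports-matroid} the circuits of $\mat(S)$ are the minimal supports of vectors of $S$, and by \Cref{LE:matroid-minor} projecting out a coordinate \emph{contracts} in $\mat(S)$ while shortening \emph{deletes}. Read against the paper's convention, your claims ``disjoint-support basis iff no circuit of size $\geq 3$'' and ``puncturing $\leftrightarrow$ deletion, shortening $\leftrightarrow$ contraction'' are false as stated (e.g.\ $S=\langle(1,1,1)\rangle$ has a disjoint-support basis while $\mat(S)=U_{2,3}$ has a $3$-element circuit), and everything must be dualized (circuits become cocircuits). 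With the generator-matrix convention all of your steps are consistent and correct, including the identification of the punctured space with $\{z:\sum_i w_iz_i=0\}$, $w_i\neq 0$; just state the convention explicitly so it is not mistaken for the paper's $\mat(S)$.
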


\Cref{q odd}, \Cref{q=4}, and \Cref{q>4} lead to the conclusion that when $q$ is a prime power other than  2, the class of coordinate subspaces whose multipartite uniform clutter is ideal has restricted structures. Nevertheless, the main takeaway of this paper is that we propose a novel framework to study and generate idealness by multipartite uniform clutters and complete the analysis of the natural class of multipartite uniform clutters obtained from coordinate subspaces. Our analysis is based on sophisticated interplays between clutters and underlying matroids.

\paragraph{Our results II.}
We take one step further to understand the \emph{max-flow min-cut (MFMC) property}~\cite{Seymour77} for the multipartite uniform clutters from coordinate subspaces. While the idealness of a clutter corresponds to the integrality of the associated set covering polyhedron, the MFMC property is the analogue of \emph{total dual integrality}~\cite{Edmonds77,Hoffman74}. To formalize this, given a clutter $\mathcal{C}$ over ground set $V$ with weights $w\in\mathbb{Z}_+^V$, we consider $\tau(\mathcal{C},w):=\min\{w^\top x:M(\mathcal{C})x\geq\mathbf{1},x\in\mathbb{Z}_+^V\}$ and $\nu(\mathcal{C},w):=\max\left\{\mathbf{1}^\top y:M(\mathcal{C})^\top y\leq w,y\in\mathbb{Z}_+^\mathcal{C}\right\}$. Note that $\tau(\mathcal{C},w)$ computes the minimum weight of a cover of $\mathcal{C}$, whereas $\nu(\mathcal{C},w)$ computes the maximum size of a \emph{packing} of members of $\mathcal{C}$ such that each element $v$ appears in at most $w_v$ members in the packing. Here, we say that $\mathcal{C}$ has the MFMC property if $\tau(\mathcal{C},w)=\nu(\mathcal{C},w)$ holds for every $w\in\mathbb{Z}_+^V$. Hence, the MFMC property of $\mathcal{C}$ is equivalent to the total dual integrality of the linear system $M(\mathcal{C})x\geq\mathbf{1},x\geq\mathbf{0}$, and therefore it follows that the MFMC property implies idealness. The following result provides a complete characterization of the MFMC property for the multipartite uniform clutters from vector spaces.
\begin{theorem}[proved in \Cref{sec:mfmc-odd}]\label{mfmc}
	Take any prime power $q$, and let $S$ be a vector space over $GF(q)$. Then the following statements are equivalent:
	\begin{enumerate}[(i)]
		\item $\mult(S)$ has the max-flow min-cut property,
		\item $S$ admits a basis with vectors of pairwise disjoint supports, 
		\item $\mult(S)$ has none of $\Delta_3, Q_6$ as a minor.
	\end{enumerate}
\end{theorem}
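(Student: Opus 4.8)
The plan is to prove the cycle of implications (i) $\Rightarrow$ (iii) $\Rightarrow$ (ii) $\Rightarrow$ (i). The implication (i) $\Rightarrow$ (iii) is immediate: the max-flow min-cut property passes to minors, while neither $\Delta_3$ (which is not even ideal) nor $Q_6$ has it --- for $Q_6$ one has $\tau(Q_6,\mathbf 1)=2>1=\nu(Q_6,\mathbf 1)$ --- so a $\Delta_3$- or $Q_6$-minor of $\mult(S)$ would contradict the property.

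For (ii) $\Rightarrow$ (i), let $b^1,\dots,b^d$ be a basis of $S$ with pairwise disjoint supports $T_1,\dots,T_d$, and set $Z=[n]\setminus\bigcup_i T_i$. A vector lies in $S$ exactly when it vanishes on $Z$ and restricts on each $T_i$ to a scalar multiple of $b^i|_{T_i}$, so $S=S_1\times\cdots\times S_d\times\{\mathbf 0\}^Z$, where $S_i$ is the one-dimensional, full-support subspace of $GF(q)^{T_i}$ spanned by $b^i|_{T_i}$. Writing $\mathcal C'\vee\mathcal C'':=\{C'\cup C'':C'\in\mathcal C',\ C''\in\mathcal C''\}$ for clutters over disjoint ground sets, it follows that $\mult(S)=\mult(S_1)\vee\cdots\vee\mult(S_d)\vee\mult(\{\mathbf 0\}^Z)$, and every factor is a clutter whose members are pairwise disjoint. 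Hence (i) reduces to two facts: a clutter whose members are pairwise disjoint has the max-flow min-cut property, which is clear; and $\vee$ preserves the property. For the latter, fix a weight $w$: on the covering side $\tau(\mathcal C'\vee\mathcal C'',w)=\min\{\tau(\mathcal C',w'),\tau(\mathcal C'',w'')\}$, since a minimum cover must already cover one of the two factors using only its own ground set; on the packing side $\nu(\mathcal C'\vee\mathcal C'',w)=\min\{\nu(\mathcal C',w'),\nu(\mathcal C'',w'')\}$, where the nontrivial inequality ``$\ge$'' follows by trimming the larger of two optimal integral packings down to the smaller value and then combining the two via an integral transportation problem having these packing vectors as prescribed margins. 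As the property means $\tau=\nu$ for every $w$, it is inherited by $\vee$-products, and (i) follows.

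For (iii) $\Rightarrow$ (ii) we argue the contrapositive. First, condition (ii) is governed by the underlying matroid $M$ of $S$ (the column matroid of any generator matrix): $S$ has a basis with pairwise disjoint supports if and only if every circuit of $M$ has size at most $2$, equivalently $M$ has no $U_{2,3}$-minor. Indeed, if all circuits of $M$ have size $\le 2$ then $M$ is the direct sum of its parallel classes and loops, and for each parallel class a linear functional nonzero on one chosen column of it and zero on the other classes yields a vector of $S$ supported exactly on that class; these are the required basis vectors. Conversely, any circuit of size $\ge 3$ contracts down to a three-element circuit, which is a $U_{2,3}$. Second, minors of $M$ lift to minors of $\mult(S)$: the operation $M\setminus j$ (delete column $j$) corresponds to contracting the whole part $V_j$ of $\mult(S)$, producing $\mult$ of $S$ with coordinate $j$ dropped; the operation $M/j$ (contract column $j$) corresponds to deleting from $\mult(S)$ all of $V_j$ except $0_j$ and then contracting $0_j$, producing $\mult$ of $(S\cap\{x_j=0\})$ with coordinate $j$ dropped. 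Applying this one coordinate at a time, a $U_{2,3}$-minor of $M$ yields a minor of $\mult(S)$ of the form $\mult(S^\ast)$ for a two-dimensional subspace $S^\ast\subseteq GF(q)^3$ with matroid $U_{2,3}$; up to scaling the three coordinates (which only relabels the parts), $S^\ast$ is the row space of the matrix with rows $(1,0,1)$ and $(0,1,1)$. Now suppose (ii) fails, so $M$ --- and hence $\mult(S)$ via the lift --- has the data above. If $q$ is odd, then $\mult(S)$ is not ideal by \Cref{q odd}, and hence, again by \Cref{q odd}, $\mult(S)$ has a $\Delta_3$-minor. If $q$ is even, then since $1+1=0$ in $GF(q)$ one checks $S^\ast\cap\{0,1\}^3=R_{1,1}$, so deleting from each of the three parts of $\mult(S^\ast)$ every element other than $0$ and $1$ leaves precisely $\mult(R_{1,1})\cong Q_6$; hence $\mult(S)$ has a $Q_6$-minor (for $q=2$ the deletion is vacuous and $\mult(S^\ast)=Q_6$ already). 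Either way (iii) fails, which closes the cycle for all prime powers $q$.

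The step I expect to be the main obstacle is the matroid-to-clutter minor dictionary used in the last part: one must verify carefully that $M\setminus j$ and $M/j$ correspond to the stated clutter operations, that these compose so that $\mult$ commutes with iterated minors, and that all degenerate configurations are handled --- loops of $M$, the zero subspace, $d=0$, and coordinates outside every support. A secondary technical point is the integral transportation argument in (ii) $\Rightarrow$ (i), which must also cover the case where the two factors have unequal packing numbers; and the odd-$q$ branch of (iii) $\Rightarrow$ (ii), while conceptually the ``hardest'', presents no new difficulty here since it is absorbed entirely into \Cref{q odd}.
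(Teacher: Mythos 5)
Your proposal is correct, and it diverges from the paper in an interesting way, so let me compare. Directions (i)$\Rightarrow$(iii) and (ii)$\Rightarrow$(i) are essentially the paper's: the paper gets (ii)$\Rightarrow$(i) from \Cref{disj-supp}, \Cref{set-product} and \Cref{disj->ideal}, citing the fact that products preserve the MFMC property from the cuboids paper, whereas you actually prove that fact (your $\tau$- and $\nu$-formulas for the product and the integral transportation argument are fine, including the unequal-packing-number case, since trimming an integral packing is harmless). The genuine divergence is (iii)$\Rightarrow$(ii). The paper proves this for all $q$ at once via \Cref{LE:non-disjoint-supports}: put a basis in standard form, take two basis vectors overlapping in a coordinate, and use the triple lemma (\Cref{q-space:triple}) to force either a $\Delta_3$ minor or characteristic $2$ together with a projection onto $R_{1,1}$, hence a $Q_6$ minor. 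You instead pass to the column matroid $M$ of a generator matrix --- note this is the \emph{dual} of the paper's $\mat(S)$, which is represented by a parity-check matrix; your criterion ``every circuit of $M$ has size at most $2$'' is correct for your $M$ but would be false for $\mat(S)$ itself (a one-dimensional full-support $S$ has $\mat(S)$ a single long circuit), so the convention must be kept straight --- characterize failure of (ii) by a $U_{2,3}$ minor of $M$, and push matroid minors to clutter minors; your dictionary (puncturing $\leftrightarrow$ contracting a part, shortening $\leftrightarrow$ deleting all but $0_j$ and contracting $0_j$) is exactly the dual restatement of \Cref{LE:matroid-minor} combined with \Cref{projection}, so the step you flagged as the main obstacle is sound and already latent in the paper. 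This reduces to the canonical two-dimensional $S^\ast\subseteq GF(q)^3$, where for even $q$ the restriction to $\{0,1\}^3$ yields $Q_6$ directly --- a cleaner route than the claim analysis inside \Cref{LE:non-disjoint-supports}. The cost is the odd case: you import it wholesale from \Cref{q odd}. This is not circular (the paper proves \Cref{q odd} from \Cref{LE:non-disjoint-supports} and \Cref{disj->ideal}, independently of \Cref{mfmc}), but it means the hardest ingredient for odd $q$ --- in the paper, the $\Delta_3$-producing argument built on \Cref{q-space:triple} --- is assumed rather than supplied, so your proof is self-contained only for even $q$.
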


As a corollary, idealness and the MFMC property coincide when $q$ is an odd prime power or $q$ is a power of 2 greater than 4. In contrast, there is an example of a vector space over $GF(4)$ whose multipartite uniform clutter is ideal but does not have the MFMC property. We demonstrate this example in \Cref{sec:replication}. \Cref{mfmc} also has a consequence on the \emph{Replication Conjecture}, proposed by Conforti and Cornu\'ejols~\cite{Conforti93}. In particular, the Replication Conjecture is a set covering analogue of the Duplication Lemma for perfect graphs~\cite{Lovasz72}.

\begin{CO}[proved in \Cref{sec:replication}]\label{replication}
The Replication Conjecture holds true for the class of multipartite uniform clutters from coordinate subspaces.
\end{CO}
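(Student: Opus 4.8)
The plan is to deduce the corollary from \Cref{mfmc} together with two auxiliary facts: that the packing property is sandwiched between the max-flow min-cut (MFMC) property and idealness, and that the MFMC property is preserved under replicating an element. Recall that the Replication Conjecture asserts that whenever a clutter has the packing property, so does any clutter obtained from it by replacing an element with several parallel copies; hence it suffices to establish this for every clutter of the form $\mult(S)$ that has the packing property.

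The first step is to observe that, for a clutter $\mult(S)$, the packing property, the MFMC property, and the equivalent conditions of \Cref{mfmc} all coincide. One direction is immediate, since the MFMC property always implies the packing property. For the converse, suppose $\mult(S)$ has the packing property. The packing property passes to minors, and neither $\Delta_3$ nor $Q_6$ packs: in a triangle two edges are needed to meet all three edges whereas only one pair of edges is pairwise disjoint, so $\tau(\Delta_3,\mathbf{1})=2>1=\nu(\Delta_3,\mathbf{1})$, and any two triangles of $K_4$ share an edge, so $\tau(Q_6,\mathbf{1})=2>1=\nu(Q_6,\mathbf{1})$. Consequently $\mult(S)$ has neither $\Delta_3$ nor $Q_6$ as a minor, and the implication (iii)$\Rightarrow$(i) of \Cref{mfmc} shows that $\mult(S)$ has the MFMC property. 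Thus for this class the packing property and the MFMC property coincide, both being equivalent to $S$ admitting a basis of vectors with pairwise disjoint supports.

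The second step is the standard fact that replication preserves the MFMC property: if $\mathcal{C}$ has the MFMC property and $\mathcal{C}'$ is obtained by replacing an element $e$ with copies $e_1,\dots,e_k$, then $\mathcal{C}'$ also has the MFMC property. I would prove this (or simply cite it) via a weight-aggregation argument: given integer weights $w'$ on $\mathcal{C}'$, one groups, for each original member $C$ through $e$, its $k$ replicated copies; this identifies the covering and packing problems on $\mathcal{C}'$ with those on $\mathcal{C}$ under the weight $\sum_i w'_{e_i}$ at $e$, and an integral optimum there is transferred back by splitting the aggregated packing value integrally and greedily among $e_1,\dots,e_k$. Combining the two steps: if $\mult(S)$ has the packing property then it has the MFMC property, so any clutter obtained from it by replicating an element again has the MFMC property, hence the packing property — which is precisely the Replication Conjecture for $\mult(S)$. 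I do not expect a genuine obstacle here; the only points needing care are the routine checks that $\Delta_3$ and $Q_6$ fail to pack and, should a self-contained treatment be preferred over a citation, the weight-aggregation argument of the second step, while everything else is immediate from \Cref{mfmc}.
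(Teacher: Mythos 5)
Your proof is correct and its core is exactly the paper's argument: the packing property excludes $\Delta_3$ and $Q_6$ as minors (neither packs, or one can use that $\Delta_3$ is non-ideal), and then implication (iii)$\Rightarrow$(i) of \Cref{mfmc} yields the max-flow min-cut property. The only difference is one of formulation: the paper states the Replication Conjecture directly as ``the packing property implies the MFMC property,'' so your additional bridging step that the MFMC property is preserved under replicating an element (a standard fact, and your weight-aggregation sketch of it is fine) is not needed there, but it is harmless and correct.
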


Another corollary of \Cref{mfmc} is on the \emph{$\tau=2$ Conjecture}, proposed by Cornu\'ejols, Guenin, and Margot~\cite{Cornuejols00}. They showed that if the $\tau=2$ Conjecture holds, then so does the Replication Conjecture~\cite{Cornuejols00}, providing a way of tackling the Replication Conjecture.
\begin{CO}[proved in \Cref{sec:replication}]\label{tau=2}
	The $\tau=2$ Conjecture holds true for the class of multipartite uniform clutters from coordinate subspaces.
\end{CO}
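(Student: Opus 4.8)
The plan is to derive \Cref{tau=2} as a short structural consequence of \Cref{mfmc}, by showing that the \emph{only} ideal minimally non-packing clutter of the form $\mult(S)$ is $Q_6$ itself, whose covering number is $2$. Recall that a clutter \emph{packs} when $\tau(\cdot,\mathbf1)=\nu(\cdot,\mathbf1)$, that it is \emph{minimally non-packing} when it does not pack although every proper minor does, and that the $\tau=2$ Conjecture of \cite{Cornuejols00} asserts that every ideal minimally non-packing clutter $\mathcal{C}$ satisfies $\tau(\mathcal{C},\mathbf1)=2$. Accordingly, let $S$ be a vector space over $GF(q)$ such that $\mathcal{C}:=\mult(S)$ is ideal and minimally non-packing; we must prove $\tau(\mathcal{C},\mathbf1)=2$. (Observe first that $\mathcal{C}$ is genuinely a clutter in the relevant sense: $\mathbf0\in S$ forces $\mathcal{C}\neq\emptyset$, and every member has size $n\geq 1$, so $\emptyset\notin\mathcal{C}$; thus no degenerate situation intervenes.)

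The first step is to locate an excluded minor. Since the MFMC property gives $\tau(\cdot,w)=\nu(\cdot,w)$ for every $w\in\mathbb{Z}_+^V$, in particular for $w=\mathbf1$, every MFMC clutter packs; hence the non-packing clutter $\mathcal{C}$ does not have the MFMC property. \Cref{mfmc} then implies that $\mathcal{C}$ has $\Delta_3$ or $Q_6$ as a minor. But idealness is preserved under taking minors and $\Delta_3$ is non-ideal (as noted just before \Cref{q odd}), so the ideal clutter $\mathcal{C}$ has no $\Delta_3$ minor; therefore $\mathcal{C}\setminus I/J\cong Q_6$ for some disjoint $I,J\subseteq V$.

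The second step is to show this $Q_6$ minor cannot be proper. Identify $Q_6$ with the clutter of triangles of $K_4$ on its six edges: any two triangles share an edge, so $\nu(Q_6,\mathbf1)=1$; meanwhile a perfect matching such as $\{ab,cd\}$ covers all four triangles while no single edge does, so $\tau(Q_6,\mathbf1)=2$. In particular $Q_6$ does not pack. Were $Q_6$ a \emph{proper} minor of $\mathcal{C}$, the hypothesis that $\mathcal{C}$ is minimally non-packing would force $Q_6$ to pack, a contradiction. Hence $I\cup J=\emptyset$, so $\mathcal{C}\cong Q_6$, and therefore $\tau(\mathcal{C},\mathbf1)=\tau(Q_6,\mathbf1)=2$, which is \Cref{tau=2}.

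There is essentially no hard step once \Cref{mfmc} is in hand; the only matters deserving attention are invoking its excluded-minor form together with the elementary implication ``MFMC $\Rightarrow$ packs,'' and carrying out the routine computation $\tau(Q_6,\mathbf1)=2$ and $\nu(Q_6,\mathbf1)=1$. These same facts show in passing that $Q_6$ is the unique ideal minimally non-packing clutter arising as some $\mult(S)$, a statement sharper than \Cref{tau=2} itself; combined with the implication ``$\tau=2$ Conjecture $\Rightarrow$ Replication Conjecture'' of \cite{Cornuejols00}, it also re-derives \Cref{replication} for this class.
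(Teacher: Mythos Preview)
Your proof is correct and follows essentially the same approach as the paper's own proof: use \Cref{mfmc} to force a $\Delta_3$ or $Q_6$ minor, rule out $\Delta_3$ by idealness, and then argue that the $Q_6$ minor cannot be proper because $Q_6$ itself does not pack, forcing $\mult(S)\cong Q_6$ with $\tau=2$. The only differences are cosmetic---you spell out the computation $\tau(Q_6,\mathbf1)=2$, $\nu(Q_6,\mathbf1)=1$ and add the concluding remark about re-deriving \Cref{replication}.
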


 We will formally state the Replication Conjecture and the $\tau=2$ Conjecture along with the proofs of \Cref{replication} and \Cref{tau=2} in \Cref{sec:replication}.

\paragraph{Summary and organizations of the paper.} This paper provides a complete characterization of when the multipartite uniform clutter of a coordinate subspace is ideal and when it has the MFMC property. The proofs of our main results are based on applications of the theory of ideal clutters and matroid theory. Tools from ideal clutters and matroid theory are presented in \Cref{sec:cuboid} and \Cref{sec:matroid}, respectively.

The first result we prove in this paper is \Cref{mfmc} which characterizes the MFMC property of the multipartite uniform clutter of a vector space over $GF(q)$ for any prime power $q$. In fact, \Cref{q odd} for the idealness under an odd prime power $q$ shares much of the proof with \Cref{mfmc}. Hence, we prove the two theorems in \Cref{sec:mfmc-odd}.

For the idealness under the case of powers of 2, we need more techniques. In \Cref{sec:p=2-structure}, we provide some properties of the underlying matroid of a vector space over $GF(2^k)$ for $k\geq 2$. In \Cref{sec:p=2-At}, we develop some tools for understanding vector spaces of a certain structure that appear for the case of powers of 2. We divide our analysis of the case of powers of 2 into the $q=4$ case and the case of $q=2^k$ for $k\geq 3$. The $q=4$ case, \Cref{q=4}, is covered in \Cref{sec:q=4}. The other case, \Cref{q>4}, is presented in \Cref{sec:q>4}.

We conclude the paper by proving~\Cref{replication} and~\Cref{tau=2} on the Replication Conjecture and the $\tau=2$ Conjecture, respectively, for the class of multipartite clutters from coordinate subspaces in~\Cref{sec:replication}.

\section{Multipartite uniform clutters}\label{sec:cuboid}

In this section, we develop some useful tools for understanding when the multipartite uniform clutter of a vector space of a finite field is ideal. Let $V_1,\ldots,V_n$ be $n$ nonempty sets, and take a subset $S$ of $V_1\times\cdots\times V_n$. We would take $V_i=GF(q)$ for $i\in[n]$ for a vector space over $GF(q)$, but we may take arbitrary finite sets that do not necessarily have the same size. Then the multipartite uniform clutter of $S$, denoted $\mult(S)$, is defined as the clutter over ground set $V_1\cup\cdots\cup V_n$ whose members are $\{x_1,\ldots,x_n\}$ for $(x_1,\ldots,x_n)\in S$.
Here, $S$ need not be a vector space. When each $V_i$ has size two, $\mult(S)$ for $S\subseteq V_1\times\cdots\times V_n$ coincides with the {\it cuboid of $S$}, denoted $\cuboid(S)$~\cite{Abdi-idealmnp,Abdi-cuboid}. In that case, $V_1\times\cdots\times V_n$ is isomorphic to $\{0,1\}^n$, so cuboids correspond to vertex subsets of the $n$-dimensional 0,1 hypercube, and this is how the name cuboid is coined. Therefore, for a binary space $S$, we have that $\mult(S)=\cuboid(S)$. 

\begin{RE}\label{mult-equiv}
	Let $\mathcal{C}$ be a clutter, and let $V_1,\ldots,V_n$ be $n$ non-empty sets. Then the following statements are equivalent:
	\begin{enumerate}[(i)]
		\item $\mathcal{C}$ is isomorphic to $\mult(S)$ for some $S\subseteq V_1\times\cdots\times V_n$,
		\item the ground set of $\mathcal{C}$ can be partitioned into $V_1,\ldots,V_n$ so that for every $C\in\mathcal{C}$, $|C\cap V_i|=1$ for all $i\in[n]$.
	\end{enumerate}
\end{RE}

\Cref{mult-equiv} provides a different yet equivalent definition of multipartite uniform clutters. Now that we have seen \Cref{mult-equiv}, we know that the incidence matrix of a multipartite uniform clutter can be partitioned. To be more precise, notice that if a multipartite uniform clutter's ground set is partitioned into $n$ non-empty parts $V_1,\ldots,V_n$, then the columns of the member-element incidence matrix $M(\mathcal{C})$ of $\mathcal{C}$ can be partitioned into $n$ groups, corresponding to $V_1,\ldots, V_n$, so that a row has precisely one nonzero entry in each group. For instance,
$$M(Q_6)=\kbordermatrix{&0& 1 && 0  & 1 && 0 & 1 \\
	(0,0,0)&1& &\vrule&1& &\vrule&1& \\ 
	(0,1,1)&1& &\vrule& &1&\vrule& &1\\
	(1,0,1)& &1&\vrule&1& &\vrule& &1\\
	(1,1,0)& &1&\vrule& &1&\vrule&1&
}.$$
As mentioned in \Cref{sec:intro}, one can also view a multipartite uniform clutter with parts $V_1,\ldots, V_n$ as the clutter of hyperedges of an $n$-partite $n$-uniform hypergraph whose vertex set is partitioned into $V_1\cup\cdots\cup V_n$.

\paragraph{Isomorphism.} 

\begin{RE}\label{RE:relabel-elts}
	Take an integer $n\geq 1$ and a prime power $q$, and let $S\subseteq GF(q)^n$ be a vector space over $GF(q)$. Let $f_i:GF(q)\rightarrow GF(q)$ be a bijection for $i\in[n]$, and $g:GF(q)^n\rightarrow GF(q)^n$ be the bijection defined as
	\[
	g(x):=\left(f_1(x_1),\ldots,f_n(x_n)\right),\quad x\in GF(q)^{n}.
	\]
	Then $S\cong g(S)$ and $\mult(S)\cong \mult\left(g(S)\right)$.
\end{RE}

\paragraph{Products of set systems and clutters.}
Take two integers $n_1,n_2\geq 1$. Let $V_1,\ldots, V_{n_1}$ be $n_1$ nonempty sets, and let $S_1$ be a subset of $V_1\times\cdots\times V_{n_1}$. Let $U_1,\ldots,U_{n_2}$ be $n_2$ nonempty sets, and let $S_2$ be a subset of $U_1\times\cdots \times U_{n_2}$. Recall that the product of $S_1$ and $S_2$ is defined as
$
S_1\times S_2=\left\{(x_1,x_2):~x_1\in S_1,~x_2\in S_2\right\}.
$
We also define products of clutters. Let $\mathcal{C}_1,\mathcal{C}_2$ be two clutters over disjoint ground sets $E_1,E_2$. The {\it product of $\mathcal{C}_1$ and $\mathcal{C}_2$}, denoted $\mathcal{C}_1\times\mathcal{C}_2$, is defined as the clutter over ground set $E_1\cup E_2$ whose members are
$
\mathcal{C}_1\times\mathcal{C}_2=\left\{C_1\cup C_2:~C_1\in \mathcal{C}_1,~C_2\in\mathcal{C}_2\right\}.
$
Having defined the product of two clutters, we define the product of two multipartite uniform clutters $\mult(S_1)$ and $\mult(S_2)$. In fact, we can show the following:
\begin{LE}\label{set-product}
The following statements hold: \begin{enumerate}
\item $\mult(S_1)\times\mult(S_2)=\mult(S_1\times S_2)$.
\item Let $\mathcal{C}_1,\mathcal{C}_2$ be clutters over disjoint ground sets. If $\mathcal{C}_1,\mathcal{C}_2$ have the idealness (resp. MFMC) property, then so does $\mathcal{C}_1\times\mathcal{C}_2$.
\item Take two integers $n_1,n_2\geq 1$. Let $V_1,\ldots, V_{n_1}$ be $n_1$ nonempty sets, and let $S_1$ be a subset of $V_1\times\cdots\times V_{n_1}$. Let $U_1,\ldots,U_{n_2}$ be $n_2$ nonempty sets, and let $S_2$ be a subset of $U_1\times\cdots \times U_{n_2}$. If $\mult(S_1),\mult(S_2)$ are ideal, then so is $\mult(S_1\times S_2)$.
\end{enumerate}
\end{LE}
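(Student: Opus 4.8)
The plan is to establish the three parts in turn, with Part 3 being an immediate consequence of the first two. For Part 1, the identity $\mult(S_1)\times\mult(S_2)=\mult(S_1\times S_2)$ is just a matter of unwinding definitions once the ground sets are set up: arrange the sets $V_1,\ldots,V_{n_1},U_1,\ldots,U_{n_2}$ to be pairwise disjoint, and identify $(V_1\times\cdots\times V_{n_1})\times(U_1\times\cdots\times U_{n_2})$ with $V_1\times\cdots\times V_{n_1}\times U_1\times\cdots\times U_{n_2}$ in the obvious way, so that $\mult(S_1\times S_2)$ is a clutter over the same ground set $V_1\cup\cdots\cup V_{n_1}\cup U_1\cup\cdots\cup U_{n_2}$ as $\mult(S_1)\times\mult(S_2)$. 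A vector $(x,y)\in S_1\times S_2$ yields the set $\{x_1,\ldots,x_{n_1},y_1,\ldots,y_{n_2}\}$, which, because the $V_i$ and $U_j$ are pairwise disjoint, is precisely the union of the member $\{x_1,\ldots,x_{n_1}\}\in\mult(S_1)$ and the member $\{y_1,\ldots,y_{n_2}\}\in\mult(S_2)$; this correspondence is the identity on subsets of the ground set, so the two families of members coincide, and the fact that all members have size $n_1+n_2$ confirms that both objects are genuinely clutters. For Part 2 the crux is a description of the covers of a product: $B\subseteq E_1\cup E_2$ is a cover of $\mathcal{C}_1\times\mathcal{C}_2$ if and only if $B\cap E_1$ is a cover of $\mathcal{C}_1$ or $B\cap E_2$ is a cover of $\mathcal{C}_2$ (the one-line reason: $B$ fails to cover $\mathcal{C}_1\times\mathcal{C}_2$ exactly when some $C_1\in\mathcal{C}_1$ misses $B\cap E_1$ and some $C_2\in\mathcal{C}_2$ misses $B\cap E_2$). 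Two consequences will be used: (a) the minimal covers of $\mathcal{C}_1\times\mathcal{C}_2$ are exactly the minimal covers of $\mathcal{C}_1$ together with the minimal covers of $\mathcal{C}_2$, i.e.\ the blocker $b(\mathcal{C}_1\times\mathcal{C}_2)$ is obtained by placing the members of $b(\mathcal{C}_1)$ and of $b(\mathcal{C}_2)$ side by side on the disjoint ground sets $E_1$ and $E_2$; and (b) $\tau(\mathcal{C}_1\times\mathcal{C}_2,w)=\min\{\tau(\mathcal{C}_1,w|_{E_1}),\tau(\mathcal{C}_2,w|_{E_2})\}$ for every $w\in\mathbb{Z}_+^{E_1\cup E_2}$.

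For the idealness claim in Part 2 I would use (a). Since every member of $b(\mathcal{C}_1\times\mathcal{C}_2)$ is contained in $E_1$ or in $E_2$, the defining inequalities of the set covering polyhedron $Q(b(\mathcal{C}_1\times\mathcal{C}_2))$ split into those on the $E_1$-coordinates and those on the $E_2$-coordinates, so $Q(b(\mathcal{C}_1\times\mathcal{C}_2))=Q(b(\mathcal{C}_1))\times Q(b(\mathcal{C}_2))$ as a Cartesian product of polyhedra. The vertices of a Cartesian product of pointed polyhedra are exactly the pairs of vertices of the factors, so if $\mathcal{C}_1$ and $\mathcal{C}_2$ are ideal --- equivalently, by the classical fact that a clutter is ideal if and only if its blocker is, if $b(\mathcal{C}_1)$ and $b(\mathcal{C}_2)$ are ideal --- then $Q(b(\mathcal{C}_1\times\mathcal{C}_2))$ is integral, meaning $b(\mathcal{C}_1\times\mathcal{C}_2)$ is ideal, and hence so is its blocker $\mathcal{C}_1\times\mathcal{C}_2$. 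A self-contained alternative that avoids blocker duality: a short scaling argument shows that $\min\{w^\top x:x\in Q(\mathcal{C}_1\times\mathcal{C}_2)\}$ equals the minimum of the two component covering LP optima, which together with (b) yields idealness of the product directly.

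For the MFMC claim in Part 2, fix integral $w\geq\mathbf{0}$ and, by (b), assume without loss of generality that $\tau(\mathcal{C}_1,w|_{E_1})\leq\tau(\mathcal{C}_2,w|_{E_2})$, so that $k:=\tau(\mathcal{C}_1\times\mathcal{C}_2,w)=\tau(\mathcal{C}_1,w|_{E_1})$. Invoking the MFMC property of each $\mathcal{C}_i$, take optimal integral packings and list their members with multiplicity as $D_1,\ldots,D_k$ (members of $\mathcal{C}_1$) and $F_1,\ldots,F_m$ (members of $\mathcal{C}_2$, where $m=\tau(\mathcal{C}_2,w|_{E_2})\geq k$). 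Then $\{D_i\cup F_i:i\in[k]\}$ is a $w$-packing of $\mathcal{C}_1\times\mathcal{C}_2$ of size $k$: an element $v\in E_1$ lies in $D_i\cup F_i$ exactly when $v\in D_i$, hence appears at most as many times as it does in the $\mathcal{C}_1$-packing, i.e.\ at most $w_v$ times; symmetrically, an element $v\in E_2$ appears at most as many times as in a sub-multiset of the $\mathcal{C}_2$-packing, hence at most $w_v$ times. Therefore $\nu(\mathcal{C}_1\times\mathcal{C}_2,w)\geq k=\tau(\mathcal{C}_1\times\mathcal{C}_2,w)$, and since $\nu\leq\tau$ always, equality holds, proving the MFMC property for the product.

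Part 3 is then immediate: $\mult(S_1\times S_2)=\mult(S_1)\times\mult(S_2)$ by Part 1, the two factors are clutters over disjoint ground sets, and the idealness half of Part 2 applies. I expect Part 1 and the cover-structure observation to be routine; the step needing care is the MFMC packing combination, where one must check that the combined multiset respects $w$ on both sides simultaneously and that the longer of the two component packings really supplies at least $k$ members to pair up. (If one prefers, all of Part 2 can instead be quoted from known results on products/joins of clutters, but the argument above is short and self-contained.)
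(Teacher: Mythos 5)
Your proposal is correct. Part~1 is handled exactly as in the paper (a direct unwinding of the definitions, matching members $C_1\cup C_2$ with points $(x,y)\in S_1\times S_2$), and Part~3 is deduced from Parts~1 and~2 just as the paper does. The difference is in Part~2: the paper does not prove it at all, declaring it ``routine'' and citing \S5 of \cite{Abdi-cuboid}, whereas you supply a self-contained argument. Your idealness argument goes through the blocker: the observation that $B$ covers $\mathcal{C}_1\times\mathcal{C}_2$ iff $B\cap E_1$ covers $\mathcal{C}_1$ or $B\cap E_2$ covers $\mathcal{C}_2$ gives $b(\mathcal{C}_1\times\mathcal{C}_2)=b(\mathcal{C}_1)\cup b(\mathcal{C}_2)$ on disjoint ground sets, so $Q\bigl(b(\mathcal{C}_1\times\mathcal{C}_2)\bigr)$ is the Cartesian product $Q\bigl(b(\mathcal{C}_1)\bigr)\times Q\bigl(b(\mathcal{C}_2)\bigr)$, whose extreme points are pairs of extreme points; Lehman's theorem (idealness of a clutter is equivalent to idealness of its blocker) then transfers integrality back to the product clutter. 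Your MFMC argument, pairing the $k=\min\{\tau(\mathcal{C}_1,w|_{E_1}),\tau(\mathcal{C}_2,w|_{E_2})\}$ members of the shorter optimal packing with $k$ members of the longer one, is also sound: the combined multiset respects $w$ on $E_1$ because its $E_1$-traces form the $\mathcal{C}_1$-packing, and on $E_2$ because they form a sub-multiset of the $\mathcal{C}_2$-packing, so $\nu\geq\tau$ for the product. What your route buys is a complete proof not relying on the cited reference, at the cost of invoking blocker duality (Lehman's width-length theorem), which the stated lemma itself does not need if one is content to cite; the only pedantic caveat is the degenerate clutters $\{\}$ and $\{\emptyset\}$, where the cover characterization needs a separate (trivial) word, and which never arise in the paper's use of the lemma since $\mathbf{0}\in S_i$ guarantees nontrivial factors.
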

\begin{proof}
{\bf (1)}
Let $C_1\in\mult(S_1)$ and $C_2\in\mult (S_2)$. Then $C_1=\left\{x_1,\ldots, x_{n_1}\right\}$ for some $x=(x_1,\ldots,x_{n_1})\in S_1$ and $C_2=\left\{y_1,\ldots, y_{n_2}\right\}$ for some $y=(y_1,\ldots,y_{n_2})\in S_2$. Moreover, $(x,y)\in S_1\times S_2$ and $C_1\cup C_2\in \mult(S_1\times S_2)$. Similarly, we can show that if $C\in\mult(S_1\times S_2)$, then $C=C_1\cup C_2$ for some $C_1\in\mult(S_1)$ and $C_2\in\mult (S_2)$. Therefore, we obtain $\mult(S_1)\times\mult(S_2)=\mult(S_1\times S_2)$.
{\bf (2)} is routine and an argument can be found in~(\cite{Abdi-cuboid}, \S5).
{\bf (3)} follows from (1) and (2).
\end{proof}

So, if a set can be represented as the product of some smaller sets, we can check if its multipartite uniform clutter is ideal by studying the smaller sets and their multipartite uniform clutters. In particular, we will use this proposition to show implication {\bf (iii)}$\to${\bf (i)} in \Cref{q odd},  \Cref{q=4}, and \Cref{q>4}.

\paragraph{Projection and restriction of set systems.} Take an integer $n\geq 1$. Let $V_1,\ldots, V_{n}$ be $n$ nonempty sets, and let $S$ be a subset of $V_1\times\cdots\times V_{n}$. Given $J\subseteq[n]$ and $x\in S$, $x/J$ denote the subvector of $x$ that consists of the coordinates not in $J$. For $J\subseteq[n]$, the set obtained from $S$ after {\it dropping} the coordinates in $J$ is $\left\{x/J:~x\in S\right\}$. We will refer to a set obtained from $S$ after dropping some coordinates as a {\it projection} of $S$. Next, let $U_i$ be a nonempty subset of $V_i$ for $i\in[n]$. Here, $U_i$ need not be a proper subset of $V_i$. Then the set obtained from $S$ after {\it restricting} to $U_1\times\cdots\times U_n$ is what is obtained from $S\cap \left(U_1\times\cdots\times U_n\right)$ after dropping the coordinates where the points in $S\cap \left(U_1\times\cdots\times U_n\right)$ agree on. We will refer to a set obtained from $S$ after restricting $S$ to some subset of $V_1\times\cdots\times V_n$ as a {\it restriction} of $S$.

\begin{LE}\label{projection}
Let $S^\prime$ be a set that is either a projection or a restriction of $S$. Then $\mult(S^\prime)$ is a minor of $\mult(S)$.
\end{LE}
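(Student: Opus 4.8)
The plan is to treat the two cases separately—$S'$ a projection of $S$, and $S'$ a restriction of $S$—and to observe that the restriction case reduces to a deletion followed by a projection, so that only one substantive computation is really needed. Throughout, recall that the parts $V_1,\ldots,V_n$ are pairwise disjoint, so every member $\{x_1,\ldots,x_n\}$ of $\mult(S)$ has size exactly $n$; this uniformity is what makes the ``take inclusionwise minimal sets'' step in the definition of a minor vacuous in all the clutters we meet, and it is the small point on which everything hinges.

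For the projection case, suppose $S'=\{x/J:x\in S\}\subseteq\prod_{i\notin J}V_i$ for some $J\subseteq[n]$. I would show $\mult(S')=\mult(S)/J'$ where $J':=\bigcup_{i\in J}V_i$. Indeed, $\mult(S)/J'$ consists of the minimal sets among $\{C-J':C\in\mult(S)\}=\{\{x_i:i\in[n]\setminus J\}:x\in S\}$; since the $V_i$ are pairwise disjoint each such set has size exactly $n-|J|$, so this family is already an antichain and nothing is removed. Hence $\mult(S)/J'=\{\{x_i:i\notin J\}:x\in S\}$, which is exactly $\mult(S')$ (both clutters have ground set $\bigcup_{i\notin J}V_i$), exhibiting $\mult(S')$ as a contraction minor of $\mult(S)$.

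For the restriction case, let $U_i\subseteq V_i$ be nonempty, put $T:=S\cap(U_1\times\cdots\times U_n)$, let $K\subseteq[n]$ index the coordinates on which all vectors of $T$ agree, so $S'=\{x/K:x\in T\}$. First I would delete $I:=\bigcup_{i\in[n]}(V_i\setminus U_i)$ from $\mult(S)$: a member $\{x_1,\ldots,x_n\}$ avoids $I$ iff $x_i\in U_i$ for all $i$, i.e.\ iff $x\in T$, and again all members have the common size $n$, so $\mult(S)\setminus I=\mult(T)$ on ground set $U_1\cup\cdots\cup U_n$. (Here $\mult(T)$ may have elements in no member—those in $U_i$ used by no vector of $T$, in particular the coordinates in $K$—which is permitted and harmless.) Then I would apply the projection case to $T$ and $K$ to get $\mult(S')=\mult(T/K)=\mult(T)/\bigl(\bigcup_{i\in K}U_i\bigr)=(\mult(S)\setminus I)/\bigl(\bigcup_{i\in K}U_i\bigr)$; since $I$ and $\bigcup_{i\in K}U_i$ are disjoint (as $U_i\cap(V_j\setminus U_j)=\emptyset$), this displays $\mult(S')$ as a minor of $\mult(S)$.

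I do not expect a genuine obstacle here: the only thing requiring care is the uniformity observation above (so that minor operations act on member families simply by removing coordinates/parts), together with the mild bookkeeping point that a restriction can leave ground-set elements lying in no member—which is irrelevant for deletion and contraction. Everything else is matching up ground sets and member families.
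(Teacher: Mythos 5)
Your proof is correct and follows essentially the same route as the paper: a projection is realized by contracting the dropped parts, and a restriction by deleting the removed elements and then contracting the parts on which the surviving points agree. The extra care you take (noting that uniformity makes the minimality step vacuous, and reducing the restriction case to deletion plus the projection case) only makes explicit what the paper's shorter argument leaves implicit.
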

\begin{proof}
Suppose first that $S^\prime$ is a projection, say for some $J\subseteq[n]$, $S^\prime$ is obtained from $S$ after dropping the coordinates of $J$. Then $\mult(S^\prime)$ is the minor of $\mult(S)$ obtained after contracting the elements in $V_j$ for $j\in J$.

Suppose next that $S^\prime$ is a restriction, say for some nonempty subset $U_1\times\cdots\times U_n$ of $V_1\times\cdots\times V_n$, $S^\prime$ is obtained after restricting $S$ to $U_1\times\cdots\times U_n$. Then $\mult(S^\prime)$ is the minor of $\mult(S)$ obtained after deleting the elements in $(V_i\setminus U_i)$ for $i\in[n]$ and contracting the elements in $V_j$ for $j\in J$ where $J$ is the set of coordinates where the points in $S\cap \left(U_1\times\cdots\times U_n\right)$ agree on.
\end{proof}

\paragraph{Localizations.
} We mentioned before that a clutter is ideal if and only if every minor of it is ideal. In this section, we will define and study  
\emph{localizations} that appear as a minor of a multipartite uniform clutter.

\begin{DE} Given a multipartite uniform clutter $\mathcal{C}$ whose ground set is partitioned into non-empty parts $V_1,\ldots, V_n$, 
	a \emph{localization of $\mathcal{C}$}  is any minor obtained from $\mathcal{C}$ after contracting precisely one element from each $V_i$. \end{DE}

Thus, a localization of $\mathcal{C}$ is obtained after contracting $v_1,\ldots, v_n$ for some $v=(v_1,\ldots,v_n)\in V_1\times\cdots\times V_n$. As $\mathcal{C}=\mult(S)$ for some $S\subseteq V_1\times\cdots\times V_n$ by \Cref{mult-equiv}, the localization is equal to
\begin{align*}
\ind(S,v)&:=\mult(S)/\{v_1,\ldots,v_n\}\\
&=\left\{\text{the minimal sets of }\left\{\{x_1,\ldots,x_n\}-\{v_1,\ldots,v_n\}:\;(x_1,\ldots,x_n)\in S\right\}\right\}.
\end{align*}
We call $\ind(S,v)$ the {\it localization of $\mult(S)$ with respect to $v$}. So, every localization of $\mathcal{C}$ is equal to $\ind(S,v)$ for some $v$ and that $\ind(S,v)=\{\emptyset\}$ if $v\in S$.  In \cite{Abdi-cuboid}, localizations of a cuboid are referred to as \emph{induced clutters}.

It turns out that a multipartite uniform clutter is ideal if and only if all localizations are ideal; let us prove this in the remainder of this section. We say that a clutter is {\it minimally non-ideal} if it is non-ideal but every proper minor of it is ideal. We need the following lemma.

\begin{LE}\label{mni_no_partitionable_cols}
	Let $\mathcal{C}$ be a minimally non-ideal clutter, and let $V$ denote the ground set of $\mathcal{C}$. Then there is no subset $U$ of $V$ satisfying $|C\cap U|=1$ for every member $C$ of $\mathcal{C}$.
\end{LE}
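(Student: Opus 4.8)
The plan is to argue by contradiction. Suppose $\mathcal{C}$ is minimally non-ideal with ground set $V$, and suppose there exists $U\subseteq V$ with $|C\cap U|=1$ for every member $C\in\mathcal{C}$. I want to leverage the well-known structure theory of minimally non-ideal (mni) clutters due to Lehman: if $\mathcal{C}$ is mni, then it has a special "core" structure — in particular, $M(\mathcal{C})$ has a square nonsingular row submatrix $\overline{M}$ whose rows are the minimum-cardinality members, every column of $M(\mathcal{C})$ appears in $\overline{M}$, each such member has the same size $r\ge 2$, and (crucially) through each element of $V$ pass exactly $r$ of these minimum members, where $r\ge 2$. The cleanest contradiction uses this last point: if every minimum member meets $U$ exactly once, then counting incidences between $U$ and the minimum members, each element of $U$ lies in exactly $r$ minimum members while each minimum member contributes exactly $1$ to the count, forcing a divisibility/contradiction once we also recall that $\mathcal{C}$ has no member of size $\le 1$ (size-$1$ members would make $\mathcal{C}$ ideal trivially, so $r\ge 2$).

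More concretely, the key steps in order: (1) Recall/cite Lehman's theorem on the structure of mni clutters (this is standard and is surely invoked elsewhere in the paper or in \cite{Cornuejols01,Lehman90}); extract that the minimum-cardinality members all have a common size $r\ge 2$ and that every element of $V$ lies in exactly $r$ of them. (2) Observe that since $|C\cap U|=1$ for every $C\in\mathcal{C}$, in particular this holds for every minimum-cardinality member; hence $U$ is nonempty and meets every minimum member exactly once. (3) Double-count the set of pairs $(u,C)$ with $u\in U$, $C$ a minimum-cardinality member, $u\in C$: from the column side this is $r|U|$; from the row side it is exactly the number of minimum members (each contributing $1$). (4) But also double-count pairs $(v,C)$ with $v\in V$, $C$ minimum, $v\in C$: this equals $r|V|$ from the column side and $r\cdot(\#\text{minimum members})$ from the row side, so $\#\text{minimum members}=|V|$. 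Combining, $r|U|=|V|=\#\text{minimum members}$, yet also $r|U|$ should equal $\#\text{minimum members}$, which is consistent — so I'd instead push the argument through the membership structure inside a single minimum member: fix one minimum member $C_0$; it contains exactly one element $u_0$ of $U$, and the other $r-1$ elements lie in $V\setminus U$, but $V\setminus U$ also must be hit, and one checks this contradicts the "each element in exactly $r$ minimum members, total $|V|$ minimum members, $r\ge 2$" arithmetic together with nonsingularity of $\overline{M}$.

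Actually, the slickest route — and the one I would write up — avoids heavy counting: a set $U$ with $|C\cap U|=1$ for all members is exactly a "partition" of the clutter into the star at $U$ versus the rest; equivalently the all-ones row restricted to $U$, $\mathbf{1}_U$, satisfies $M(\mathcal{C})\mathbf{1}_U$ is a scalar (all-ones) vector indexed by members. This means $\frac{1}{r}\mathbf{1}_U$ is... no: rather, it means $\mathbf{1}_U$ is a cover of fractional value... Let me instead recall that an mni clutter $\mathcal{C}$ satisfies $\tau(\mathcal{C})\ge 2$ is false in general — but it does satisfy that the only fractional vertex of $Q(\mathcal{C})$ is $\frac{1}{r}\mathbf{1}$, where $r$ is the common size of the minimum members, and that this vertex must be tight at $|V|$ constraints. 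If $|C\cap U|=1$ for all $C$, then replacing coordinates in $U$ of $\frac1r\mathbf1$ by $\frac1r - \epsilon$ and coordinates outside $U$ by $\frac1r+\epsilon'$ (balanced so each member's sum is unchanged) keeps feasibility for a range of $\epsilon>0$, contradicting that $\frac1r\mathbf1$ is a vertex (it would lie in the interior of a segment of feasible points) — unless this perturbation violates nonnegativity immediately, which it does not for small $\epsilon$, OR unless $U=V$, but $U=V$ forces every member to be a singleton, making $\mathcal C$ ideal, contradiction.

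The main obstacle I anticipate: making the perturbation argument fully rigorous requires knowing that $\tfrac{1}{r}\mathbf{1}$ is genuinely a vertex (not just a feasible point) of $Q(\mathcal C)$ and identifying exactly which inequalities are tight at it, so that I can verify the perturbed point stays feasible; this is precisely the content of Lehman's theorem, so the real work is in invoking it correctly rather than in the perturbation itself. I would therefore structure the proof as: cite Lehman (common member size $r\ge2$, $\tfrac1r\mathbf1$ is the unique fractional vertex of $Q(\mathcal C)$, with the set of tight members being the minimum-cardinality ones), then run the balanced perturbation supported on $U$ versus $V\setminus U$ to contradict extremality, handling the degenerate case $U=V$ separately.
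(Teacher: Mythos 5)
Your plan hinges on Lehman's theorem, but the version you cite is not correct as stated, and this is a genuine gap: minimally non-ideal clutters include the degenerate projective planes $\Delta_n$ ($n\ge 3$), for which the unique fractional extreme point of $Q(\Delta_n)$ is $\bigl(\tfrac{n-2}{n-1},\tfrac{1}{n-1},\ldots,\tfrac{1}{n-1}\bigr)$, not $\tfrac{1}{r}\mathbf{1}$, and for which it is false that every element lies in exactly $r$ minimum members. The regular Lehman structure you invoke holds only for mni clutters \emph{other than} the $\Delta_n$'s, so as written your proof silently skips a whole family of cases; you would need to dispose of $\Delta_n$ separately (which is easy by direct inspection: no $U$ meets all of $\{1,2\},\ldots,\{1,n\},\{2,\ldots,n\}$ exactly once). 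A second, smaller defect: the perturbation ``balanced so each member's sum is unchanged'' is literally impossible whenever members have different cardinalities, which happens already in $\Delta_n$; what you actually need---and what your final sentence gestures at---is a direction $d=(r-1)\chi_U-\chi_{V\setminus U}$ that annihilates only the constraints \emph{tight} at the vertex (the minimum members, each meeting $U$ once, of common size $r\ge 2$), while the slack member constraints and nonnegativity tolerate a small $\pm\epsilon$; moving in \emph{both} directions then contradicts extremality. Your first, purely counting-based attempt you correctly abandoned yourself, as it yields no contradiction. With these repairs the Lehman-based perturbation does work, but it is substantially heavier than necessary.

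For comparison, the paper's proof avoids Lehman entirely and is uniform over all mni clutters. Minimality is used only to show that a fractional extreme point $x^*$ of $\{\mathbf{1}\ge x\ge\mathbf{0}:M(\mathcal{C})x\ge\mathbf{1}\}$ has $0<x^*_v<1$ for every $v$ (because $P(\mathcal{C}\setminus v)$ and $P(\mathcal{C}/v)$ are integral), so the constraints tight at $x^*$ include a nonsingular row submatrix $A$ of $M(\mathcal{C})$ with $Ax^*=\mathbf{1}$. If a set $U$ with $|C\cap U|=1$ for all members existed, then $M(\mathcal{C})\chi_U=\mathbf{1}$, hence $A\chi_U=\mathbf{1}$, and nonsingularity forces $x^*=\chi_U$, contradicting fractionality. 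Note that this uniqueness argument and your perturbation are two sides of the same linear-algebra fact: you try to exhibit a nonzero direction in the kernel of the tight subsystem, while the paper observes that the tight subsystem is nonsingular, so no such direction exists and the characteristic vector of $U$ would have to coincide with the fractional vertex.
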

\begin{proof}
	Since $\mathcal{C}$ is non-ideal, $P(\mathcal{C})=\{{\bf 1}\geq x\geq {\bf 0}:M(\mathcal{C})x\geq{\bf 1}\}$ has a fractional extreme point $x^*$. Let $v\in V$. Notice that $P(\mathcal{C}/v)$ and $P(\mathcal{C}\setminus v)$ are obtained from $P(\mathcal{C})\cap\{x:x_v=0\}$ and $P(\mathcal{C})\cap\{x:x_v=1\}$ after projecting out the variable $x_v$. As $\mathcal{C}/ v$ and $\mathcal{C}\setminus v$ are ideal, $P(\mathcal{C}/v)$ and $P(\mathcal{C}\setminus v)$ are integral. Then both $P(\mathcal{C})\cap\{x:x_v=0\}$ and $P(\mathcal{C})\cap\{x:x_v=1\}$ are integral, implying in turn that $x^*$ does not belong to any of these two. So, it follows that $0<x^*_v<1$ for each $v\in V$. Now, consider a nonsingular row submatrix $A$ of $M(\mathcal{C})$ such that $Ax^*={\bf 1}$. Suppose that $V$ has a subset $U$ such that $|C\cap U|=1$ for every member $C$ of $\mathcal{C}$. Let $\chi_U$ denote the characteristic vector of $U$ in $\{0,1\}^{V}$. Since $|C\cap U|=1$ for every member $C$ of $\mathcal{C}$, we have that $M(\mathcal{C})\chi_U={\bf 1}$ and thus $A\chi_U={\bf 1}$. Since $A$ is nonsingular, $Ax={\bf 1}$ has a unique solution, so it follows that $x^*=\chi_U$, a contradiction. Therefore, there is no such subset $U$ of $V$, as required.
\end{proof}

\begin{theorem}\label{idealness-local}
	A multipartite uniform clutter is ideal if and only if all of its localizations are ideal.
\end{theorem}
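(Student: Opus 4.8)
The plan is to prove the two directions separately; the forward implication is immediate, since if $\mathcal{C}$ is ideal then so is every minor of $\mathcal{C}$, and by definition every localization of $\mathcal{C}$ is a particular minor. So assume conversely that every localization of $\mathcal{C}=\mult(S)$ is ideal, where the ground set of $\mathcal{C}$ is partitioned into non-empty parts $V_1,\dots,V_n$, and suppose for contradiction that $\mathcal{C}$ is not ideal. Then $\mathcal{C}$ has a minor $\mathcal{C}'=\mathcal{C}\setminus I/J$ that is minimally non-ideal.

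The crux of the argument is the claim that $J\cap V_i\neq\emptyset$ for every $i\in[n]$. To prove it, fix $i$ and suppose instead $J\cap V_i=\emptyset$. If $V_i\subseteq I$, then since every member of $\mathcal{C}$ meets $V_i$ in exactly one element, every member of $\mathcal{C}$ meets $I$, so $\mathcal{C}'$ has no members; but a clutter with no members is ideal, contradicting minimal non-idealness. Hence $U:=V_i\setminus I$ is non-empty, and $U$ is contained in the ground set $V-(I\cup J)$ of $\mathcal{C}'$ because $J\cap V_i=\emptyset$. Any member $C'$ of $\mathcal{C}'$ equals $C-J$ for some $C\in\mathcal{C}$ with $C\cap I=\emptyset$; writing $C\cap V_i=\{x_i\}$ we have $x_i\notin I$ (as $C\cap I=\emptyset$) and $x_i\notin J$ (as $J\cap V_i=\emptyset$), so $C'\cap U=\{x_i\}$ has size one. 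Thus $U$ would be a subset of the ground set of the minimally non-ideal clutter $\mathcal{C}'$ meeting every member exactly once, contradicting \Cref{mni_no_partitionable_cols}. This proves the claim.

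To finish, for each $i\in[n]$ choose $v_i\in J\cap V_i$ and set $v:=(v_1,\dots,v_n)\in V_1\times\cdots\times V_n$. Since $\{v_1,\dots,v_n\}\subseteq J$ and since deletions and contractions over disjoint element sets commute, $\mathcal{C}'=\mathcal{C}/\{v_1,\dots,v_n\}\setminus I/(J\setminus\{v_1,\dots,v_n\})$, and $\mathcal{C}/\{v_1,\dots,v_n\}=\ind(S,v)$ is a localization of $\mathcal{C}$, so $\mathcal{C}'$ is a minor of $\ind(S,v)$. By hypothesis $\ind(S,v)$ is ideal, so its minor $\mathcal{C}'$ is ideal, contradicting non-idealness of $\mathcal{C}'$; hence $\mathcal{C}$ is ideal. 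The one genuine obstacle is the claim that $J$ meets every part: this is where \Cref{mni_no_partitionable_cols} enters, each part $V_i$ being a ``partitionable'' column set that a minimally non-ideal clutter cannot possess, so a minimally non-ideal minor must contract an element of every part (deleting a whole part only yields the ideal empty clutter). Everything else is routine bookkeeping with minor operations, including the degenerate empty-clutter case already handled above.
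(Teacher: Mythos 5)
Your proof is correct and follows essentially the same route as the paper's: both directions hinge on taking a minimally non-ideal minor $\mathcal{C}\setminus I/J$, invoking \Cref{mni_no_partitionable_cols} to force $J\cap V_i\neq\emptyset$ for every part, and concluding that the minimally non-ideal minor sits inside a localization. Your explicit handling of the degenerate case $V_i\subseteq I$ and the commutation of deletion/contraction are just bookkeeping details the paper leaves implicit, not a different argument.
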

\begin{proof}
	Let $\mathcal{C}$ be a multipartite uniform clutter whose ground set is partitioned into nonempty parts $V_1,\ldots,V_n$. $(\Rightarrow)$ If $\mathcal{C}$ is ideal, every minor of $\mathcal{C}$ is ideal, and so are all of its localizations. $(\Leftarrow)$ Assume that $\mathcal{C}$ is non-ideal. Then it has a minimally non-ideal minor $\mathcal{C}^\prime:=\mathcal{C}\setminus I/J$ obtained after deleting $I$ and contracting $J$ for some disjoint subsets $I,J\subseteq V_1\cup\cdots\cup V_n$. Observe that $\mathcal{C}\setminus I$ is another multipartite uniform clutter whose ground set is partitioned into nonempty parts $U_1,\ldots, U_n$ where $U_i:=V_i\setminus I$ for $i\in[n]$. In particular, every member $C$ of $\mathcal{C}\setminus I$ satisfies $|C\cap U_i|=1$ for $i\in[n]$. Suppose that $J\cap U_i=\emptyset$ for some $i\in[n]$. Then $|(C-J)\cap U_i|=|C\cap U_i|=1$ for every member $C$ of $\mathcal{C}\setminus I$. As $\mathcal{C}^\prime$ is obtained after contracting $J$ from $\mathcal{C}\setminus I$, we have $|C^\prime\cap U_i|=1$ for every member $C^\prime$ of $\mathcal{C}^\prime$. This contradicts \Cref{mni_no_partitionable_cols} due to our assumption that $\mathcal{C}^\prime$ is minimally non-ideal. Therefore, for each $i\in[n]$, $J\cap U_i\neq\emptyset$, so we have that $J\cap V_i\neq\emptyset$. Then, $\mathcal{C}^\prime$ is a minor of a localization. Therefore, one of $\mathcal{C}$'s localizations is non-ideal, as required.
\end{proof}
In contrast to idealness, even if all localizations have the MFMC property, a multipartite uniform clutter may not have the MFMC property. For example, all localizations of $Q_6=\mult(R_{1,1})$ are isomorphic to the clutter over ground set $\{1,2,3\}$ whose members are $\{1\},\{2\},\{3\}$. The clutter over 3 elements trivially has the MFMC property, but $Q_6$ does not~\cite{Lovasz72a,Seymour77}.

\section{Vector spaces, matroids, and sunflower bases}\label{sec:matroid}

Take a prime power $q$, and consider the Galois field $GF(q)$ of order $q$, with additive and multiplicative identities denoted as $0$ and $1$, respectively. 
Take an integer $n\geq 1$, and let $S\subseteq GF(q)^n$ be a vector space over $GF(q)$.

We assume basic knowledge of Matroid Theory throughout this paper (see~\cite{Oxley11} for reference) though we recall matroid theoretic notions as needed. Let us observe a routine way to associate a $GF(q)$-represented matroid to the vector space $S$. Let $A$ be a matrix over $n$ columns with entries in $GF(q)$ such that $S=\{x\in GF(q)^n:Ax={\bf 0}\}$, where the equality in the linear system $Ax={\bf 0}$ holds over $GF(q)$. The \emph{underlying matroid of $S$}, denoted $\mat(S)$, is the matroid represented by $A$ over $GF(q)$. The \emph{dimension} of vector space $S$ is defined as the maximum number of linearly independent vectors in $S$ over $GF(q)$. Note that 
$$\text{the dimension of $S$} = n - \rank(A) = n- \rank\left(\mat(S)\right)$$
where $\rank(A)$ is the matrix rank of $A$ over $GF(q)$ and $\rank\left(\mat(S)\right)$ is the matroid rank of $\mat(S)$ over $GF(q)$.
Although the representation matrix $A$ is not unique for vector space $S$, our terminology suggests that $\mat(S)$ is. The remark below justifies this. 

\begin{RE}\label{RE:supports-matroid}
Take a prime power $q$, and let $S$ be a vector space over $GF(q)$. Then the clutter of circuits of $\mat(S)$ is the set of inclusion-wise minimal members of $\{\supp(x):x\in S,x\neq {\bf 0}\}$.
\end{RE}

Given vectors $v^1,\ldots,v^r\in GF(q)^n$, let $\langle v^1,\ldots,v^r\rangle:=\left\{\sum_{i\in[r]}\lambda_i v^i:~\lambda_i\in GF(q)~~\text{for}~i\in[r]\right\}$, where addition is done over $GF(q)$. The set $\langle v^1,\ldots,v^r\rangle$, which we call the \emph{span} of the vectors, is a vector space over $GF(q)$. 
A \emph{basis} of a vector space $S$ is an inclusion-wise minimal set of vectors whose span is $S$. In this section, we characterize in terms of the underlying matroid when a vector space is spanned by a set of vectors of disjoint supports, or a set of vectors that form a sunflower.

\paragraph{Matroid minors.} We start by arguing that matroid deletions and contractions in $\mat(S)$ correspond to restrictions and projections in $S$. For a matroid $\mathcal{M}$ and disjoint subsets $I,J$ of the ground set of $\mathcal{M}$, we denote by $\mathcal{M}\setminus I/J$ the matroid minor of $\mathcal{M}$ obtained after deleting $I$ and contracting $J$. Let $\mathcal{C}(\mathcal{M})$ denote the clutter of circuits of $\mathcal{M}$. 

\begin{LE}\label{LE:matroid-minor}
Take an integer $n\geq 1$ and a prime power $q$, and let $S\subseteq GF(q)^n$ be a vector space over $GF(q)$. Then $\mat(S)\setminus I/J$ for some disjoint $I,J\subseteq[n]$ is precisely $\mat(S^\prime)$ where $S^\prime\subseteq GF(q)^{n-|I|-|J|}$ is the vector space over $GF(q)$ obtained from $S\cap\{x\in GF(q)^n:\;x_i=0\;\forall i\in I\}$ after dropping coordinates in $I\cup J$.
\end{LE}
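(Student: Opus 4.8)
The plan is to argue directly via representing matrices and the standard dictionary between a coordinate subspace and its annihilator. Fix a matrix $A$ over $GF(q)$ with column set $[n]$ such that $S=\{x\in GF(q)^n:Ax=\mathbf{0}\}$, so $\mat(S)$ is the column matroid $\mathcal{M}[A]$. The key observation is that $S$ is the (right) null space of $A$, equivalently the orthogonal complement of the row space $R$ of $A$; and $\mat(S)=\mathcal{M}[A]$ does not depend on which representing matrix (equivalently, which generating set of $R$) we pick. What I want to show is that the operation "intersect $S$ with the coordinate flat $\{x_i=0\ \forall i\in I\}$, then drop the coordinates in $I\cup J$" produces a vector space $S'$ whose representing matrix is (a matrix whose column matroid is) $A\setminus I/J$.

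First I would record the matroid side of the dictionary, which is classical: if $R\subseteq GF(q)^n$ is the row space of $A$, then deleting $I$ in $\mathcal{M}[A]$ corresponds to projecting $R$ onto the coordinates $[n]\setminus I$ (i.e. killing the coordinates in $I$), and contracting $J$ corresponds to restricting $R$ to the subspace where the $J$-coordinates vanish and then dropping those coordinates. Dually — and this is the crux — orthogonal complementation swaps these two operations: $(R|_{x_J=0})^\perp$, after dropping coordinates $J$, equals the projection of $R^\perp$ off $J$; and the projection of $R$ off $I$, after dropping $I$, has orthogonal complement equal to $R^\perp\cap\{x_I=0\}$ with coordinates $I$ dropped. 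Since $S=R^\perp$, applying the latter with the index set $I$ gives exactly "$S\cap\{x_i=0\ \forall i\in I\}$, drop $I$" $= (R|_{x_I=0}\text{ dropped})^\perp$; then applying the former to kill $J$ gives that dropping $J$ afterwards yields $(R|_{x_I=0}\text{ restricted to }x_J=0,\text{ then }I\cup J\text{ dropped})^\perp$. The inner object is precisely a representing matrix for $\mathcal{M}[A]\setminus I/J$ (delete $I$, contract $J$, in either order since $I,J$ are disjoint). Hence $S'$ is the annihilator of a representation of $\mathcal{M}[A]\setminus I/J$, so $\mat(S')=\mathcal{M}[A]\setminus I/J=\mat(S)\setminus I/J$, as claimed. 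One should double-check commutativity: deletion and contraction on disjoint sets commute for matroids, and the two linear operations (kill $I$-coordinates; intersect with $x_J=0$) on $R$ also commute since $I\cap J=\emptyset$, so the order in which we peel off $I$ and $J$ is immaterial.

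Alternatively — and this may be cleaner to write — I would sidestep the dual bookkeeping by appealing to \Cref{RE:supports-matroid}, which says $\mat(S)$ is determined by the inclusion-wise minimal nonzero supports of vectors in $S$. Then it suffices to check at the level of supports that circuits of $\mat(S)\setminus I/J$ match minimal supports in $S'$: a subset $C\subseteq[n]\setminus(I\cup J)$ is a circuit of $\mat(S)\setminus I/J$ iff $C\cup J$ contains a circuit of $\mat(S)\setminus I=\mat(S)|_{[n]\setminus I}$ whose intersection with $[n]\setminus J$ is exactly $C$ and is minimal such; translating via \Cref{RE:supports-matroid}, this says there is a nonzero $x\in S$ with $\supp(x)\cap I=\emptyset$ and $\supp(x)\setminus J=C$, minimally — which is exactly the condition that $C$ is a minimal nonzero support of a vector in $S'$ (the set obtained from $S\cap\{x_i=0\ \forall i\in I\}$ by dropping $I\cup J$). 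Care is needed because dropping the $J$-coordinates can make a previously-nonzero vector into $\mathbf{0}$ (when $\supp(x)\subseteq J$), matching the matroid fact that elements of $J$ that were loops-after-contraction disappear; but since we take inclusion-wise minimal nonzero supports on both sides, this is handled automatically.

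The main obstacle is getting the delete/contract versus restrict/project correspondence pointed in the right direction: it is genuinely easy to conflate "delete $\leftrightarrow$ restrict" with "delete $\leftrightarrow$ project," and the subtlety is that because $S$ is the \emph{null space} (dual object) rather than the row space, the roles of $I$ and $J$ swap relative to the naive guess — deleting column $i$ of $A$ (removing $i$ from $\mathcal{M}[A]$) corresponds to \emph{intersecting} $S$ with $\{x_i=0\}$ and dropping coordinate $i$, while contracting corresponds to the \emph{projection}. Once this is pinned down (e.g. by doing the rank-one case $I=\{i\}$, $J=\emptyset$ and $I=\emptyset$, $J=\{j\}$ by hand and then iterating), the rest is routine linear algebra. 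I would include the small sanity check that $\dim S' = (n-|I|-|J|) - \rank(\mat(S)\setminus I/J)$ is consistent with both descriptions, which also serves as a useful guardrail against an off-by-one or a swapped operation.
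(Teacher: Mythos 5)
Your main argument is correct but takes a genuinely different route from the paper. The paper proves the lemma entirely at the level of circuits: using \Cref{RE:supports-matroid} it shows that every circuit of $\mat(S)\setminus I/J$ contains a minimal nonzero support of $S'$ and vice versa (after disposing of the degenerate case where the minor has no circuits), and concludes the two circuit clutters coincide; this is elementary and self-contained, using nothing beyond the definition of a clutter minor of circuits. You instead work with representing matrices: deletion of $I$ in the column matroid $\mathcal{M}[A]$ corresponds to projecting the row space $R$ off $I$, contraction of $J$ to restricting $R$ to $\{x_J=\mathbf{0}\}$ and dropping $J$, and orthogonal complementation ($S=R^\perp$) swaps projection and restriction, which is exactly why deletion on the matroid side becomes ``intersect $S$ with $\{x_I=\mathbf{0}\}$ and drop $I$'' and contraction becomes ``drop $J$.'' That argument is sound — you correctly identify and resolve the one real trap, namely that the roles of deletion and contraction swap because $S$ is the null space rather than the row space — but it leans on standard facts you assert rather than prove (the representation of $\mathcal{M}[A]\setminus I/J$ via the row space, and $(R^\perp)^\perp=R$ over $GF(q)$, which needs a dimension count since the form is not definite); these are standard (cf.~\cite{Oxley11}) and the well-definedness of $\mat(S')$ independent of the chosen representation is supplied by \Cref{RE:supports-matroid}, so no genuine gap remains. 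Your alternative second argument, comparing minimal supports of $S'$ with circuits of the minor, is essentially the paper's own proof; the duality route buys a more conceptual, order-independent treatment of $I$ and $J$ at the cost of importing representation-theoretic facts, while the paper's support argument buys self-containedness.
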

\begin{proof}
	It is clear that $S^\prime$ is a vector space over $GF(q)$, so $\mat(S^\prime)$ is well-defined. To show that $\mat(S)\setminus I/J=\mat(S^\prime)$, we will argue that $\mathcal{C}\left(\mat(S)\setminus I/J\right)=\mathcal{C}\left( \mat(S^\prime)\right)$. 
	
	If $\mathcal{C}\left(\mat(S)\setminus I/J\right)=\emptyset$, then every $C\in \mathcal{C}\left(\mat(S)\right)$ intersects $I$, which means that $\supp(x)$ intersects $I$ for every $x\in S-\{\bf 0\}$. This implies that $S^{\prime}=\{\bf 0\}$, in which case $\mathcal{C}\left( \mat(S^\prime)\right)=\emptyset$. Thus we may assume that $\mathcal{C}\left(\mat(S)\setminus I/J\right)\neq \emptyset$. 
	
	Let $C_1\in \mathcal{C}\left(\mat(S)\setminus I/J\right)$. Then there exists $C\in \mathcal{C}\left(\mat(S)\right)$ such that $C\cap I=\emptyset$ and $C_1=C-J$. Then $C=\supp(x)$ for some $x\in S$ by the definition of $\mat(S)$ (see also~\Cref{RE:supports-matroid}). As $C\cap I=\emptyset$, it follows that $x_i=0$ for $i\in I$, which implies that there exists $x^\prime\in S^\prime-\{\mathbf{0}\}$ such that $\supp(x^\prime)=\supp(x)-J=C-J$. So, there exists $C_2\in \mathcal{C}\left( \mat(S^\prime)\right)$ such that $C_2\subseteq C_1$. 
	Therefore, every member of $\mathcal{C}\left(\mat(S)\setminus I/J\right)$ contains a member of $\mathcal{C}\left( \mat(S^\prime)\right)$. 
	
	Let $C_2\in \mathcal{C}\left( \mat(S^\prime)\right)$. Then $C_2=\supp(x^\prime)$ for some $x^\prime\in S^\prime$ by \Cref{RE:supports-matroid}. This implies that there is some $x\in S$ such that $x_i=0$ for $i\in I$ and $\supp(x)-J=\supp(x^\prime)$.
	Since $\supp(x)$ contains a circuit of $\mat(S)$ and $\supp(x)\cap I=\emptyset$, it follows that $C_2=\supp(x^\prime)$ contains a circuit of $\mat(S)\setminus I/J$. Therefore, 	we deduce that $\mathcal{C}\left(\mat(S)\setminus I/J\right)=\mathcal{C}\left( \mat(S^\prime)\right)$, as required.
\end{proof}

\paragraph{Direct sum.} 
Consider matroids $M_1,\ldots,M_\ell$ over pairwise disjoint ground sets $E_1,\ldots,E_\ell$ and independent set families $\mathcal{I}_1,\ldots,\mathcal{I}_\ell$, respectively. The {\it direct sum} of $M_1,\ldots,M_\ell$, denoted $M_1\oplus\cdots\oplus M_\ell$, is the matroid over ground set $E_1\cup\cdots\cup E_\ell$ whose independent set family is $\{I_1\cup\cdots \cup I_\ell: I_i\in \mathcal{I}_i, i\in[\ell]\}$. We shall need the following basic remark about the direct sum of matroids. For the remark, we need to recall two notions. First, a \emph{block} of a graph $G$ is any maximal vertex-induced subgraph of $G$ that is $2$-vertex-connected. Secondly, we denote the \emph{cycle matroid} of a graph $G$ by $\mat(G)$. Finally, we say that a vector space $S$ is the \emph{product} of vector spaces $S_1$ and $S_2$ if $S=\left\{(x,y):~x\in S_1,~y\in S_2\right\}=:S_1\times S_2$.

\begin{LE}\label{direct-sum}
The following statements hold: \begin{enumerate}
\item Let $G$ be a graph, and let $G_1,\ldots,G_k$ be the blocks of $G$. Then $\mat(G)=\mat(G_1)\oplus\cdots\oplus \mat(G_k)$.
\item Take a prime power $q$ and two $GF(q)$-representable matroids $M_1,M_2$ over disjoint ground sets. If $A_1$ and $A_2$ are $GF(q)$-representations of $M_1$ and $M_2$, respectively, then $M_1\oplus M_2$ can be represented by $\left(\begin{smallmatrix}
A_1&0\\
0&A_2
\end{smallmatrix} \right)$.
\item Take a prime power $q$ and a vector space $S$ over $GF(q)$. Then $S=S_1\times S_2$ for some vector spaces $S_1,S_2$ over $GF(q)$ if and only if $\mat(S)=\mat(S_1)\oplus \mat(S_2)$.
\end{enumerate}
\end{LE}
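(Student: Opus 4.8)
The plan is to prove the three parts in the order given, as each feeds the next.

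For \textbf{(1)} I would argue at the level of circuits. The circuits of $\mat(G)$ are exactly the edge sets of cycles of $G$, and the two classical facts about the block decomposition are that the edge sets $E(G_1),\dots,E(G_k)$ partition $E(G)$ and that every cycle of $G$, being $2$-connected, is contained in a single block. Hence the cycles of $G$ are the disjoint union over $i$ of the cycles of $G_i$, so $\mat(G)$ and $\mat(G_1)\oplus\cdots\oplus\mat(G_k)$ are matroids on the common ground set $E(G)$ with the same family of circuits, and are therefore equal.

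For \textbf{(2)}, write $A=\left(\begin{smallmatrix}A_1&0\\0&A_2\end{smallmatrix}\right)$ and index its columns by $E_1\cup E_2$, where $E_i$ indexes the columns of $A_i$. The point is that the top and bottom coordinate blocks of $A$ are ``independent'': given $X=X_1\cup X_2$ with $X_i\subseteq E_i$, a vanishing $GF(q)$-linear combination of the columns of $A$ indexed by $X$ splits --- reading the top block and then the bottom block --- into a vanishing combination of the $A_1$-columns indexed by $X_1$ together with a vanishing combination of the $A_2$-columns indexed by $X_2$ (the $X_2$-columns have zero top block and the $X_1$-columns zero bottom block). So $X$ is independent in $A$ if and only if $X_1$ is independent in $A_1$ and $X_2$ is independent in $A_2$; this is precisely the independence family of $M_1\oplus M_2$, so $A$ represents $M_1\oplus M_2$ over $GF(q)$.

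\textbf{(3)} is the substantive part. For the forward direction, if $S=S_1\times S_2$ with $S_i\subseteq GF(q)^{E_i}$ and $E_1\cup E_2=[n]$, pick matrices $A_i$ over $GF(q)$ with $S_i=\{x:A_ix=\mathbf 0\}$; then $S=\{z:\left(\begin{smallmatrix}A_1&0\\0&A_2\end{smallmatrix}\right)z=\mathbf 0\}$, so by definition $\mat(S)$ is the matroid represented by this block-diagonal matrix, which by \textbf{(2)} is $\mat(S_1)\oplus\mat(S_2)$. For the converse, suppose $\mat(S)=M_1\oplus M_2$ with $M_i$ a matroid on a ground set $E_i$, so that $E_1\cup E_2=[n]$. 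Since a vector space is not determined by its underlying matroid, I would not try to reconstruct the given $S_1,S_2$ from $M_1,M_2$, but instead extract factors from $S$ directly: let $S_i:=\{x|_{E_i}:x\in S,\ x_j=0 \text{ for all } j\notin E_i\}\subseteq GF(q)^{E_i}$. Applying \Cref{LE:matroid-minor} with $I=[n]\setminus E_i$ and $J=\emptyset$ identifies $\mat(S_i)$ with the restriction of $\mat(S)=M_1\oplus M_2$ to $E_i$, namely $M_i$; hence $\dim S_i=|E_i|-\rank(M_i)$. Identifying $S_1,S_2$ with the subspaces of $GF(q)^n$ supported on $E_1,E_2$, one checks directly (by adding a vector of $S$ supported on $E_1$ to one supported on $E_2$) that $S_1\times S_2\subseteq S$; and
\[
\dim(S_1\times S_2)=\dim S_1+\dim S_2=n-\big(\rank M_1+\rank M_2\big)=n-\rank(\mat(S))=\dim S,
\]
using additivity of rank under direct sum. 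Therefore $S=S_1\times S_2$ with $\mat(S_i)=M_i$, as required.

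The step I expect to demand the most care is the converse in \textbf{(3)}: one must be careful that it is \emph{deletion} in $\mat(S)$ (restriction to $E_i$), not contraction, that corresponds to the section $S_i$ --- \Cref{LE:matroid-minor} handles exactly this bookkeeping --- and then that the dimension count is what upgrades the inclusion $S_1\times S_2\subseteq S$ to equality. Parts \textbf{(1)} and \textbf{(2)} are standard and I would dispatch them quickly; the genuine content is transporting the matroid direct-sum structure back to a product decomposition of the vector space in \textbf{(3)}.
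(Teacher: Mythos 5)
Your proposal is correct, and it is considerably more complete than what the paper actually writes: the paper cites Oxley (Chapters 4.1 and 4.2) for parts (1) and (2) and asserts that (3) ``follows immediately from (2)''. Your arguments for (1) and (2) are the standard ones behind that citation. The genuine difference is in (3). The forward direction is indeed immediate from (2) --- represent each $S_i$ as the kernel of $A_i$; the block-diagonal matrix has kernel $S_1\times S_2$ and, by (2), represents $\mat(S_1)\oplus\mat(S_2)$ --- and this is presumably all the paper has in mind. For the converse, however, knowing that $\mat(S)$ equals a direct sum only says that $S$ and a product space have the same underlying matroid, and, as you rightly note, a vector space is not determined by its matroid; indeed, with the factors fixed in advance the converse is literally false (over $GF(3)$, $S=\{(x,2x,y,y):x,y\in GF(3)\}$ has the same matroid as $S_1\times S_2$ with $S_1=S_2=\{(x,x):x\in GF(3)\}$, yet $S\neq S_1\times S_2$). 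Your route --- defining the sections $S_i=\{x|_{E_i}:x\in S,\ x_j=0\ \forall j\notin E_i\}$, identifying $\mat(S_i)$ with the restriction $M_i$ via \Cref{LE:matroid-minor}, checking $S_1\times S_2\subseteq S$ by adding a vector supported on $E_1$ to one supported on $E_2$, and closing the gap with the additivity of rank and the dimension formula $\dim S=n-\rank(\mat(S))$ --- supplies exactly the argument the paper's ``immediately'' glosses over, and it proves the statement in the form in which it is actually used later (in \Cref{almost-disj-} and \Cref{LE:no U24 K4/e}): if the matroid decomposes, then $S$ decomposes into factors whose matroids are the summands. So your approach buys a self-contained and more careful proof, in particular of the only direction that has real content, while the paper buys brevity by outsourcing (1)--(2) and leaving that direction implicit.
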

\begin{proof}
{\bf (1), (2)}: See Chapters~4.1 and 4.2 of \cite{Oxley11}. 
{\bf (3)} follows immediately from (2).
\end{proof}

\paragraph{Disjoint supports basis.} 

\begin{LE}\label{disj-supp}
	Take an integer $n\geq 1$ and a prime power $q$, and let $S\subseteq GF(q)^n$ be a vector space over $GF(q)$. Then the following statements are equivalent:
	\begin{enumerate}[(i)]
		\item $\mat(S)=\mat(G)$ where every block of $G$ is either a bridge or a circuit,
		\item $S=\langle v^1,\ldots,v^r\rangle$ where $v^1,\ldots,v^r\in GF(q)^n$ have pairwise disjoint supports, 
		\item 
		$S=S_1\times \cdots \times S_k$ where each $S_i$ has dimension at most $1$.
	\end{enumerate}
\end{LE}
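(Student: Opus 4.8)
The plan is to prove the cycle of implications (i)\,$\Rightarrow$\,(iii)\,$\Rightarrow$\,(ii)\,$\Rightarrow$\,(i), translating freely between products of vector spaces and direct sums of matroids via \Cref{direct-sum}, and reading the circuits of $\mat(S)$ off the supports of vectors in $S$ via \Cref{RE:supports-matroid}. For \textbf{(i)\,$\Rightarrow$\,(iii)}, suppose $\mat(S)=\mat(G)$ with every block of $G$ a bridge or a circuit, and let $G_1,\dots,G_k$ be the blocks, so the edge-partition $E(G)=E(G_1)\sqcup\cdots\sqcup E(G_k)$ identifies with a partition $[n]=N_1\sqcup\cdots\sqcup N_k$. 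By \Cref{direct-sum}(1), $\mat(S)=\mat(G_1)\oplus\cdots\oplus\mat(G_k)$. Each $\mat(G_i)$, being the restriction of $\mat(S)$ to $N_i$, is $GF(q)$-representable, hence equals $\mat(S_i)$ for some vector space $S_i$ over $GF(q)$ on coordinates $N_i$; iterating \Cref{direct-sum}(3) (a short induction, using \Cref{direct-sum}(2) to keep the intermediate matroids representable) then yields $S=S_1\times\cdots\times S_k$ up to a permutation of coordinates. Since $\dim S_i=|N_i|-\rank(\mat(G_i))$, which is $0$ when $G_i$ is a bridge and $1$ when $G_i$ is a circuit, each $S_i$ has dimension at most $1$.

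For \textbf{(iii)\,$\Rightarrow$\,(ii)}, given $S=S_1\times\cdots\times S_k$ with $\dim S_i\le 1$, pick for each $i$ with $\dim S_i=1$ a nonzero $w^i\in S_i$ and let $v^i\in GF(q)^n$ agree with $w^i$ on the $i$-th coordinate block and vanish elsewhere; these have pairwise disjoint supports and, since the dimension-$0$ factors contribute only $\mathbf 0$, they span $S$. For \textbf{(ii)\,$\Rightarrow$\,(i)}, we may assume each $v^i\ne\mathbf 0$; as $\supp(v^1),\dots,\supp(v^r)$ are pairwise disjoint we get $\supp(\sum_i\lambda_i v^i)=\bigcup_{i:\lambda_i\ne 0}\supp(v^i)$ for all scalars $\lambda_i$ (there is no cancellation), so by \Cref{RE:supports-matroid} the circuits of $\mat(S)$ are exactly the pairwise disjoint sets $\supp(v^1),\dots,\supp(v^r)$. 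A matroid whose circuits are pairwise disjoint is the direct sum of the uniform matroids $U_{\,|\supp(v^i)|-1,\,|\supp(v^i)|}$ and coloops on the remaining coordinates (its connected components are precisely the circuits and the remaining singletons), and by \Cref{direct-sum}(1) this is the cycle matroid of the graph $G$ that is the disjoint union of circuits $C_{|\supp(v^i)|}$ together with single edges; thus $\mat(S)=\mat(G)$ and every block of $G$ is a bridge or a circuit.

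I do not expect a genuine obstacle: the lemma is essentially a dictionary between three ways of saying ``$\mat(S)$ is a direct sum of circuits and coloops''. The points that need care are the iteration of the two-factor statement \Cref{direct-sum}(3) up to $k$ factors, matching the block/edge partition of $G$ with the coordinate partition of $[n]$, and the two degenerate cases: a dimension-$0$ factor $S_i$ contributes a forest matroid (all blocks bridges), and a singleton support $\supp(v^i)$ contributes a loop, which should be read as a circuit of length one.
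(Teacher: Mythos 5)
Your proof is correct and is essentially the same dictionary the paper uses: the paper checks (ii)$\Leftrightarrow$(iii) directly and obtains (i)$\Leftrightarrow$(iii) from the blockwise correspondence (dimension $0$ $\leftrightarrow$ bridge, dimension $1$ $\leftrightarrow$ circuit) together with \Cref{direct-sum} and \Cref{RE:supports-matroid}, which is exactly the content of your three implications. The only phrasing to tighten is in (i)$\Rightarrow$(iii): \Cref{direct-sum}(3) yields that $S$ is a product of \emph{some} vector spaces on the coordinate sets of the blocks (not of pre-chosen $S_i$ with $\mat(S_i)=\mat(G_i)$, since $\mat$ does not determine the space), but as any such factor has dimension $|N_i|-\rank\left(\mat(G_i)\right)\leq 1$, the conclusion stands.
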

\begin{proof}
It can be readily checked that {\bf (ii)} and {\bf (iii)} are equivalent.
The equivalence of {\bf (i)} and {\bf (iii)} follows from the fact that for a vector space $T$ over $GF(q)$, $T=\{0\}$ if and only if $\mat(T)$ is the cycle matroid of a bridge, and $T$ has dimension $1$ if and only if $\mat(T)$ is the cycle matroid of a circuit.
\end{proof}

\paragraph{Sunflower basis.} 
Given an integer $t\geq 3$, denote by $A_t$ the graph that consists of two vertices and $t$ parallel edges connecting them.

\begin{LE}\label{RE:matroid-block-2}
	Take an integer $n\geq 1$ and a prime power $q$, and let $T\subseteq GF(q)^n$ be a vector space over $GF(q)$. Then $\mat(T)$ is the cycle matroid of a subdivision of $A_t$ for some $t\geq 3$ if and only if $T$ is generated by a sunflower basis.
\end{LE}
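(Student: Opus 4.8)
The plan is to prove both directions by translating between the geometric picture (a sunflower basis of $T$) and the topological picture (a subdivision of $A_t$). The key observation linking them is \Cref{RE:supports-matroid}: the circuits of $\mat(T)$ are exactly the inclusion-wise minimal supports of nonzero vectors in $T$. So I want to understand what the circuit structure of $\mat(T)$ looks like in each of the two cases, and match them up.

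For the direction ``sunflower basis $\Rightarrow$ cycle matroid of a subdivision of $A_t$'': suppose $T=\langle v^1,\dots,v^r\rangle$ where, after permuting coordinates, the $v^i$ have the block form displayed before \Cref{q=4}, with a common part $u^0$ (on a coordinate block $B_0$) and private parts $u^i$ (on disjoint coordinate blocks $B_i$), all with nonzero entries. First I would check that $v^1,\dots,v^r$ are linearly independent, so $\dim T=r$, and that the support of a generic linear combination $\sum_i\lambda_i v^i$ is: $B_0$ whenever $\sum_i\lambda_i\neq 0$ together with all $B_i$ for which $\lambda_i\neq 0$; and if $\sum_i\lambda_i=0$ (but not all $\lambda_i$ zero), just $\bigcup_{i:\lambda_i\neq 0}B_i$. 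From this I read off the minimal supports: within each $B_j$ ($0\le j\le r$) every single coordinate is a loop-free element but the whole block $B_j$ collapses to a single parallel class (since any two coordinates in the same $B_j$ have proportional columns — here I use that $u^j$ has all entries nonzero, so the restriction of $\mat(T)$ to $B_j$ is a circuit). Contracting down each parallel class, the simplification of $\mat(T)$ is the cycle matroid of $A_{r+1}$: take $r+1$ parallel edges, one per block $B_0,\dots,B_r$; the minimal circuits ``$B_0\cup B_i$'' correspond to the $2$-circuits (pairs of parallel edges sharing the common class), and ``$B_i\cup B_j$ for $i,j\ge 1$'' — wait, those are not minimal, since $B_i$ alone with $\lambda$'s making $\sum\lambda=0$ is not possible with only two nonzero $\lambda_i,\lambda_j$ unless $\lambda_i+\lambda_j=0$, which is fine over $GF(q)$, so $B_i\cup B_j$ \emph{is} a circuit. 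Hence every pair of the $r+1$ parallel edges forms a $2$-circuit, i.e.\ the simplification is $U_{2,r+1}$'s graphic model $A_{r+1}$. Undoing the simplification (each element of $\mat(T)$ in block $B_j$ becomes a subdivision vertex along the $j$-th edge), we get precisely a subdivision of $A_{r+1}$, with $t=r+1\ge 3$ since $r\ge 2$ (a sunflower has $r\ge 2$ by definition).

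For the converse, suppose $\mat(T)=\mat(G)$ where $G$ is a subdivision of $A_t$, $t\ge 3$. Then $G$ has $t$ internally disjoint paths $P_1,\dots,P_t$ joining two branch vertices, and the edge set of $G$ partitions into $E(P_1),\dots,E(P_t)$; since $\mat(G)$ depends only on the cycle structure, I may assume $G$ has exactly two vertices of degree $\ge 3$ and the rest degree $2$. Now pull this back through a $GF(q)$-representation: contracting all but one edge of each path $P_j$ shows that the columns of the representation matrix indexed by $E(P_j)$ are all scalar multiples of one nonzero vector $w^j$ (a parallel class), and the only circuits are the pairs $E(P_i)\cup E(P_j)$ and their supersets, matching $A_t$. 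I then need to build the sunflower basis explicitly: after row operations on the representation matrix of $\mat(T)$ I can arrange each block $E(P_j)$ to carry a ``rank-one" strip of nonzero entries, and I can choose a spanning set of $T=\{x:Ax=0\}$ supported as follows — pick one path, say $P_t$, to play the role of the ``center'': for $j=1,\dots,t-1$ take the (essentially unique up to scaling) vector $v^j\in T$ whose support is $E(P_j)\cup E(P_t)$, which exists because $E(P_j)\cup E(P_t)$ is a circuit of $\mat(T)$ (so the corresponding columns of $A$ are minimally dependent, giving a nonzero null vector supported exactly there, with all entries nonzero by minimality). Setting $r=t-1\ge 2$ and letting $u^0$ be the $P_t$-part, $u^j$ the $P_j$-part, the $v^j$ are in sunflower form, they are linearly independent (their private parts $u^j$ are on disjoint nonempty blocks), and they span a subspace of the right dimension $r = t-1 = n - \rank(\mat(T))$ — wait, I should double-check this rank count: $\rank\mat(G)=|V(G)|-(\#\text{components})$, and $T$ has dimension $n-\rank\mat(T)$; for a subdivision of $A_t$ with $t$ paths one computes $\dim T = t-1$, which matches $r$. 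Hence $\{v^1,\dots,v^{t-1}\}$ is a sunflower basis of $T$.

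The main obstacle will be the converse direction — specifically, producing an \emph{honest basis} in the displayed block form rather than merely matching circuit structures. Matching the abstract matroids is routine via \Cref{RE:supports-matroid} and parallel-class arguments; the subtlety is that "the vector supported on the circuit $E(P_j)\cup E(P_t)$" is only unique up to scaling \emph{per coordinate block} after we've normalized the representation, so I must be careful that the same representative of $u^0$ (the $P_t$-part) is shared across all $v^1,\dots,v^{t-1}$ simultaneously. This requires first fixing a representation $A$ of $\mat(T)$ in which the $P_t$-columns are literally scalar copies of a single column, then reading off the null vectors — at which point the shared center $u^0$ falls out automatically. I would isolate this normalization as the one genuinely non-bookkeeping step, invoking \Cref{LE:matroid-minor} to justify passing between $T$ and its relevant restrictions/projections as needed.
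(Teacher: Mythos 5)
Your first direction (sunflower basis $\Rightarrow$ subdivision of $A_t$) is essentially sound: computing the support of $\sum_i\lambda_i v^i$ and reading off that the circuits of $\mat(T)$ are exactly the unions $B_i\cup B_j$ of two coordinate blocks identifies $\mat(T)$, via \Cref{RE:supports-matroid}, as the cycle matroid of the graph with two vertices joined by $r+1$ internally disjoint paths whose edge sets are $B_0,\ldots,B_r$. But your justification there contains a false claim: the coordinates inside a block $B_j$ are in \emph{series} in $\mat(T)$ (every circuit meeting $B_j$ contains all of it), not parallel; two coordinates of $B_j$ do \emph{not} have proportional columns in a representation of $\mat(T)$, since that would mean a two-element circuit inside $B_j$, i.e.\ a vector of $T$ supported on two coordinates of $B_j$, which does not exist. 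The operation relating $\mat(T)$ to $\mat(A_{r+1})$ is a series extension (subdivision), not a simplification of parallel classes. Since your circuit computation is correct and suffices, this direction survives the slip; the paper's own proof of this direction instead shows the elements of each block are in series, settles the singleton case as $\mat(A_t)$, and concludes by series extension.

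The genuine gap is in the converse, at precisely the step you flagged: forcing a \emph{common} center $u^0$ across $v^1,\ldots,v^{t-1}$. Your proposed fix --- ``fix a representation $A$ of $\mat(T)$ in which the $P_t$-columns are literally scalar copies of a single column'' --- is impossible whenever $|E(P_t)|\geq 2$: proportional columns of $A$ mean the corresponding elements are parallel in $\mat(T)$, i.e.\ they form a two-element circuit, whereas two distinct edges of the subdivided path $P_t$ never form a cycle of $G$ (they are in series). It is the same parallel/series confusion as above, and here it sinks the argument: no normalization of $A$ makes the shared center ``fall out.'' The missing step is short and is what the paper does: having chosen $v^1,\ldots,v^{t-1}$ with supports $E(P_t)\cup E(P_j)$ and verified they form a basis by the dimension count, use that $E(P_1)\cup E(P_2)$ is also a circuit, hence the support of some $v\in T$; writing $v=\sum_j\mu_j v^j$, the blocks $E(P_j)$ for $j\geq 3$ force $\mu_j=0$, the blocks $E(P_1),E(P_2)$ force $\mu_1,\mu_2\neq 0$, and vanishing on $E(P_t)$ forces $\mu_1 u^0_1+\mu_2 u^0_2=\mathbf{0}$, so the two centers are proportional; repeating with the other petals makes all centers proportional to $u^0_1$, and rescaling each $v^j$ produces the sunflower form. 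Without this (or an equivalent) argument, your converse is incomplete.
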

\begin{proof}
{\bf $(\Rightarrow)$}: Assume that $\mat(T)=\mat(G)$ where $G$ is a subdivision of $A_t$ for some $t\geq 3$. Notice that $G$ consists of two vertices and $t$ internally vertex-disjoint paths connecting them. Let $P_0,\ldots,P_{t-1}$ denote the paths, and let $E(P_0),\ldots,E(P_{t-1})$ denote their edge sets. Then it follows from \Cref{RE:supports-matroid} that $T$ contains a point whose support is $E(P_0)\cup E(P_i)$. Therefore, $T$ contains $t-1$ points $v^1,\ldots,v^{t-1}$ (in row vectors) of the following form: 
\[
\begin{matrix}
v^1\\
v^2\\
\vdots\\
v^{t-1}
\end{matrix}
\left[
\begin{array}{c|c|c|c|c}
u_1^0 & u^1 & \mathbf{0} & \cdots & \mathbf{0}\\
u_2^0 & \mathbf{0} & u^2 & \cdots & \mathbf{0}\\
\vdots    & \vdots    &   \vdots  & \vdots & \vdots\\
u_{t-1}^0 & \mathbf{0} & \mathbf{0} & \cdots & u^{t-1}
\end{array}
\right]
\]
where $u_1^0,\ldots,u_{t-1}^0\in GF(q)^{|E(P_0)|}$ and $u^i\in GF(q)^{|E(P_i)|}$ for $i\in[n]$ are vectors of nonzero entries. As $T$ is a vector space in $GF(q)^n$, $\mat(T)$ is over $n$ edges, and therefore, $G$ has $n$ edges. Since $G$ is a subdivision of $A_t$, a spanning tree of $G$ has $n-(t-1)$ edges, which means that $\mat(T)=\mat(G)$ has rank $n-(t-1)$. Then the dimension of $T$ is $n-\mat(T) = t-1$, so we have $T=\langle v^1,\ldots,v^{t-1}\rangle$. Now, let us argue that we may assume that $u_1^0=\cdots=u_{t-1}^0$ without loss of generality. As $P_1\cup P_2$ is a circuit of $G$, \Cref{RE:supports-matroid} implies that there is a point $v\in T$ whose support is $E(P_1)\cup E(P_2)$. Then $v$ can be written as $v=\mu_1v^1+\mu_2v^2$ for some $\mu_1,\mu_2\in GF(q)-\{0\}$. As the support of $v$ is $E(P_1)\cup E(P_2)$, we have that $\mu_1 u_1^0+\mu_2 u_2^0=0$, which implies that $u_2^0=\lambda_2 u_1^0$ for some nonzero $\lambda_2$. Similarly, we obtain $u_i^0=\lambda_i u_1^0$ for some nonzero $\lambda_i$ for $i\in[t-1]$, as required. Therefore, after scaling $v^i$'s if necessary, we may assume that $u_1^0=\cdots=u_{t-1}^0$, as required.

{\bf $(\Leftarrow)$}: Suppose $T=\langle v^1,\ldots, v^{t-1}\rangle$ where $v^1,\ldots, v^{t-1}\in GF(q)^n$ are vectors of the following form (in row vectors), after permuting the coordinates, for some $t\geq 3$:
	\[
	\begin{matrix}
	v^1\\
	v^2\\
	\vdots\\
	v^{t-1}
	\end{matrix}
	\left[
	\begin{array}{c|c|c|c|c}
	u^0 & u^1 & \mathbf{0} & \cdots & \mathbf{0}\\
	u^0 & \mathbf{0} & u^2 & \cdots & \mathbf{0}\\
	\vdots    & \vdots    &   \vdots  & \ddots & \vdots\\
	u^0 & \mathbf{0} & \mathbf{0} & \cdots & u^{t-1}
	\end{array}
	\right]
	\]
	for some row vectors $u^0,u^1\ldots, u^{t-1}$ with no zero entries. 
Let $E_i$ be the support of $u^i$ for $i=0,1,\ldots,t-1$. Let $C$ be a circuit of $\mat(T)$. Then $C=\supp(x)$ for some $x\in T$. Let $x=\sum_{i=1}^{t-1} \mu_i v^i$. Then $x$ is of the form
	\[
\begin{matrix}
	x
\end{matrix}
\left[
\begin{array}{c|c|c|c|c}
	\sum_{i=1}^{t-1}\mu_iu^0 & \mu_1 u^1 & \mu_2 u^2 & \cdots & \mu_{t-1} u^{t-1}
\end{array}
\right]
\]
If $C\cap E_0\neq\emptyset$, then it means $\sum_{i=1}^{t-1}\mu_i\neq 0$, and therefore, $C\cap E_0 = E_0$. This implies that the elements in $E_0$ are in series. If $C\cap E_i\neq \emptyset$ for some $1\leq i\leq t-1$, then $\mu_i\neq 0$. This indicates that $C\cap E_i=E_i$, implying in turn that the elements in $E_i$ are in series.

Then consider the case where each $u^i$ is 1-dimensional, under which we have $E_i=\{e_i\}$ is a singleton for $i=0,\ldots, t-1$. Observe that $|\supp(x)|\geq 2$ for any $x\in T$. Then none of $\{e_0\},\{e_1\},\ldots, \{e_{t-1}\}$ is a circuit. However, we know that $\{e_0,e_i\}$ for $i=1,\ldots, t-1$ are circuits of $\mat(T)$ because $v^1,\ldots, v^{t-1}\in T$,  Moreover, $v^i + (q-1)v^j$ for $i\neq j$ has support $\{e_i, e_j\}$, and therefore, $\{e_i,e_j\}$ for distinct $i,j\in \{1,\ldots,t-1\}$ are all circuits. Then $\left\{\{e_i,e_j\}: i,j\in\{0,1,\ldots,t-1\},  i\neq j\right\}$ is the family of circuits of $\mat(T)$ because any subset of the ground set of size at least 3 would contain $\{e_i,e_j\}$ for some $i\neq j$. Therefore, $\mat(T)$ is $\mat(A_t)$. 

In general, as the elements of each $E_i$ are in series, $\mat(T)$ is a series extension of  $\mat(A_t)$, which is equivalent to the cycle matroid of a subdivision of $A_t$, as required.
\end{proof}

\paragraph{Putting it altogether.}

\begin{CO}\label{almost-disj-}
	Take an integer $n\geq 1$ and a prime power $q$, and let $S\subseteq GF(q)^n$ be a vector space over $GF(q)$. Then the following statements are equivalent:
	\begin{enumerate}[(i)]
		\item $\mat(S)=\mat(G)$ where every block of $G$ is a bridge, a circuit, or a subdivision of $A_t$ for some $t\geq 3$,
		\item $S=S_1\times\cdots\times S_k$ where each $S_i$ has dimension at most $1$, or admits a sunflower basis.\end{enumerate}
\end{CO}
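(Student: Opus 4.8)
The plan is to read the corollary off from the block‑decomposition facts of \Cref{direct-sum} together with the two structural dictionaries established above, namely \Cref{disj-supp} for the ``dimension at most $1$'' pieces and \Cref{RE:matroid-block-2} for the sunflower pieces. The organizing principle is that decomposing a graph into its blocks corresponds, on the matroid side, to a direct‑sum decomposition and, on the vector‑space side, to a product decomposition.

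For {\bf (i)}$\to${\bf (ii)}: if $\mat(S)=\mat(G)$ and $G$ has blocks $G_1,\dots,G_k$, then $\mat(S)=\mat(G_1)\oplus\cdots\oplus\mat(G_k)$ by \Cref{direct-sum}(1), and iterating \Cref{direct-sum}(3) this lifts to a product $S=S_1\times\cdots\times S_k$ with $\mat(S_i)=\mat(G_i)$ for each $i$. Since each $G_i$ is a bridge, a circuit, or a subdivision of some $A_t$, the corresponding factor $S_i$ is, respectively, $\{\mathbf{0}\}$, one‑dimensional, or a space with a sunflower basis --- the first two by the correspondence recorded in the proof of \Cref{disj-supp} and the last by \Cref{RE:matroid-block-2}. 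In every case $S_i$ has dimension at most $1$ or admits a sunflower basis, giving {\bf (ii)}.

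For {\bf (ii)}$\to${\bf (i)}: given $S=S_1\times\cdots\times S_k$ with each $S_i$ of dimension at most $1$ or with a sunflower basis, \Cref{direct-sum}(3) yields $\mat(S)=\mat(S_1)\oplus\cdots\oplus\mat(S_k)$. For each $i$ pick a graph $H_i$ with $\mat(S_i)=\mat(H_i)$ whose blocks are all of the allowed types: \Cref{disj-supp} supplies such an $H_i$ (blocks bridges or circuits) when $\dim S_i\le 1$, and \Cref{RE:matroid-block-2} supplies $H_i$ as a subdivision of some $A_{t_i}$, $t_i\ge 3$, when $S_i$ has a sunflower basis. Form $G$ as the disjoint union $H_1\sqcup\cdots\sqcup H_k$ (equivalently, glue the $H_i$ at cut‑vertices). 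Then \Cref{direct-sum}(1) gives $\mat(G)=\mat(H_1)\oplus\cdots\oplus\mat(H_k)=\mat(S)$, and since each $H_i$ is either a subdivision of $A_{t_i}$ (which is $2$‑connected, hence a single block) or a graph whose blocks are bridges and circuits, the blocks of $G$ are exactly the blocks of the $H_i$ --- each a bridge, a circuit, or a subdivision of $A_t$. This gives {\bf (i)}.

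I do not anticipate a genuine obstacle here: as the heading ``Putting it altogether'' indicates, the statement is a synthesis of results already proved. The two mild points requiring care are the iterated use of \Cref{direct-sum}(3) to translate a $k$‑fold matroid direct sum into a $k$‑fold vector‑space product, and the elementary observation that bridges, circuits, and subdivisions of $A_t$ are each single blocks, so the $H_i$ can be freely recombined into one graph in the converse direction.
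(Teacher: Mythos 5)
Your proposal is correct and matches the paper's intent exactly: the paper states \Cref{almost-disj-} without proof as an immediate synthesis of \Cref{direct-sum}, \Cref{disj-supp}, and \Cref{RE:matroid-block-2}, which is precisely the block-decomposition/direct-sum/product argument you spell out in both directions. The only formality worth noting is that in {\bf (i)}$\to${\bf (ii)} the lift from $\mat(S)=\mat(G_1)\oplus\cdots\oplus\mat(G_k)$ to $S=S_1\times\cdots\times S_k$ uses the representation statement \Cref{direct-sum}(2) (or a dimension count on coordinate restrictions) rather than \Cref{direct-sum}(3) verbatim, since the summands there are cycle matroids of graphs rather than matroids already given as $\mat(S_i)$.
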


\section{The MFMC property and odd prime powers}\label{sec:mfmc-odd}

Let $q$ be a power of a prime number $p$. Recall that we denote by $0$ and $1$ the additive and multiplicative identities of $GF(q)$. Then there must exist an integer $\ell$ such that $a+a+\cdots+a$ ($\ell$ times) equals $0$ for all $a\in GF(q)$, and in fact, the smallest of such integers is $p$. Here, $p$ is often referred to as the {\it characteristic} of $GF(q)$. Throughout this paper, we denote by $-v$ and $v^{-1}$ the additive and multiplicative inverses of $v$ for each $v\in GF(q)-\{0\}$. 

In this section, we prove three lemmas that are useful for this section as well as the next three. Then we prove \Cref{mfmc} that characterizes when the multipartite uniform clutter of a vector space has the MFMC property. Lastly, we prove \Cref{q odd} for the case when $q$ is an odd prime power.

\begin{LE}\label{q-space:triple}
	Take an integer $n\geq 3$ and $n$ non-empty sets  $V_1,\ldots, V_n$, and let $S\subseteq V_1\times\cdots\times V_n$. If $\mult(S)$ contains no $\Delta_3$ as a minor, then for any distinct $a,b,c\in S$ and distinct $i,j,k\in[n]$ such that
	\begin{equation}\label{triple-condition}
	a_i=b_i\neq c_i,\quad b_j=c_j\neq a_j,\quad c_k=a_k\neq b_k\tag{$\star$},
	\end{equation}
	there exists $d\in S-\{a,b,c\}$ that satisfies the following:
	\begin{enumerate}[(1)]
		\item $d_\ell\in\{a_\ell,b_\ell,c_\ell\}$ for all $\ell\in[n]$, and
		\item at least two of $d_i=c_i$, $d_j=a_j$, and $d_k=b_k$ hold. 
	\end{enumerate}
\end{LE}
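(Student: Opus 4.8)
The plan is to build, directly from $a,b,c$ and $i,j,k$, one explicit minor of $\mult(S)$ on a three-element ground set, and to show that this minor is $\Delta_3$ unless a vector $d$ with the two desired properties exists. Write $A,B,C$ for the members of $\mult(S)$ associated with $a,b,c$, and set $x:=a_i=b_i$, $y:=b_j=c_j$, $z:=c_k=a_k$. Since $i,j,k$ are distinct and the parts $V_1,\dots,V_n$ are pairwise disjoint, $x\in V_i$, $y\in V_j$, $z\in V_k$ are three distinct elements.

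I would consider the minor $\mathcal{C}':=\mult(S)\setminus I/J$, where
\[
I:=\bigcup_{\ell\in[n]}\big(V_\ell\setminus\{a_\ell,b_\ell,c_\ell\}\big),\qquad J:=\Big(\bigcup_{\ell\in[n]}\{a_\ell,b_\ell,c_\ell\}\Big)\setminus\{x,y,z\}.
\]
Disjointness of the parts makes $I,J$ disjoint with $I\cup J$ equal to the whole ground set minus $\{x,y,z\}$, so $\mathcal{C}'$ is a clutter on $\{x,y,z\}$. The first observation is that a member $D=\{d_1,\dots,d_n\}$ (for $d\in S$) survives the deletion of $I$ exactly when $d_\ell\in\{a_\ell,b_\ell,c_\ell\}$ for every $\ell$, i.e.\ exactly when $d$ satisfies conclusion $(1)$; and that for such $D$, contracting $J$ leaves $D-J$ equal to the set of those among $x,y,z$ that occur in $D$, which, since $x,y,z$ lie in $V_i,V_j,V_k$, equals $\{x:\,d_i=x\}\cup\{y:\,d_j=y\}\cup\{z:\,d_k=z\}$. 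Plugging in $a,b,c$ and using $a_j\ne y$, $b_k\ne z$, $c_i\ne x$ gives $A-J=\{x,z\}$, $B-J=\{x,y\}$, $C-J=\{y,z\}$, so all three members of $\Delta_3$ appear in the collection underlying $\mathcal{C}'$.

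It follows that $\mathcal{C}'=\Delta_3$ precisely when every $D$ surviving the deletion has $|D-J|\geq 2$, i.e.\ when every $d\in S$ satisfying $(1)$ has at least two of $d_i=x$, $d_j=y$, $d_k=z$; otherwise $\mathcal{C}'$ contains a set of size at most $1$ and hence $\mathcal{C}'\ne\Delta_3$. Now for $d$ satisfying $(1)$ we have $d_i\in\{x,c_i\}$, $d_j\in\{a_j,y\}$, $d_k\in\{z,b_k\}$, so $d_i\ne x\Leftrightarrow d_i=c_i$, $d_j\ne y\Leftrightarrow d_j=a_j$, and $d_k\ne z\Leftrightarrow d_k=b_k$. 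Thus ``at most one of $d_i=x$, $d_j=y$, $d_k=z$'' is exactly conclusion $(2)$, and any such $d$ differs from $a,b,c$ (each of which satisfies exactly two of the three equalities). Since $\mult(S)$ has no $\Delta_3$ minor, $\mathcal{C}'\ne\Delta_3$, so a $d\in S-\{a,b,c\}$ satisfying $(1)$ and $(2)$ exists, which completes the proof.

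I do not expect a genuine obstacle. The care required is the bookkeeping of the three equivalences ``$d_\ell$ equals the shared value $\Leftrightarrow$ $d_\ell$ equals the odd value'' at the coordinates $i,j,k$, and confirming that $\mathcal{C}'$ is a legitimate minor (disjointness of $I$ and $J$, three-element ground set) and that $\Delta_3$ really is forced when every surviving member has size at least $2$, which is immediate since the three two-element subsets of $\{x,y,z\}$ are present and every surviving member is contained in $\{x,y,z\}$.
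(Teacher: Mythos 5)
Your proof is correct and follows essentially the same route as the paper's: delete every element outside $\{a_\ell,b_\ell,c_\ell:\ell\in[n]\}$ and contract down so that only the three shared values $a_i, b_j, c_k$ remain, then observe that the members coming from $a,b,c$ give the three edges of a triangle, so absence of a $\Delta_3$ minor forces a surviving member of size at most one, i.e.\ the desired $d$. The only difference is organizational — you perform the contraction of $c_i,a_j,b_k$ in the same step as the other contractions and phrase the conclusion contrapositively, whereas the paper first passes to $\mult(R)$ on the six elements of coordinates $i,j,k$ and contracts $c_i,a_j,b_k$ afterwards.
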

\begin{proof}
	Let $V$ denote the ground set of $\mult(S)$. We may assume that there exist three distinct points $a,b,c\in S$ satisfying~\eqref{triple-condition} for some distinct $i,j,k\in[n]$. Take subsets $I,J$ of $[n]$ as follows:
	\[
	I=V-\{a_\ell,b_\ell,c_\ell:\ell\in[n]\}\quad\text{and}\quad J=\{a_\ell,b_\ell,c_\ell:\ell\in[n]-\{i,j,k\}\}.
	\] 
	We will show that if $d\in S-\{a,b,c\}$ satisfying (1) and (2) does not exsit, then $\mult(S)\setminus I/J$ contains $\Delta_3$ as a minor. 
	
	Notice that $\mult(S)\setminus I$ is $\mult(R_0)$ where $R_0=\left\{v\in S:~v_\ell\in\{a_\ell,b_\ell,c_\ell\}~\text{for}~\ell\in[n]\right\}$ and that each member of $\mult (R_0)$ is $\{v_1,\ldots,v_n\}$ for some $v\in S$. Furthermore, each $v\in R_0$ satisfies $\{v_1,\ldots,v_n\} - J=\{v_i,v_j,v_k\}$, so $\{v_1,\ldots,v_n\} - J$ remains minimal after contracting $J$ from $\mult(R_0)$. This in turn implies that $\mult(R_0)/J$ is equal to $\mult(R)$ where 
	$
	R:=\left\{(v_i,v_j,v_k):\;v\in S,\;v_\ell\in\{a_\ell,b_\ell,c_\ell\}~\text{for}~\ell\in[n]\right\}.
	$
	So, $\mult(S)\setminus I/J=\mult(R)$. By definition, $R$ contains points $(a_i,a_j,a_k)$, $(b_i,b_j,b_k)$, and $(c_i,c_j,c_k)$ that are obtained from $a,b,c$. Suppose that there is no $d\in S-\{a,b,c\}$ that satisfies (1) and (2). Let $d\in S$ with $d_\ell\in\{a_\ell,b_\ell,c_\ell\}$ for $\ell\in [n]$. Since $d$ satisfies (1), $d$ does not satisfy (2). Then $(d_i,d_j,d_k)$ can be $(c_i,b_j,c_k)$, $(a_i,a_j,c_k)$, $(a_i,b_j,b_k)$, or $(a_i,b_j,c_k)$, implying in turn that
	\[
	R\subseteq\left\{(a_i,a_j,a_k),(b_i,b_j,b_k),(c_i,c_j,c_k),(c_i,b_j,c_k), (a_i,a_j,c_k), (a_i,b_j,b_k),(a_i,b_j,c_k)
	\right\}.
	\]
	To argue that $\mult(R)$ contains $\Delta_3$ as a minor, let us look at the incidence matrix of $\mult(R)$:
	\[
	\bordermatrix{& a_i & \overbrace{\mathbf{c_i}} & \overbrace{\mathbf{a_j}} & b_j & c_k & \overbrace{\mathbf{b_k}}\cr
		a& 1 & \mathbf{0} & \mathbf{1} & 0 & 1 & \mathbf{0}\cr
		b& 1 & \mathbf{0} & \mathbf{0} & 1 & 0 & \mathbf{1}\cr
		c& 0 & \mathbf{1} & \mathbf{0} & 1 & 1 & \mathbf{0}\cr
		&   &   &   &\vdots&&  \cr	
	}
	\]
	Observe that a row of $M(\mult(R))$ other than the ones for $a,b,c$, if any, has at least two ones in the columns for $a_i,b_j,c_k$. So, after contracting the columns for $c_i,a_j,b_k$ and removing non-minimal rows, the resulting incidence matrix is precisely $M(\Delta_3)$.
	This implies that we obtain $\Delta_3$ after contracting $c_i,a_j,b_k$ from $\mult(R)$, a contradiction to the assumption that $\mult(S)$ has no $\Delta_3$ minor.
\end{proof}

\begin{LE}\label{LE:non-disjoint-supports}
	Take an integer $n\geq 1$ and a prime power $q$, and  let $S\subseteq GF(q)^n$ be a vector space over $GF(q)$. If $S$ does not admit a basis with vectors of pairwise disjoint supports, then $\mult(S)$ contains $\Delta_3$ or $Q_6$ as a minor. Moreover, if $q$ is an odd prime power, then $\mult(S)$ contains $\Delta_3$ as a minor.
\end{LE}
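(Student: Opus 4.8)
The plan is to reduce everything, through the matroid translation of \Cref{sec:matroid}, to a single $3$-dimensional ambient space and then conclude by hand, splitting on the parity of $q$ only at the very end.

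First I would reformulate the hypothesis. By \Cref{disj-supp}, $S$ has no disjoint-supports basis exactly when $\mat(S)$ is not a direct sum of circuits and coloops, so some connected component $M$ of $\mat(S)$ is neither a circuit nor a coloop. By \Cref{direct-sum}(3), $S = S'\times S''$ with $\mat(S')=M$; restricting $S$ to $V_1\times\cdots\times V_{n_1}\times\{u\}$ for a fixed $u\in S''$ (in the sense of \Cref{sec:cuboid}) returns $S'$, so by \Cref{projection} $\mult(S')$ is a minor of $\mult(S)$, and it suffices to treat the case where $\mat(S)$ itself is connected and is neither a circuit nor a coloop. Such an $\mat(S)$ has at least two elements, and since a connected matroid of corank at most $1$ is a single coloop or a single circuit, it has corank at least $2$.

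The heart of the argument is the claim that a connected matroid $M$ that is neither a circuit nor a coloop has $U_{1,3}$ as a minor. I would prove this by taking a counterexample $M$ with $|E(M)|$ minimum and deriving a contradiction: on at most three elements the only connected matroid that is neither a circuit nor a coloop is $U_{1,3}$ itself, which does have a $U_{1,3}$-minor, so $m:=|E(M)|\ge 4$. Fix $e\in E(M)$; by a standard matroid-connectivity fact (see~\cite{Oxley11}) at least one of $M\setminus e$, $M/e$ is connected, and each is a proper minor, hence—by minimality—a direct sum of circuits and coloops; being connected on $m-1\ge 3$ elements, whichever is connected must be a single circuit. If $M/e$ is that circuit then $\rank M=\rank(M/e)+1=m-1$, so $M$ has corank $1$ and is therefore a circuit, a contradiction. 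If $M\setminus e$ is that circuit then $\rank M=\rank(M\setminus e)=m-2$ (otherwise $e$ is a coloop of the connected matroid $M$), so $M$ has corank $2$; then $M^\ast$ is connected, loopless and of rank $2$ on $m\ge 4$ elements, hence has at least three parallel classes, so one element from each of three distinct classes gives $U_{2,3}$ as a restriction of $M^\ast$, and dualizing shows $M$ has $U_{1,3}=U_{2,3}^\ast$ as a minor, again a contradiction. This step—in particular the small reductions and keeping track of parallel/series classes inside the minimal counterexample—is where essentially all of the work lies.

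It remains to transport this back to clutters and finish. By \Cref{LE:matroid-minor} the $U_{1,3}$-minor of $\mat(S)$ is $\mat(S^\ast)$ for some $S^\ast\subseteq GF(q)^3$ obtained from $S$ by a restriction followed by a projection, so $\mult(S^\ast)$ is a minor of $\mult(S)$ by \Cref{projection}; moreover $\dim S^\ast = 3-\rank(U_{1,3}) = 2$, so $S^\ast = \{x\in GF(q)^3 : a_1x_1+a_2x_2+a_3x_3 = 0\}$, and $\mat(S^\ast)=U_{1,3}$ forces $a_1,a_2,a_3\neq 0$. Applying the coordinatewise bijections $f_i(t)=a_it$ and \Cref{RE:relabel-elts}, we may assume $S^\ast = T:=\{x\in GF(q)^3 : x_1+x_2+x_3=0\}$. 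Now if $q$ is even then, since $1+1=0$ in $GF(q)$, the restriction of $T$ to $\{0,1\}^3$ equals $\{(0,0,0),(1,1,0),(1,0,1),(0,1,1)\}=R_{1,1}$, no coordinate being constant on it, so $\mult(R_{1,1})\cong Q_6$ is a minor of $\mult(T)$ and hence of $\mult(S)$. If $q$ is odd, apply \Cref{q-space:triple} to $T$ with $a=(1,-1,0)$, $b=(1,0,-1)$, $c=(0,0,0)$ and $(i,j,k)=(1,2,3)$: these three points are distinct, lie in $T$, and satisfy $(\star)$, yet the only points of $T$ whose every coordinate lies in $\{a_\ell,b_\ell,c_\ell\}$ are $a,b,c$ themselves, since the remaining five candidates in $\{0,1\}\times\{-1,0\}\times\{-1,0\}$ all fail $x_1+x_2+x_3=0$ because $2\neq 0$ in $GF(q)$; thus no point $d$ as in \Cref{q-space:triple} exists, so $\mult(T)$, and hence $\mult(S)$, contains $\Delta_3$ as a minor. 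The main obstacle is the matroid claim of the third paragraph; the rest is routine given \Cref{sec:cuboid} and \Cref{sec:matroid}.
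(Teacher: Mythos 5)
Your proof is correct, and it takes a genuinely different route from the paper's. The paper argues directly and computationally: it puts a basis of $S$ in reduced form, uses the failure of disjoint supports to find two basis vectors sharing a nonzero coordinate beyond the pivots, and applies \Cref{q-space:triple} to the triple $\mathbf{0},xv^1,yv^2$; either no fourth point exists and a $\Delta_3$ minor appears, or the forced point $xv^1+yv^2$ yields $1+1=0$, so $q$ is even and the projection onto three coordinates is $R_{1,1}$, giving $Q_6$. You instead pass to $\mat(S)$, prove that a connected matroid which is neither a circuit nor a coloop has a $U_{1,3}=\mat(A_3)$ minor (minimal counterexample, Tutte's deletion/contraction connectivity theorem from \cite{Oxley11}, and a corank analysis), transport this back via \Cref{LE:matroid-minor}, \Cref{projection} and \Cref{RE:relabel-elts} to the canonical space $T=\{x\in GF(q)^3:\;x_1+x_2+x_3=0\}$, and only then split on parity: for even $q$ the restriction of $T$ to $\{0,1\}^3$ is $R_{1,1}$, giving $Q_6$; for odd $q$ the triple $(1,-1,0),(1,0,-1),(0,0,0)$ admits no fourth point since $2\neq 0$, so \Cref{q-space:triple} yields $\Delta_3$. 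Your key matroid claim is essentially the paper's \Cref{LE:non-disjoint-supports'}, which the paper proves by a different circuit-intersection argument and uses only in Section~\ref{sec:q>4}; your route in effect front-loads that lemma. What your approach buys is a reusable structural fact and the slightly sharper unconditional conclusion that every even $q$ gives a $Q_6$ minor; what the paper's buys is brevity and self-containedness, needing no external connectivity theorem. The small steps you leave implicit---that $M$ has no loops or coloops because it is connected with at least two elements (needed for $\rank(M/e)=\rank(M)-1$ and for $M^*$ being loopless), and that a connected matroid of corank $1$ is a circuit---are standard and easily filled in, so there is no gap.
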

\begin{proof}
	Assume that $S$ does not admit a basis with vectors of pairwise disjoint supports. We will show that if $\mult(S)$ does not contain $\Delta_3$ as a minor, then $q$ is a power of $2$ and $\mult(S)$ contains $Q_6$ as a minor.
	
	Assume that $S$ contains no $\Delta_3$ as a minor. Let $v^1,\ldots,v^r\in GF(q)^n$ be a basis of $S$. After elementary arithmetic operations over $GF(q)$, we may assume that for each $i=1,\ldots,r$,
	\[
	v^i_i=1\quad\text{and}\quad v^i_j=0\quad\forall j\in[r]-\{i\}
	\] 
	Since there is no basis of $S$ with vectors of pairwise disjoint supports, we may assume that $v^1_{r+1},v^2_{r+1}\neq0$. This in turn implies that $n\geq 3$. Let $x$ and $y$ be the multiplicative inverses of $v^1_{r+1}$ and $v^2_{r+1}$ in $GF(q)$, respectively. Let $a:=\mathbf{0}\in GF(q)^n$, $b:=xv^1$, and $c:=yv^2$. Notice that $a,b,c\in S$ and that $a,b,c$ satisfy
	\[(a_1,a_2,a_{r+1})=(0,0,0),\quad (b_1,b_2,b_{r+1})=(x,0,1),\quad (c_1,c_2,c_{r+1})=(0,y,1).\]
	Now we consider $R=\{d\in S:d_j\in\{a_j,b_j,c_j\}~\text{for}~j\in[n]\}$.
	\begin{claim}
		$R\subseteq \left\{\lambda_1v^1+\lambda_2v^2:\;\lambda_1\in\{0,x\},\;\lambda_2\in\{0,y\}\right\}$.
	\end{claim}
	\begin{cproof}
		Let $u\in R$. Then $u=\sum_{j=1}^r\lambda_j v^j$ for some $\lambda_1,\ldots,\lambda_r\in GF(q)$. Since $a_j=b_j=c_j=0$ for $j=3,\ldots,r$, it follows that $u_3=\cdots=u_r=0$, which implies that $\lambda_3=\cdots=\lambda_r=0$ and so $u=\lambda_1v^1+\lambda_2v^2$. Notice that $\lambda_1\in\{0,x\}$ and $\lambda_2\in\{0,y\}$, because $a_1,b_1,c_1\in\{0,x\}$ and $a_2,b_2,c_2\in\{0,y\}$.
	\end{cproof}
	
	\begin{claim}
		$q$ is a power of $2$ and $R=\left\{\lambda_1v^1+\lambda_2v^2:\;\lambda_1\in\{0,x\},\;\lambda_2\in\{0,y\}\right\}$.
	\end{claim}
	\begin{cproof}
		By \Cref{q-space:triple}, $R$ contains a $d\not\in\{a,b,c\}$ such that
		$
		(d_1,d_2,d_{r+1})=(0,y,0),\;(x,0,0),\;(x,y,1)$, or $(x,y,0).
		$
		By Claim~1, $d\in\left\{\lambda_1v^1+\lambda_2v^2:\;\lambda_1\in\{0,x\},\;\lambda_2\in\{0,y\}\right\}$. As $d\neq a,b,c$, it must be that $xv^1+yv^2=d$, so $xv^1+yv^2\in R$. In particular, $R=\left\{\lambda_1v^1+\lambda_2v^2:\;\lambda_1\in\{0,x\},\;\lambda_2\in\{0,y\}\right\}$. Since $d=xv^1+yv^2$, we obtain $(xv^1+yv^2)_{r+1}=1+1=d_{r+1}\in\{0,1\}$. Since $1\neq 0$, we have $1+1=0$, so $q$ is a power of~2, as required. 
	\end{cproof}
	
	By Claim~2, $R=\left\{\lambda_1v^1+\lambda_2v^2:\;\lambda_1\in\{0,x\},\;\lambda_2\in\{0,y\}\right\}$, so the projection of $R$ onto the space of coordinates $1,2,r+1$ is precisely $R_{1,1}$. Since $\mult(R_{1,1})=Q_6$, $\mult(S)$ has $Q_6$ as a minor by \Cref{projection}. So, we have shown that if $\mult(S)$ has no $\Delta_3$ as a minor, then $q$ is a power of 2 and $\mult(S)$ contains $Q_6$ as a minor, as required.
\end{proof}

\begin{LE}\label{disj->ideal}
Take an integer $n\geq 1$ and a prime power $q$, and let $S\subseteq GF(q)^n$ be a vector space over $GF(q)$. If $S$ has a basis with vectors of pairwise disjoint supports, then $\mult(S)$ has the MFMC property, and is therefore ideal.
\end{LE}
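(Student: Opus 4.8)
The plan is to reduce to the case of a $1$-dimensional vector space and then handle that base case by a direct covering/packing computation. By \Cref{disj-supp}, the hypothesis that $S$ admits a basis with vectors of pairwise disjoint supports is equivalent to $S=S_1\times\cdots\times S_k$ where each $S_i$ is a vector space over $GF(q)$ of dimension at most $1$. By \Cref{set-product}(1) we have $\mult(S)=\mult(S_1)\times\cdots\times\mult(S_k)$, and by \Cref{set-product}(2), applied inductively, it suffices to show that $\mult(T)$ has the MFMC property whenever $T\subseteq GF(q)^m$ is a vector space of dimension at most $1$.

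If $\dim T=0$ then $\mult(T)$ has a single member $C$, and for every $w\in\mathbb{Z}_+^V$ both $\tau(\mult(T),w)$ and $\nu(\mult(T),w)$ equal $\min\{w_v:v\in C\}$, so the MFMC property holds trivially. So suppose $\dim T=1$, say $T=\langle v\rangle$ with $v\neq\mathbf{0}$. After permuting the coordinates of $GF(q)^m$ -- which only relabels the parts $V_1,\ldots,V_m$ and hence replaces $\mult(T)$ by an isomorphic clutter -- we may assume that $v=(\bar v,\mathbf{0})$ with $\bar v$ having no zero entry, so that $T=\langle\bar v\rangle\times\{\mathbf{0}\}$. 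By \Cref{set-product}(1)--(2) together with the $\dim=0$ case already treated, it is then enough to prove that $\mult(\langle\bar v\rangle)$ has the MFMC property when $\bar v$ has full support.

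For this I would observe that the members of $\mult(\langle\bar v\rangle)$ are pairwise disjoint: in each part $V_i$ the map $\lambda\mapsto\lambda\bar v_i$ is a bijection of $GF(q)$ since $\bar v_i\neq 0$, so every element of $V_i$ lies in exactly one member, and therefore distinct members share no element. It remains to note that a clutter $\mathcal{C}$ with pairwise disjoint members has the MFMC property: for any $w\in\mathbb{Z}_+^V$ the covering and packing problems decouple over the members, a minimum cover being obtained by choosing a cheapest element of each member (so $\tau(\mathcal{C},w)=\sum_{C\in\mathcal{C}}\min_{v\in C}w_v$) and an optimal integral packing by setting $y_C=\min_{v\in C}w_v$ for each member $C$ (giving the same value for $\nu(\mathcal{C},w)$). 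This shows $\mult(S)$ has the MFMC property, and ``therefore ideal'' then follows since the MFMC property implies idealness.

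I do not expect a genuine obstacle here; the content is in organizing the reductions, and the only delicate point is the treatment of coordinates on which $v$ vanishes. Such a coordinate $i$ contributes an element $0\in V_i$ lying in every member, so the members of $\mult(\langle v\rangle)$ are not literally disjoint before the coordinate-permutation step. The plan above sidesteps this by splitting off those coordinates as trivial $0$-dimensional factors via \Cref{set-product}; alternatively one could carry them along and redo the $\tau=\nu$ computation directly, noting that an element lying in all members contributes only the extra cover $\{v\}$ of weight $w_v$ and the extra packing constraint $\mathbf{1}^\top y\le w_v$, neither of which breaks the equality $\tau=\nu$.
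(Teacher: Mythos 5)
Your proposal is correct and follows essentially the same route as the paper's proof: reduce via \Cref{disj-supp} and \Cref{set-product} to the factors $\langle u^i\rangle$ (with $u^i$ of full support) and $\{\mathbf{0}\}$, and observe that the former have pairwise disjoint members while the latter has a single member, so each factor has the MFMC property. Your treatment is in fact slightly more careful than the paper's, since you spell out why pairwise disjoint members give $\tau=\nu$ and why the zero coordinates can be split off harmlessly, points the paper asserts without detail.
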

\begin{proof}
Assume that $S$ has a basis of vectors with pairwise disjoint supports. Then we may assume that $S=\langle u^1\rangle\times\cdots\times\langle u^r\rangle\times \{\mathbf{0}\}$ for some vectors $u^1,\ldots,u^r$ with no zero entries over $GF(q)$, by \Cref{disj-supp}. Subsequently, $\mult(S)=\mult(\langle u^1\rangle)\times\cdots\times\mult(\langle u^r\rangle)\times\mult(\{\mathbf{0}\})$, and to prove $\mult(S)$ has the MFMC property, it suffices to argue that $\mult(\langle u^i\rangle)$ for $i\in[r]$ and $\mult(\{\mathbf{0}\})$ have the MFMC property, by \Cref{set-product}. First, notice that $\mult(\{\mathbf{0}\})$ has only one member, so it clearly has the MFMC property. In fact, we can argue that each $\mult(\langle u^i\rangle)$ has pairwise disjoint members as well. Notice that for any distinct $x,y\in GF(q)$, $xu^i$ and $yu^i$ do not have common coordinates, implying in turn that the members of $\mult\left(\langle u^i\rangle\right)$ corresponding to $xu^i$ and $yu^i$ are disjoint. That means that the members of $\mult\left(\langle u^i\rangle\right)$ are pairwise disjoint, implying in turn that it has the MFMC property, thereby proving that $\mult(S)$ has the MFMC property, as required.
\end{proof}

Having proved Lemmas~\ref{LE:non-disjoint-supports} and~\ref{disj->ideal}, we are now ready to show~\Cref{mfmc}. The basic flow of our proof is as follows. \Cref{disj->ideal} shows that if a vector space $S$ has a basis with vectors of pairwise disjoint supports, then $\mult(S)$ has the MFMC property. Conversely, \Cref{LE:non-disjoint-supports} argues that if a vector space $S$ does not admit such a basis, then $\mult(S)$ has some minors certifying that the clutter does not have the MFMC property. More details are explained in the proof as follows.

\begin{proof}[{\bf Proof of \Cref{mfmc}}]
{\bf (iii)}$\Rightarrow${\bf (ii)} follows from \Cref{LE:non-disjoint-supports}. 
{\bf (ii)}$\Rightarrow${\bf (i)} follows from \Cref{disj->ideal}.
{\bf (i)}$\Rightarrow${\bf (iii)}: Assume that $\mult(S)$ has the MFMC property. $\Delta_3$ is a non-ideal clutter, so it does not have the max-flow min-cut property. Recall that $Q_6$ is the clutter of triangles in $K_4$. Notice that the minimum number of edges required to intersect every triangle in $K_4$ is two and that the maximum number of disjoint triangles in $K_4$ is one. This implies that $\tau(Q_6,\mathbf{1})=2$ and $\nu(Q_6,\mathbf{1})=1$, so $Q_6$ does not have the max-flow min-cut property. Like idealness, the MFMC property is a minor-closed property~\cite{Seymour77}. Therefore, a clutter with the MFMC property contains none of $\Delta_3,Q_6$ as a minor, implying in turn that $\mult(S)$ has none of $\Delta_3, Q_6$ as a minor.
\end{proof}

The proof of~\Cref{q odd} works similarly as that of \Cref{mfmc}. The additional component is that when $q$ is an odd prime power and a vector space $S$ over $GF(q)$ does not admit a basis with vectors of pairwise disjoint supports, then $\mult(S)$ has a non-ideal minor due to \Cref{LE:non-disjoint-supports}.

\begin{proof}[{\bf Proof of \Cref{q odd}}]
Take an integer $n\geq 1$ and an odd prime power $q$, and let $S\subseteq GF(q)^n$ be a vector space over $GF(q)$. Since $\Delta_3$ is non-ideal, direction {\bf (i)}$\Rightarrow${\bf (iii)} is clear. Direction {\bf (iii)}$\Rightarrow${\bf (ii)} follows from \Cref{LE:non-disjoint-supports}, and \Cref{disj->ideal} shows direction {\bf (ii)}$\Rightarrow${\bf (i)}. Therefore, {\bf (i)}--{\bf(iii)} are equivalent. 
\end{proof}

\section{Fields of characteristic $2$: a structure theorem}\label{sec:p=2-structure}

In this section, we prove \Cref{LE:no U24 K4/e} which provides an important tool for characterizing the idealness of $\mult(S)$ where $S$ is a vector space over $GF(2^k)$ for $k\geq2$. To be more precise, \Cref{LE:no U24 K4/e} characterizes the structure of the underlying matroid $\mat(S)$ when $\mult(S)$ is ideal and thus has no $\Delta_3$ as a minor.

\begin{LE}\label{PR:U24}
	Let $q$ be a power of 2, and let $S\subseteq GF(q)^4$ be a vector space over $GF(q)$. If $\mat(S)$ is isomorphic to $U_{2,4}$, then $\mult(S)$ has $\Delta_3$ as a minor.
\end{LE}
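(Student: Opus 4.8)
The plan is to exhibit an explicit $\Delta_3$ minor of $\mult(S)$ using the geometry of $U_{2,4}$, leveraging Lemma~\ref{q-space:triple} to force a contradiction if no such minor existed. Since $\mat(S)\cong U_{2,4}$, the vector space $S$ has dimension $n-\rank(\mat(S))=4-2=2$, and every $2$-subset of the four coordinates is a basis of the matroid, so every nonzero vector in $S$ has support of size exactly $3$ (no circuit of $U_{2,4}$ has size $1$ or $2$, and the rank is $2$ so every $3$-subset is a circuit). Thus $S$ has exactly $q^2-1$ nonzero vectors, each with a single zero coordinate, and for each coordinate $i\in[4]$ the number of vectors vanishing at $i$ is $q-1$ (a line through the origin). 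After applying Remark~\ref{RE:relabel-elts} and scaling, I would normalize a convenient basis $v^1,v^2$ of $S$: say $v^1$ has zero in coordinate $1$ and $v^2$ has zero in coordinate $2$, with entries scaled so that $v^1=(0,1,a,b)$ and $v^2=(1,0,c,d)$ for suitable nonzero $a,b,c,d\in GF(q)-\{0\}$ (nonzero because the supports have size $3$).

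First I would locate three points $x,y,z\in S$ and three coordinates $i,j,k$ satisfying the hypothesis~\eqref{triple-condition} of Lemma~\ref{q-space:triple}. The natural candidates are $x=v^1=(0,1,a,b)$, $y=v^2=(1,0,c,d)$, and $z=v^1+v^2=(1,1,a+c,b+d)$; note $z$ has full support iff $a+c\neq 0$ and $b+d\neq 0$, which I can arrange by choosing which two vectors to call $v^1,v^2$ (if some sum vanished we would instead be in a configuration where two basis vectors share a zero pattern, contradicting that all four $3$-subsets are circuits — this bookkeeping needs care but is finite). Looking at coordinates $1$ and $2$: $x_1=z_1$? No: $x_1=0\neq 1=z_1$. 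Let me instead take the triple to be any three of the four ``coordinate lines.'' A cleaner route: pick three distinct nonzero scalars and form $a'=\lambda v^1$, etc. The key structural fact I want is that, restricted to the coordinates $\{1,2,k\}$ for a suitable third coordinate $k$, the projection of a suitable three-point restriction looks like a $\Delta_3$-producing configuration. Concretely, I would take the restriction $R=\{d\in S: d_j\in\{x_j,y_j,z_j\}\ \forall j\}$ as in the proof of Lemma~\ref{LE:non-disjoint-supports}; since $\dim S=2$ and the three reference points already span $S$ affinely, $R$ is controlled, and Lemma~\ref{q-space:triple} forces an extra point $d$, which I then show cannot exist because $\dim S=2$ pins down all of $S$ — unlike the $Q_6$ case in Lemma~\ref{LE:non-disjoint-supports}, here the forced fourth point $d=x+y-(\text{something})$ would have to be a fifth distinct vector among only $q^2-1$ vectors, but more sharply its coordinate pattern on $\{i,j,k\}$ is incompatible with lying on the line $\langle v^1,v^2\rangle$.

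The cleanest execution, which I would aim for, is: show directly that after relabeling, $\mult(S)$ contains the minor $\mult(R)$ where $R\subseteq GF(q)^3$ is a $3$-point (or suitably structured) set whose multipartite clutter is $\Delta_3$, by an argument parallel to the incidence-matrix computation at the end of Lemma~\ref{q-space:triple}'s proof. That is, I would choose the deletions to isolate, in three of the four coordinate-copies $V_i$, exactly the three relevant field elements, contract the rest, and check the resulting incidence matrix is $M(\Delta_3)$ after removing dominated rows; the fact that every nonzero vector of $S$ has support of size $3$ is exactly what guarantees the three members pairwise intersect in one coordinate, yielding the triangle. The main obstacle I anticipate is the case analysis in choosing $v^1,v^2$ and the third coordinate $k$ so that all the relevant sums of entries are nonzero (i.e., that the three chosen points genuinely satisfy~\eqref{triple-condition} and that no degeneracy collapses the would-be $\Delta_3$ into something smaller) — this is where characteristic $2$ matters, since $1+1=0$ makes certain sums vanish and one must check these vanishings are compatible with, rather than obstructive to, producing $\Delta_3$. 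Handling that degeneracy uniformly (perhaps by observing that $U_{2,4}$ is representable over $GF(q)$ only for $q\geq 3$, but here $q$ is a power of $2$ so $q\geq 4$, giving enough nonzero scalars to dodge the bad cases) is the crux.
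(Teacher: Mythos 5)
Your overall strategy is the paper's: use the rank computation to get $\dim S=2$, use the fact that $U_{2,4}$ has no circuits of size $1$ or $2$ (so nonzero vectors of $S$ have support of size at least $3$), and then invoke Lemma~\ref{q-space:triple} on a well-chosen triple in $S$ to force a point $d$ whose existence is incompatible with the $U_{2,4}$ structure. However, there is a genuine gap: the step you yourself flag as ``the crux'' --- actually exhibiting three points $a,b,c\in S$ satisfying~\eqref{triple-condition} in such a way that the forced point $d$ leads to a contradiction --- is never carried out. Your first candidate triple $v^1,v^2,v^1+v^2$ fails condition~\eqref{triple-condition} (as you note), and you then only gesture at alternatives (``pick three distinct nonzero scalars\ldots'', ``this bookkeeping needs care but is finite''). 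The paper resolves exactly this point with a specific scaling: writing $v^1=(1,0,x,y)$, $v^2=(0,1,z,w)$ with $x,y,z,w\neq 0$, it takes $a:=(-x^{-1}z)v^1$, $b:=v^2$, $c:=a+b$, so that $a_1=c_1\neq b_1=0$, $b_2=c_2\neq a_2=0$, and, crucially using characteristic $2$ ($z=-z$), $a_3=b_3\neq c_3=0$. The point of this choice is that the three ``disagreement'' values $b_1,a_2,c_3$ are all zero, so the $d$ produced by Lemma~\ref{q-space:triple} has at least two zero coordinates among $\{1,2,3\}$, hence support of size at most $2$, hence $d=\mathbf{0}$; then $d_4\in\{a_4,b_4,c_4\}$ with $a_4,b_4\neq 0$ forces $c_4=0$, so $c$ has support of size $2$, a contradiction. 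Your sketch asserts that the forced $d$ is ``incompatible with lying on the line $\langle v^1,v^2\rangle$,'' but that is not the right obstruction ($\mathbf{0}$ lies on that line); the contradiction comes from the support-size bound plus the fourth coordinate, and it only works for a triple engineered so that the relevant agreement values vanish.

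A secondary error: your structural claim that every nonzero vector of $S$ has support of size exactly $3$ (``$S$ has exactly $q^2-1$ nonzero vectors, each with a single zero coordinate'') is false. For each coordinate $i$, the vectors of $S$ vanishing at $i$ form a $1$-dimensional subspace, giving only $4(q-1)$ nonzero vectors with a zero coordinate, while $S$ has $(q-1)(q+1)$ nonzero vectors; since the relevant case is $q\geq 4$ (as $U_{2,4}$ is not $GF(2)$-representable), most nonzero vectors of $S$ have full support. What is true, and what the argument needs, is only the lower bound: every nonzero vector has support of size at least $3$. Your proposed ``cleanest execution'' (directly isolating three elements per part and checking the incidence matrix equals $M(\Delta_3)$) leans on the false claim and is likewise not carried out, so as written the proposal does not constitute a proof.
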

\begin{proof}
	Suppose for a contradiction that $\mult(S)$ has no $\Delta_3$ as a minor. Since the rank of $U_{2,4}$ is 2, the dimension of $S$ is $4-2=2$. Let $v^1,v^2\in GF(q)^4$ be two generators of $S$. By elementary row operations, we may assume that $(v^1_1,v^1_2)=(1,0)$ and $(v^2_1,v^2_2)=(0,1)$. Then
	\[
	\begin{matrix}
	v^1\\
	v^2
	\end{matrix}
	\left[
	\begin{array}{cc|cc}
	1 & 0 & x & y\\
	0 & 1 & z & w
	\end{array}
	\right]
	\]
	where $x,y,z,w\in GF(q)$. Each circuit of $U_{2,4}$ has size 3, so $x,y,z,w\neq 0$. Then $a:=(-x^{-1}z)v^1$, $b:=v^2$, $c:=a+b$ are vectors in $S$. Observe that
	\[
	\begin{matrix}
	a\\
	b\\
	c
	\end{matrix}
	\left[
	\begin{array}{c|c|c|c}
	-x^{-1}z & 0 & -z & -x^{-1}yz\\
	0 & 1 & z & w\\
	-x^{-1}z & 1 & 0 & -x^{-1}yz +w
	\end{array}
	\right]
	\]
	and that $a_1=c_1\neq b_1$, $b_2=c_2\neq a_2$. We also have that $a_3=b_3\neq c_3$, because $q$ being a power of 2 implies $z+z=0$ and $z=-z$. By \Cref{q-space:triple}, there is a vector $d\in GF(q)^4$ that satisfies at least two of $d_1=b_1=0$, $d_2=a_2=0$, $d_3=c_3=0$ and satisfies $d_4\in\{-x^{-1}yz,w,-x^{-1}yz+w\}$. But then the support of $d$ has size at most 2. Since every circuit of $U_{2,4}$ has size 3, $d=\mathbf{0}$, and therefore, $d_4=-x^{-1}yz+w=0$. This implies the support of $c$ has size 2, a contradiction.
\end{proof}

\begin{wrapfigure}{r}{0.25\textwidth}
	\begin{center}
		\begin{tikzpicture}
		\draw[fill=black] (0,0) circle (3pt);
		\draw[fill=black] (4,0) circle (3pt);
		\draw[fill=black] (2,2) circle (3pt);
		\node[-] (1) at (0,0) {};
		\node[-] (2) at (2,2) {};
		\node[-] (3) at (4,0) {};
		
		\node[-,label=below:1] (a) at (2,0) {};
		\node[-,label=above:2] (b) at (1.3,0.7) {};
		\node[-,label=above:3] (c) at (2.7,0.7) {};
		\node[-,label=above:4] (d) at (0.7,1.3) {};
		\node[-,label=above:5] (e) at (3.3,1.3) {};
		\path[thick, bend left] (1) edge (2);
		\path[thick, bend right] (1) edge (2);
		\path[thick, bend left] (3) edge (2);
		\path[thick, bend right] (3) edge (2);
		\path[thick] (1) edge (3);
		\end{tikzpicture}
		\caption{$K_4/e$}\label{fig:K4/e}
	\end{center}
\end{wrapfigure}

\paragraph{Graph minors.}
We say that a graph $H$ is a {\it graph minor} of a graph $G$ if $H$ can be obtained from $G$ after a series of edge deletions, edge contractions, and deletions of isolated vertices. If $G$ is connected, then $H$ is a graph minor of $G$ if and only if for some disjoint subsets $E_1,E_2$ of $E(G)$, we can obtain $H$ from $G$ by deleting $E_1$ and contracting $E_2$. 
It is well-known that if $H$ is a graph minor of $G$, then $\mat(H)$ is a matroid minor of $\mat(G)$~(see Chapter 3.2 in~\cite{Oxley11}).

$K_4$ is the complete graph on $4$ vertices, and we denote by $K_{4}/e$ what is obtained from $K_4$ after contracting an edge from it (see Figure~\ref{fig:K4/e}).

\begin{LE}\label{PR:K4/e}
	Let $q=2^k$ for some $k\geq 2$, and let $S\subseteq GF(q)^5$ be a vector space over $GF(q)$. If $\mat(S)$ is isomorphic to $\mat(K_4/e)$, then $\mult(S)$ has $\Delta_3$ as a minor.
\end{LE}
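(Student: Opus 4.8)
The plan is to mimic the strategy of \Cref{PR:U24}, now applied to the matroid $\mat(K_4/e)$ on ground set $\{1,2,3,4,5\}$ as in Figure~\ref{fig:K4/e}: here element $1$ is the edge between the two degree-$3$ vertices, $\{2,4\}$ is one pair of parallel edges, and $\{3,5\}$ is the other pair. The circuits of $\mat(K_4/e)$ are the three two-element circuits $\{2,4\}$, $\{3,5\}$, $\{1,2,3\}$-type triples and the four-element circuits — more precisely the circuits are $\{2,4\}$, $\{3,5\}$, $\{1,2,3\}$, $\{1,2,5\}$, $\{1,3,4\}$, $\{1,4,5\}$ (using the two triangles through edge $1$ and the relation $2\sim 4$, $3\sim 5$). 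Since $\mat(K_4/e)$ has rank $3$ (a spanning tree of $K_4/e$ has $3$ edges), $S\subseteq GF(q)^5$ has dimension $5-3=2$, so $S=\langle v^1,v^2\rangle$. Using that $\{2,4\}$ and $\{3,5\}$ are the parallel classes while coordinate $1$ is in series with nothing, I would first put the generating matrix into a normalized form: after row operations assume the two generators restricted to coordinates $\{1,2\}$ (say) form the identity, so
\[
\begin{matrix} v^1\\ v^2\end{matrix}
\left[\begin{array}{cc|ccc}
1 & 0 & x & y & z\\
0 & 1 & w & s & t
\end{array}\right].
\]
The parallel/series structure of $\mat(K_4/e)$ forces several entries: elements $2$ and $4$ being parallel means the columns for $2$ and $4$ are scalar multiples, element $1$ lying on both triangles forces the corresponding $3\times 3$ minors to vanish, etc. Working these out pins down the matrix up to scaling of the two generators and up to relabeling within parallel classes; this bookkeeping is the routine part.

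Next, as in \Cref{PR:U24}, I would produce an explicit triple $a,b,c\in S$ together with three coordinates $i,j,k$ satisfying condition~\eqref{triple-condition} of \Cref{q-space:triple}, then use characteristic $2$ (so $z=-z$, $z+z=0$ for every $z\in GF(q)$) to verify the "$a_k=b_k\ne c_k$" clause that typically needs the field to have characteristic $2$. The natural choice: take $b:=v^1$ scaled so that its entry in coordinate $1$ matches, $c$ another scaled generator, and $a:=b+c$ or a further scalar multiple, arranged so that the three coordinates $i,j,k$ can be chosen among $\{1,2,3,4,5\}$ to witness~\eqref{triple-condition}. The point is that $\mat(K_4/e)$ has enough two-element circuits and small triangles that such a configuration exists; concretely one should be able to pick $i$ in one parallel class, $j$ in the other, and $k=1$ (the series-free edge), using the two triangles through edge $1$ to control the pattern.

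Then \Cref{q-space:triple} hands us a $d\in S-\{a,b,c\}$ with $d_\ell\in\{a_\ell,b_\ell,c_\ell\}$ for all $\ell$ and with at least two of $d_i=c_i$, $d_j=a_j$, $d_k=b_k$ holding; I would argue, exactly as in \Cref{PR:U24}, that forcing two of these equalities to $0$ (having arranged the witness so the "third" values are $0$) makes $\supp(d)$ too small to contain a circuit of $\mat(K_4/e)$ — every circuit of $\mat(K_4/e)$ has size $\geq 2$, and in fact the vanishing of those two coordinates kills an entire parallel class or triangle — so $d=\mathbf 0$, and the resulting algebraic identity among $x,y,z,w,s,t$ then contradicts one of the circuit conditions of $\mat(K_4/e)$ (e.g. it collapses a four-element or three-element circuit, or merges the two parallel classes, contradicting $\mat(S)\cong\mat(K_4/e)$). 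That final contradiction is the analogue of "the support of $c$ has size $2$" in \Cref{PR:U24}.

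The main obstacle I anticipate is the case analysis in choosing the witness $(a,b,c;i,j,k)$: $\mat(K_4/e)$ is less symmetric than $U_{2,4}$ (it has two parallel classes of size $2$ plus one coloop-like edge), so there may be a couple of genuinely different normalized forms depending on which parallel class contains the pivot coordinates, and one must check that in each the hypothesis $\mat(S)\cong\mat(K_4/e)$ rather than some relaxation (like $U_{2,4}$ with a parallel element, which is ruled out by \Cref{PR:U24} anyway) is actually being used. A clean way to organize this is to first use \Cref{PR:U24} to assume $\mat(S)$ has no $U_{2,4}$ minor, which constrains the normalized matrix further, and then reduce to essentially one configuration; but the precise entries and the verification of~\eqref{triple-condition} will still require careful hand computation over $GF(2^k)$, using $k\geq 2$ only to guarantee $GF(q)$ has an element outside $\{0,1\}$ so that the required distinct scalars exist.
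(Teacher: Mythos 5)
There is a concrete error at the very start of your setup that invalidates the rest: $K_4/e$ has three vertices, so a spanning tree has $2$ edges and $\mat(K_4/e)$ has rank $2$, not $3$. Hence $\dim S = 5-2 = 3$, and $S$ needs \emph{three} generators; your normalized $2\times 5$ generator matrix cannot occur, since a $2$-dimensional subspace of $GF(q)^5$ has an underlying matroid of rank $3$ and so is never isomorphic to $\mat(K_4/e)$. The paper instead takes $v^1,v^2,v^3\in S$ whose supports are the fundamental circuits $\{1,4,5\},\{2,4\},\{3,5\}$ with respect to the spanning tree $\{4,5\}$, and normalizes to a $3\times 5$ matrix with entries $t,x,y,z,w\neq 0$.

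Beyond the dimension slip, your proposed endgame does not go through for this matroid: the \Cref{PR:U24}-style conclusion ``two of the distinguished coordinates of $d$ vanish, so $\supp(d)$ is too small to contain a circuit, hence $d=\mathbf{0}$'' fails here because $\mat(K_4/e)$ has circuits of size $2$ (the parallel classes $\{2,4\}$ and $\{3,5\}$), so a small support does not force $d=\mathbf{0}$. The paper's proof needs an extra idea you don't have: using $q>2$ (this is where $k\geq 2$ enters, essentially as you guessed) it chooses distinct nonzero $z\neq w$ and passes to the \emph{restriction} $S'$ of $S$ with $x_1\in\{0,z,w\}$, $x_2\in\{0,x\}$, $x_3\in\{0,ty\}$, takes $a:=zv^1$, $b:=wv^1$, $c:=xv^2+yv^3$ (the triple condition~\eqref{triple-condition} is witnessed at coordinates $4,5,3$, not at coordinate $1$), and then exhaustively lists the nine points of $S'-\{a,b,c\}$ to check that none satisfies condition (1) of \Cref{q-space:triple}; the contradiction is that no admissible $d$ exists at all, not that $d=\mathbf{0}$ yields an algebraic identity. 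Finally, $\mult(S')$ being a minor of $\mult(S)$ via \Cref{projection} finishes the argument. So while your high-level plan (normalize a representation, exhibit a triple for \Cref{q-space:triple}, contradict) is the right family of ideas, the restriction step and the exhaustive check are the actual content of the proof, and as written your sketch both starts from a wrong rank computation and relies on a contradiction mechanism that this matroid does not support.
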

\begin{proof}
	In Figure~\ref{fig:K4/e}, we can see that the fundamental circuits of $K_4/e$ with respect to spanning tree $\{4,5\}$ are $\{1,4,5\}$, $\{2,4\}$, $\{3,5\}$. Pick vectors $v^1,v^2,v^3\in S$ whose supports are the three circuits. Notice that these vectors are linearly independent. Since the dimension of $S$ is $5-2=3$, vectors $v^1,v^2,v^3$ generate $S$. After elementary row operations, $S$ is generated by the 3 vectors $v^1$, $v^2$, $v^3$ of the following forms:
	\[
	\begin{matrix}
	v^1\\
	v^2\\
	v^3
	\end{matrix}
	\left[
	\begin{array}{ccc|cc}
	1 & 0 & 0 & x & y\\
	0 & 1 & 0 & z & 0\\
	0 & 0 & t & 0 & w
	\end{array}
	\right]
	\]
	where $t,x,y,z,w\neq 0$. Since $q>2$, we may assume that $z$ and $w$ are distinct nonzero elements in $GF(q)$. Now consider the restriction $S^\prime$ of $S$ defined as follows:
	\[
	S^\prime:=S\cap\left\{x\in GF(q)^5:\;x_1\in\{0,z,w\},\;x_2\in\{0,x\},\;x_3\in\{0,ty\}\right\}.
	\]
	We will show that $\mult(S^\prime)$ has $\Delta_3$ as a minor. Then as $S^\prime$ is a restriction of $S$, it follows from \Cref{projection} that $\mult(S)$ also has $\Delta_3$ as a minor. Notice that 
	\[
	S^\prime=\left\{\sum_{i=1}^3\lambda_iv^i:\; \lambda_1\in\{0,z,w\},\;\lambda_2\in\{0,x\},\;\lambda_3\in\{0,y\}\right\}.
	\]
	Consider three distinct points $a:=zv^1$, $b:=wv^1$, $c:=xv^2+yv^3$ in $S^\prime$:
	\[
	\begin{matrix}
	a\\
	b\\
	c
	\end{matrix}
	\left[
	\begin{array}{ccc|cc}
	z & 0 & 0 & zx & zy\\
	w & 0 & 0 & wx & wy\\
	0 & x & ty & zx & wy
	\end{array}
	\right]
	\]
	As $z\neq w$, we have that $c_4=a_4\neq b_4$ and $b_5=c_5\neq a_5$. We also have $a_3=b_3\neq c_3$, because $ty\neq 0$. Suppose for a contradiction that $\mult(S^\prime)$ has no $\Delta_3$ as a minor. By \Cref{q-space:triple}, there is $d\in S^\prime-\{a,b,c\}$ that satisfies
	\begin{enumerate}[(1)]
		\item $d_1\in\{0,z,w\}$, $d_2\in\{0,x\}$, $d_3\in\{0,ty\}$, $d_4\in\{zx,wx\}$, $d_5\in\{zy,wy\}$, and
		\item at least two of $d_3=ty$, $d_4=wx$, $d_5=zy$ hold.
	\end{enumerate}
	The points of $S^\prime-\{a,b,c\}$ are the following:
	\[
	S^\prime-\{a,b,c\}=\left\{
	\begin{array}{c|c|c}
	(0,0,0,0,0)  & (0, x, 0, zx, 0) & (0,0,ty,0,wy)\\
	(z,x,0,0,zy) & (z,0,ty,zx,(z+w)y) & (w,x,0,(z+w)x,wy)\\
	(w,0,ty,wx,0) & (z, x, ty, 0, (z+w)y) & (w, x, ty, (z+w)x, 0)\\
	\end{array}
	\right\}.
	\]
	Since $z,w\neq 0$ and $z\neq w$, $(z+w)x\not\in \{zx,wx\}$ and $(z+w)y\not\in \{zy,wy\}$. Since $z,w,x,y\neq 0$, $0\not\in \{zx,wx\}$ and $0\not\in \{zy,wy\}$. This indicates that no point in $S^\prime-\{a,b,c\}$ satisfies condition (1), a contradiction. Therefore, $\mult(S^\prime)$ has $\Delta_3$ as a minor, and so does $\mult(S)$, as required.
\end{proof}

How does a graph with no $K_4/e$ graph minor look like? We have the following result. 

\begin{LE}\label{PR:graph-K4/e}
	Let $G=(V,E)$ be a connected graph. If $G$ contains no $K_4/e$ as a graph minor, then each block of $G$ is a bridge, a circuit, or a subdivision of $A_t$ for some $t\geq 3$.
\end{LE}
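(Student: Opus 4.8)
The plan is to reduce to the $2$-connected case and then induct, peeling off one ear at a time. Since every block of $G$ is either a bridge (a copy of $K_2$, already on the list) or a $2$-connected subgraph of $G$, and a subgraph is in particular a minor, every block of $G$ again has no $K_4/e$ minor. So it suffices to prove: \emph{every $2$-connected graph $H$ with no $K_4/e$ minor is a circuit or a subdivision of $A_t$ for some $t\geq 3$.} I would prove this by induction on $|E(H)|$. If $H$ is a circuit we are done; otherwise $H$ is $2$-connected and not a circuit, so it admits an ear decomposition (a circuit followed by successively added ears) with at least one ear. Deleting the edges and internal vertices of the last ear $P$ leaves a $2$-connected graph $H'$ with strictly fewer edges which, being a subgraph of $H$, has no $K_4/e$ minor. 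By the induction hypothesis $H'$ is a circuit or a subdivision of $A_t$ for some $t\geq 3$.

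If $H'$ is a circuit, then $H=H'\cup P$ is a cycle together with a path joining two of its vertices through new internal vertices, which is exactly a subdivision of $A_3$, and we are done. So assume $H'$ is a subdivision of $A_t$ with $t\geq 3$, with hub vertices $u,v$ and internally disjoint $uv$-paths $Q_1,\dots,Q_t$, and let $a\neq b$ be the endpoints of $P$ in $V(H')$. If $\{a,b\}=\{u,v\}$, then $P$ is one more $uv$-path internally disjoint from $Q_1,\dots,Q_t$, so $H$ is a subdivision of $A_{t+1}$ and again we are done. The heart of the proof is to show that any other placement of $a,b$ forces a $K_4/e$ minor in $H$, contradicting the hypothesis.

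Each of $a,b$ is either a hub or an interior vertex of a unique $Q_i$, so after using the symmetry $u\leftrightarrow v$ and relabelling the paths, the remaining configurations are: (a) $a=u$ and $b$ interior to $Q_1$; (b) $a$ interior to $Q_1$ and $b$ interior to $Q_2$; (c) $a,b$ both interior to $Q_1$. In each case I would exhibit $K_4/e$ (or $K_4$, which has $K_4/e$ as a minor) as a minor of the subgraph $Q_1\cup Q_2\cup Q_3\cup P$; here $t\geq 3$ is used to ensure that a third path $Q_3$ is available. The recipe is uniform: delete the paths $Q_4,\dots,Q_t$, contract each of $Q_1,Q_2,Q_3$ down to edges (splitting $Q_1$ at $a$ and/or $b$, and $Q_2$ at $b$ in case (b), before contracting each resulting subpath to a single edge), contract $P$ to a single edge, and contract one more edge if needed. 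Explicitly: case (a) yields on $\{u,b,v\}$ two $uv$-edges, two $ub$-edges and one $bv$-edge, i.e.\ $K_4/e$; case (b) yields $K_4$ on $\{u,a,b,v\}$; and case (c) yields on $\{u,a,b,v\}$ the multigraph with edges $uv,uv,ua,ab,ab,bv$, after which contracting $ua$ gives $K_4/e$. Hence none of (a)--(c) can occur, so $\{a,b\}=\{u,v\}$, $H$ is a subdivision of $A_{t+1}$, and the induction is complete.

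The conceptual content is just that, after the first ear, every further ear of a $2$-connected $K_4/e$-minor-free graph must join the two hubs of the generalized theta graph built so far. The only genuine work --- which I expect to be the main obstacle, although it is entirely routine --- is the bookkeeping in the three explicit minor computations: one must check that the paths being contracted are pairwise edge-disjoint and meet only in the intended vertices, so that the prescribed contractions are legitimate and leave precisely the stated multigraph.
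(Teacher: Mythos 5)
Your proof is correct, and its skeleton ultimately coincides with the paper's: in both arguments the inductive step reduces to a graph of the form ``subdivision of $A_t$ (or a circuit) plus one attached path $P$'', and the heart of the matter is the same three-way case analysis showing that unless $P$ joins the two hub vertices $u,v$, one exhibits a $K_4/e$ minor (your configurations (a), (b), (c) are exactly the paper's Claim~5 cases, with the same minor computations). Where you genuinely diverge is the reduction mechanism. The paper inducts by deleting an arbitrary edge $e$ and must then control the block structure of $G-\{e\}$: it proves that the block tree of $G-\{e\}$ is a path with $e$ joining internal vertices of the two end blocks, and that at most one block of $G-\{e\}$ is not a bridge (the latter requiring an extra $K_4/e$ argument of its own), before it can reassemble $G$ as ``one non-trivial block plus a path''. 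You instead delete the last ear of an open ear decomposition, which keeps the graph $2$-connected and hands you the ``$H'$ plus attached path'' picture immediately, so the paper's Claims~1--3 disappear and one forbidden-minor argument is saved. The price is that you invoke the ear-decomposition theorem, and since the relevant graphs here are multigraphs ($A_t$ and $K_4/e$ have parallel edges), you should note that you are using it in the loopless multigraph setting, where the initial cycle may have length $2$ and an ear may be a single parallel edge; this is standard and causes no difficulty, and your remaining bookkeeping (the subpaths being contracted are edge-disjoint and meet only in the intended vertices, since $Q_1,\ldots,Q_t$ are internally disjoint and the internal vertices of $P$ are new) is as routine as you say.
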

\begin{proof}
See \S\ref{sec:graph-K4/e} in the appendix.
\end{proof}

We call a graph a {\it series-parallel network} if each of its blocks is a series-parallel graph.

\begin{theorem}[\cite{Brylawski71}]\label{Brylawski}
	Let $M$ be a matroid. Then the following statements are equivalent:
	\begin{enumerate}[(i)]
		\item $M$ contains none of $U_{2,4}$ and $\mat(K_4)$ as a matroid minor,
		\item $M$ is the cycle matroid of a series-parallel network.
	\end{enumerate}
\end{theorem}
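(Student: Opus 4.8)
This is a classical result of Brylawski; the plan is to derive it from Tutte's two excluded-minor theorems, in the same minor-theoretic spirit as the rest of the paper.

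First I would dispatch \textbf{(ii)$\Rightarrow$(i)}. If $M=\mat(G)$ for a series-parallel network $G$, then $M$ is graphic, hence has no $U_{2,4}$ minor since $U_{2,4}$ is not even binary. For the other excluded minor, I would use that every matroid minor of $\mat(G)$ is realized by a graph minor, i.e. $\mat(G)\setminus D/C=\mat(G\setminus D/C)$; so if $\mat(K_4)$ were a minor of $M$ then $\mat(K_4)=\mat(G')$ for some minor $G'$ of $G$, and since $\mat(K_4)$ is $3$-connected it has a unique graph representation (Whitney), forcing $G'\cong K_4$ up to isolated vertices and hence a $K_4$ minor in $G$ --- impossible, because each block of $G$ is series-parallel, i.e.\ $K_4$-minor-free.

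For \textbf{(i)$\Rightarrow$(ii)} I would argue as follows. By Tutte's excluded-minor characterization of binary matroids, having no $U_{2,4}$ minor makes $M$ binary. By Tutte's excluded-minor characterization of graphic matroids among binary matroids, a binary $M$ that fails to be graphic must have one of $F_7$, $F_7^\ast$, $\mat^\ast(K_5)$, $\mat^\ast(K_{3,3})$ as a minor. The key observation is that each of these four has $\mat(K_4)$ as a minor, so the hypothesis rules them out: $F_7\setminus e\cong\mat(K_4)$ for every element $e$; $F_7^\ast/e\cong(F_7\setminus e)^\ast\cong\mat(K_4)^\ast\cong\mat(K_4)$, using that $K_4$ is self-dual as a plane graph; and $\mat^\ast(K_5)$, $\mat^\ast(K_{3,3})$ have $\mat^\ast(K_4)\cong\mat(K_4)$ as a minor because $\mat(K_5)$, $\mat(K_{3,3})$ have $\mat(K_4)$ as a minor ($K_5$ minus a vertex is $K_4$; $K_{3,3}$ is $3$-regular, hence not series-parallel, hence has a $K_4$ minor) and duality interchanges deletion and contraction. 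Thus $M=\mat(G)$ is graphic, and finally $G$ has no $K_4$ minor --- a $K_4$ minor of $G$ would give a $\mat(K_4)$ minor of $M$ --- so every block of $G$ is series-parallel, i.e.\ $G$ is a series-parallel network.

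The main obstacle is not in this chain of implications, which is short, but in the auxiliary results it invokes --- the two Tutte excluded-minor theorems and the uniqueness of the graph of a $3$-connected graphic matroid; these are standard and recorded in~\cite{Oxley11}. If one prefers a route that does not quote Tutte's graphic-matroid theorem, the alternative I would fall back on is induction on $|E(M)|$: reduce to $M$ connected and without loops or coloops; invoke Tutte's Wheels-and-Whirls theorem to see that a $3$-connected matroid on at least four elements has either $\mat(K_4)$ as a minor (it is a wheel, and every wheel has the rank-$3$ wheel $\mat(K_4)$ as a minor) or $U_{2,4}$ as a minor (it is a whirl, hence non-binary; or, via the inductive step, it has a $3$-connected single-element deletion or contraction, and the only $3$-connected matroid on four elements is $U_{2,4}$); conclude $M$ is not $3$-connected, hence a $2$-sum $M_1\oplus_2 M_2$ of two strictly smaller minors of $M$, apply induction to each, and note that a $2$-sum of cycle matroids of series-parallel networks is again the cycle matroid of a series-parallel network, since a $3$-connected minor such as $\mat(K_4)$ cannot straddle a $2$-separation. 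The delicate bookkeeping there --- that the $2$-sum pieces are genuine, strictly smaller minors, and that graph $2$-sums preserve series-parallelness --- is where this second approach would need the most care.
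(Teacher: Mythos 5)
You are proving a statement that the paper itself does not prove: Theorem~\ref{Brylawski} is quoted from \cite{Brylawski71} and used as a black box, so any proof you give is necessarily a different route from the paper's. Your derivation is correct. The direction (ii)$\Rightarrow$(i) is fine: graphic matroids are binary (excluding $U_{2,4}$), matroid minors of $\mat(G)$ come from graph minors, and Whitney's $2$-isomorphism theorem for the $3$-connected matroid $\mat(K_4)$ forces an actual $K_4$ graph minor, which a series-parallel network cannot have. For (i)$\Rightarrow$(ii), the chain Tutte (non-binary $\Rightarrow$ $U_{2,4}$ minor), Tutte (binary non-graphic $\Rightarrow$ one of $F_7$, $F_7^\ast$, $\mat(K_5)^\ast$, $\mat(K_{3,3})^\ast$ as a minor), plus the observation that each of these four contains $\mat(K_4)$ as a minor, is sound; your verifications ($F_7\setminus e\cong\mat(K_4)$, duality plus self-duality of the plane graph $K_4$, and $K_4$ minors in $K_5$ and $K_{3,3}$) all check out, though for $K_{3,3}$ it is cleaner to exhibit the $K_4$ minor by two edge contractions rather than argue through ``$3$-regular hence not series-parallel,'' which quietly invokes the same Dirac--Duffin characterization you also need at the very end. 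That final step --- a $2$-connected graph with no $K_4$ minor is series-parallel, hence $G$ with no $K_4$ minor is a series-parallel network --- should be cited explicitly as Dirac's (or Duffin's) theorem, since it carries real content; with that reference in place the argument is complete. Your fallback sketch via Wheels-and-Whirls and $2$-sums is the route closer to textbook treatments (e.g.\ \cite{Oxley11}), but it needs the bookkeeping you yourself flag; the first route is shorter and entirely adequate given that Tutte's theorems are quotable. What the paper's citation buys is brevity; what your argument buys is a self-contained reduction to results already standard in the matroid-theory toolkit the paper assumes.
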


\begin{theorem}\label{LE:no U24 K4/e}
	Let $q=2^k$ for some $k\geq 2$, and let $S$ be a vector space over $GF(q)$. If $\mult(S)$ has no $\Delta_3$ as a minor, then for some $k\geq 1$, $\mat(S)=M_1\oplus \cdots\oplus M_k$, where each $M_i$ is the cycle matroid of a bridge, a circuit, or a subdivision of $A_t$ for some $t\geq 3$.
\end{theorem}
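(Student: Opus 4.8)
The plan is to combine the structural lemmas already established in this section with Brylawski's theorem (\Cref{Brylawski}) and the graph-theoretic description of $K_4/e$-minor-free graphs (\Cref{PR:graph-K4/e}). First I would show that $\mat(S)$ contains neither $U_{2,4}$ nor $\mat(K_4)$ as a matroid minor. Suppose $\mat(S)$ had $U_{2,4}$ as a minor. By \Cref{LE:matroid-minor}, this minor equals $\mat(S')$ for some vector space $S'\subseteq GF(q)^4$ obtained from $S$ by restricting and dropping coordinates, i.e.\ $S'$ is a projection of a restriction of $S$ (and hence, by \Cref{projection} applied twice, $\mult(S')$ is a minor of $\mult(S)$). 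But $\mat(S')\cong U_{2,4}$, so \Cref{PR:U24} gives that $\mult(S')$, and therefore $\mult(S)$, has a $\Delta_3$ minor — contradiction. Similarly, if $\mat(S)$ had $\mat(K_4)$ as a matroid minor, then since $\mat(K_4/e)$ is a matroid minor of $\mat(K_4)$ (contract one edge), $\mat(S)$ would have $\mat(K_4/e)$ as a matroid minor; again by \Cref{LE:matroid-minor} this equals $\mat(S')$ for some five-element vector space $S'$, and \Cref{PR:K4/e} (using $q=2^k$, $k\geq 2$) produces a $\Delta_3$ minor of $\mult(S')$, hence of $\mult(S)$ — contradiction.

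Next I would invoke \Cref{Brylawski}: having excluded $U_{2,4}$ and $\mat(K_4)$ as matroid minors, $\mat(S)$ is the cycle matroid of a series-parallel network $G$. Write $G_1,\dots,G_k$ for the blocks of $G$; by \Cref{direct-sum}(1), $\mat(S)=\mat(G_1)\oplus\cdots\oplus\mat(G_k)$, so it remains to argue that each block $G_i$ is a bridge, a circuit, or a subdivision of $A_t$ for some $t\geq 3$. For this I would note that none of the $G_i$ can contain $K_4/e$ as a graph minor: if some $G_i$ did, then $\mat(K_4/e)$ would be a matroid minor of $\mat(G_i)$, hence of $\mat(S)=\mat(G)$, hence of $\mat(K_4/e)\preceq\mat(G)$ implies (via \Cref{LE:matroid-minor} again) that $\mult(S)$ has a $\Delta_3$ minor, a contradiction. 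Each block $G_i$ is $2$-connected (or a bridge); applying \Cref{PR:graph-K4/e} to $G_i$ — or rather observing that a $2$-connected graph with no $K_4/e$ minor is a circuit or a subdivision of $A_t$ — finishes the structural description. Setting $M_i:=\mat(G_i)$ gives the claimed decomposition.

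One point needs a little care: \Cref{LE:matroid-minor} states that matroid minors of $\mat(S)$ correspond to vector spaces obtained by a combined delete-then-contract operation, so I should make sure the correspondence with restrictions/projections of $S$ (and hence clutter minors of $\mult(S)$ via \Cref{projection}) is spelled out cleanly: a matroid deletion corresponds to intersecting $S$ with a coordinate hyperplane $\{x_i=0\}$ and dropping that coordinate, which on the clutter side is a restriction then a contraction; a matroid contraction corresponds to dropping a coordinate, which is a projection, hence a clutter contraction. Composing these, any matroid minor $\mat(S)\setminus I/J = \mat(S')$ has $\mult(S')$ a minor of $\mult(S)$, so a $\Delta_3$ minor of $\mult(S')$ lifts to a $\Delta_3$ minor of $\mult(S)$.

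The main obstacle I anticipate is the bookkeeping around \Cref{PR:graph-K4/e}: strictly, that lemma is stated for connected graphs and concludes a statement about all blocks at once, but here I want to apply the block-level conclusion to $\mat(S)$ directly, so I should either invoke \Cref{PR:graph-K4/e} on $G$ itself (which immediately yields that each block is a bridge, a circuit, or a subdivision of $A_t$) rather than block-by-block, or check that the $2$-connected pieces inherit $K_4/e$-freeness. Using \Cref{PR:graph-K4/e} on $G$ directly is cleanest, and then \Cref{direct-sum}(1) packages the blocks into the direct-sum decomposition. Everything else is a direct chaining of the cited results.
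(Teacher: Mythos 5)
Your proposal is correct and follows essentially the same route as the paper: exclude $U_{2,4}$ and $\mat(K_4/e)$ (hence $\mat(K_4)$) as matroid minors via \Cref{LE:matroid-minor}, \Cref{projection}, \Cref{PR:U24}, and \Cref{PR:K4/e}, then apply \Cref{Brylawski} together with \Cref{PR:graph-K4/e} and package the blocks with \Cref{direct-sum}. The points you flag (the restriction/projection bookkeeping and applying \Cref{PR:graph-K4/e} to the whole graph rather than block-by-block) are handled the same way in the paper and introduce no gap.
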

\begin{proof}
	Assume that $\mult(S)$ has no $\Delta_3$ as a minor.  Suppose for a contradiction that $\mat(S)$ contains  $U_{2,4}$ or $\mat(K_4/e)$ as a matroid minor. This in turn implies that there exists $S^\prime$ obtained from $S$ after a series of restrictions and projections such that $\mat(S^\prime)$ is isomorphic to $U_{2,4}$ or $\mat(K_4/e)$ by \Cref{LE:matroid-minor}. Here, $\mult(S^\prime)$ contains $\Delta_3$ as a minor by Lemmas \ref{PR:U24} and~\ref{PR:K4/e}.  As $\mult(S^\prime)$ is a minor of $\mult(S)$ due to \Cref{projection}, it follows that $\mult(S)$ also contains a $\Delta_3$ as a minor, a contradiction. Hence, $\mat(S)$ contains  none of $U_{2,4}$ and $\mat(K_4/e)$ as a matroid minor.
	As $\mat(K_4/e)$ is a matroid minor of $\mat(K_4)$, \Cref{Brylawski} implies that $\mat(S)$ is the cycle matroid of a series-parallel network not containing $K_4/e$ as a graph minor. Then by \Cref{PR:graph-K4/e}, each block of the graph is a subdivision of $A_t$ for some $t\geq 3$, a bridge, or a circuit. 
	So, the assertion follows from \Cref{direct-sum}, as required.
\end{proof}

Suppose $S$ is a vector space over $GF(2^k)$ for some $k\geq 2$.
By \Cref{LE:no U24 K4/e}, if $\mult(S)$ has no $\Delta_3$ as a minor, then the underlying matroid can be decomposed as the direct sum of some structured graphic matroids. Then it follows from \Cref{almost-disj-} that $S$ can be represented as $S=S_1\times\cdots\times S_k$ where each $S_i$ has dimension at most~1, or admits a sunflower basis. Then the idealness of $S$ is determined by $S_1,\ldots, S_k$ according to \Cref{set-product}. In particular, we need to understand the case where $S_i$ is a vector space that admits a sunflower basis. In the next section, we provide tools for characterizing when the multipartite uniform clutter of a vector space that admits a sunflower basis is ideal.

\section{Fields of characteristic $2$: a study of the localizations for $A_t$}\label{sec:p=2-At}

Suppose $S$ is a vector space over $GF(2^k)$ for $k\geq 2$.
At the end of \Cref{sec:p=2-structure}, we discussed that understanding vector spaces that admit a sunflower basis is the key to characterizing when $\mult(S)$ is ideal. In this section, we consider the case when $\mat(S)=\mat(A_n)$ for some $n\geq 3$, where $A_n$ denotes the graph that consists of two vertices and $n$ parallel edges connecting them. Recall that by \Cref{idealness-local}, $\mult(S)$ is ideal if and only if all its localizations are ideal. In this section, we prove three lemmas on properties of localizations of $\mult(S)$.

\begin{LE}\label{RE:parallel-generators}
	Take an integer $n\geq 3$ and a prime power $q$, and let $S\subseteq GF(q)^n$ be a vector space over $GF(q)$. Then $\mat(S)=\mat(A_n)$ if and only if $S\cong \{x\in GF(q)^n:\;x_1+\cdots+x_n=0\}$.
\end{LE}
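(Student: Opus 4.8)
The plan is to prove the two directions separately, translating the statement into matroid language via \Cref{RE:supports-matroid} and then back. The key observation is that the vector space $T_0 := \{x \in GF(q)^n : x_1 + \cdots + x_n = 0\}$ has dimension $n-1$, is spanned by the $n-1$ vectors $e_i - e_{i+1}$ for $i \in [n-1]$ (where $e_i$ is the $i$-th standard basis vector), and every such generator has support of size $2$. More importantly, for any two distinct coordinates $i,j \in [n]$, the vector $e_i - e_j$ lies in $T_0$ and has support exactly $\{i,j\}$; and conversely, any nonzero $x \in T_0$ has $|\supp(x)| \geq 2$. Hence the circuits of $\mat(T_0)$ are precisely the $2$-element subsets of $[n]$, so $\mat(T_0) = \mat(A_n)$.

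For the ($\Leftarrow$) direction, suppose $S \cong T_0$. Since $\mat(S)$ depends only on the underlying matroid and is invariant under the coordinate-relabeling isomorphisms of \Cref{RE:relabel-elts} (which do not change supports of vectors up to permutation), we get $\mat(S) \cong \mat(T_0) = \mat(A_n)$, and since both are matroids on $n$ elements we can identify them. Actually, one must be slightly careful about what ``$\cong$'' means for vector spaces here — presumably it allows permuting coordinates and applying coordinatewise bijections $GF(q) \to GF(q)$ — but in any case the circuit structure (the minimal supports) is preserved up to relabeling, so $\mat(S) = \mat(A_n)$.

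For the ($\Rightarrow$) direction, assume $\mat(S) = \mat(A_n)$. Then $\rank(\mat(S)) = n-1$, so $\dim S = n - (n-1) = 1$... wait, that is wrong: $\dim S = n - \rank(\mat(S)) = n - (n-1) = 1$. Let me recompute: $A_n$ has two vertices and $n$ edges, a spanning tree has $1$ edge, so the cycle matroid has rank $1$, not $n-1$. Hence $\dim S = n - 1$. Every circuit of $\mat(A_n)$ is a pair of parallel edges, i.e.\ a $2$-element set, so by \Cref{RE:supports-matroid} the minimal nonzero supports in $S$ are exactly the $2$-element subsets of $[n]$. In particular, for each $i \in \{2,\ldots,n\}$ there is a vector $v^i \in S$ with $\supp(v^i) = \{1,i\}$; after scaling, write $v^i = e_1 + \alpha_i e_i$ with $\alpha_i \neq 0$. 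These $n-1$ vectors are linearly independent (their supports pairwise intersect only in coordinate $1$, and a vanishing linear combination forces each $\alpha_i$-coefficient, hence each coefficient, to be zero), so they form a basis of $S$. Now apply the coordinatewise bijection $f_i : x \mapsto \alpha_i^{-1} x$ on coordinate $i$ for $i \geq 2$ (and the identity on coordinate $1$); by \Cref{RE:relabel-elts} this gives $S \cong g(S)$ where $g(S)$ is spanned by $e_1 + e_i$ for $i = 2,\ldots,n$. Finally observe that $\langle e_1 + e_2, \ldots, e_1 + e_n \rangle = T_0$ when $q$ is a power of $2$ (since $e_1 + e_i = e_1 - e_i$ and these span the sum-zero hyperplane) — and more generally, up to a further rescaling one checks the span equals $T_0$; in the characteristic-$2$ setting that is the relevant case for this paper, and it is immediate. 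Hence $S \cong T_0$, as required. The main obstacle is the bookkeeping around what equivalence ``$\cong$'' is intended for vector spaces and making sure the rescaling lands exactly on $\{x : x_1 + \cdots + x_n = 0\}$ rather than merely a space projectively equivalent to it; but since the ambient field here has characteristic $2$, the signs are irrelevant and the identification is clean.
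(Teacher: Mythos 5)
Your argument is correct and follows essentially the same route as the paper's proof: both directions go through \Cref{RE:supports-matroid}, the dimension count $\dim S=n-1$, a basis of vectors with supports $\{1,i\}$, and a coordinatewise rescaling as in \Cref{RE:relabel-elts}. The only loose end is the sign outside characteristic $2$ (the lemma is stated for every prime power $q$): instead of $f_i(x)=\alpha_i^{-1}x$ followed by an unspecified ``further rescaling'', simply take $f_i(x)=-\alpha_i^{-1}x$, as the paper does, which sends your basis to $e_1-e_i$ for $i=2,\ldots,n$, whose span is exactly $\{x: x_1+\cdots+x_n=0\}$ in all characteristics.
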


\begin{proof} 
	Let $\{1,2,3\ldots,n\}$ denote the edge set of $A_n$. Then  $\{1,2\},\{1,3\},\ldots,$ $\{1,n\}$ are circuits of $\mat(A_n)$. {\bf ($\Leftarrow$)}: Let $\mathcal{S}$ be the clutter of the minimal supports of the points in $S-\left\{\mathbf{0}\right\}$. Then $\mathcal{S}=\left\{\{i,j\}:i\neq j\right\}$, so $\mat(S)=\mat(A_n)$ by \Cref{RE:supports-matroid}. {\bf ($\Rightarrow$)}: Since $\mat(S)=\mat(A_n)$, $S$ contains $n-1$ points $u^1,\ldots,u^{n-1}$ whose supports are $\{1,2\}$, $\{1,3\},\ldots,\{1,n\}$, respectively. Notice that $u^1,\ldots, u^{n-1}$ are linearly independent over $GF(q)$, so the dimension of $S$ is at least $n-1$. On the other hand, the dimension is less than $n$, because $S\neq GF(q)^n$. Thus, $S=\langle u^1,\ldots, u^{n-1}\rangle$. After scaling the $u^i$s, if necessary, we may assume that the first coordinate of each $u^i$ is 1. Hence, $u^1,\ldots, u^{n-1}$ are of the form displayed below (left), where $\lambda_1,\ldots,\lambda_{n-1}\in GF(q)-\{0\}$. Notice that $\{x\in GF(q)^n:\;x_1+\cdots+x_n=0\}=\langle v^1,\ldots,v^{n-1}\rangle$ where $v^1,\ldots,v^{n-1}$ are displayed below (right):
	\[
	\begin{matrix}
	u^1\\
	u^2\\
	\vdots\\
	u^{n-1}
	\end{matrix}
	\left[
	\begin{array}{ccccc}
	1 & \lambda_1 & 0 & \cdots & 0\\
	1 & 0 & \lambda_2 & \cdots & 0\\
	\vdots    & \vdots    &   \vdots  & \vdots & \vdots\\
	1 & 0 & 0 & \cdots & \lambda_{n-1}
	\end{array}
	\right]
	\qquad \begin{matrix}
	v^1\\
	v^2\\
	\vdots\\
	v^{n-1}
	\end{matrix}
	\left[
	\begin{array}{ccccc}
	1 &-1 & 0 & \cdots & 0\\
	1 & 0 & -1 & \cdots & 0\\
	\vdots    & \vdots    &   \vdots  & \vdots & \vdots\\
	1 & 0 & 0 & \cdots & -1
	\end{array}
	\right],
	\] implying in turn that
	$
	\{x\in GF(q)^n:\;x_1+\cdots+x_n=0\}=\left\{(x_1,-\lambda_1^{-1}x_2,-\lambda_2^{-1}x_3,\ldots,-\lambda_{n-1}^{-1}x_n):\;x\in S\right\}.
	$
	Therefore, $S\cong \{x\in GF(q)^n:\;x_1+\cdots+x_n=0\}$, as required.
\end{proof}

By \Cref{RE:parallel-generators}, we may focus on the set $$S=\{x\in GF(q)^n:\;x_1+\cdots+x_n=0\}$$
to understand vector spaces whose underlying matroids are $\mat(A_n)$.
Recall that a localization of $\mult(S)$ with respect to $\alpha\in GF(q)^n$, denoted $\local(S,\alpha)$, is the minor of $\mult(S)$ after contracting the elements corresponding to $\alpha$ (see \Cref{sec:cuboid}). $\mult(S)$ is defined over ground set $V_1\cup\cdots\cup V_n$ where each $V_i$ is a copy of $GF(q)$, and $\local(S,\alpha)$'s ground set is given by $U_1\cup\cdots U_n$ where $U_i = V_i -\{\alpha_i\}$. 
The following lemma provides a characterization of the members of $\local(S,\alpha)$ for any $\alpha\not\in S$.

\begin{LE}\label{LE:hyperedges}
	Take an integer $n\geq 3$. Let $q$ be a power of 2, and let $\alpha\in GF(q)^n$ with $\sigma:=\alpha_1+\cdots+\alpha_n\neq 0$. Let $C\subseteq U_1\cup\cdots \cup U_n$ where $U_i=GF(q)-\{\alpha_i\}$. Then the following statements are equivalent:
	\begin{enumerate}[(i)]
		\item $C$ is a member of $\local(S,\alpha)$.
		\item  $C$ contains at most one element in $U_i$ for each $i\in[n]$ and $\sum(v:\;v\in C) = \sigma+\sum(\alpha_i:\;C\cap U_i\neq \emptyset)$.
	\end{enumerate} 
\end{LE}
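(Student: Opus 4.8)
The plan is to unwind the definitions of contraction and localization directly. Recall $S=\{x\in GF(q)^n : x_1+\cdots+x_n=0\}$, and a member of $\mult(S)$ is the set $\{x_1,\ldots,x_n\}$ for $x\in S$ (where $x_i$ lives in the copy $V_i$ of $GF(q)$). Contracting the elements $\alpha_1,\ldots,\alpha_n$ produces the clutter whose members are the \emph{inclusionwise minimal} sets among $\{x_1,\ldots,x_n\}\setminus\{\alpha_1,\ldots,\alpha_n\}$ over all $x\in S$. So for $x\in S$, the candidate set is $C_x := \{\,x_i : i\in[n],\ x_i\neq\alpha_i\,\}$, and $C_x$ sits inside $U_1\cup\cdots\cup U_n$ with $|C_x\cap U_i|\le 1$ automatically. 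The content of the lemma is: (a) identifying which $x\in S$ give an inclusionwise minimal $C_x$, and (b) reexpressing the constraint $x_1+\cdots+x_n=0$ in terms of the set $C_x$ and the "coordinate pattern'' $\{i : x_i=\alpha_i\}$.

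**Key steps.** First I would prove \textbf{(i)}$\Rightarrow$\textbf{(ii)}. Let $C$ be a member of $\local(S,\alpha)$, so $C$ is minimal among the $C_x$'s; pick $x\in S$ with $C=C_x$. Let $I=\{i\in[n] : x_i=\alpha_i\}$, so that $C\cap U_i\neq\emptyset$ exactly for $i\notin I$, and for $i\notin I$ the unique element of $C\cap U_i$ is $x_i$. Summing over all of $[n]$ in $GF(q)$: $0 = \sum_{i\in[n]} x_i = \sum_{i\in I}\alpha_i + \sum_{i\notin I} x_i = \sum_{i\in I}\alpha_i + \sum(v:v\in C)$. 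Since $\sigma=\sum_{i\in[n]}\alpha_i$ and the characteristic is $2$, $\sum_{i\in I}\alpha_i = \sigma + \sum_{i\notin I}\alpha_i = \sigma + \sum(\alpha_i : C\cap U_i\neq\emptyset)$, and substituting gives $\sum(v:v\in C) = \sigma + \sum(\alpha_i : C\cap U_i\neq\emptyset)$, which is \textbf{(ii)}. (The "at most one per $U_i$'' part is immediate from the form of $C_x$.)

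For \textbf{(ii)}$\Rightarrow$\textbf{(i)}, suppose $C\subseteq U_1\cup\cdots\cup U_n$ satisfies the counting identity and has $|C\cap U_i|\le 1$. Let $K=\{i : C\cap U_i\neq\emptyset\}$ and for $i\in K$ let $c_i$ be the unique element of $C\cap U_i$. Define $x\in GF(q)^n$ by $x_i=c_i$ for $i\in K$ and $x_i=\alpha_i$ for $i\notin K$. Then $\sum_{i\in[n]} x_i = \sum(v:v\in C) + \sum(\alpha_i: i\notin K) = \big(\sigma + \sum(\alpha_i:i\in K)\big) + \sum(\alpha_i:i\notin K) = \sigma + \sigma = 0$ in characteristic $2$, so $x\in S$, and by construction $C_x=C$ (note $c_i\in U_i$ means $c_i\neq\alpha_i$, so no cancellation occurs). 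Thus $C$ contains a member of $\local(S,\alpha)$. To finish I must show $C$ itself is a member, i.e. no \emph{proper} subset of $C$ is of the form $C_y$ for $y\in S$. But any proper subset $C'\subsetneq C$ with $|C'\cap U_i|\le1$ also satisfies its own counting identity iff it equals some $C_y$; and if $C'=C_y$ with $C'\subsetneq C$, then comparing the identities for $C$ and $C'$ forces (after cancellation, using characteristic $2$) a relation $\sum(v:v\in C\setminus C') = \sum(\alpha_i : i\in K\setminus K')$ where $K'$ indexes $C'$ — which is precisely the statement that $C'$ "should not'' have dropped those coordinates; I expect this to say $C\setminus C'=\emptyset$, a contradiction. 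I would make this precise by noting that for $i\in K\setminus K'$ one has $y_i=\alpha_i$, so $i$ contributes nothing to $C'$, while it contributed $c_i\neq\alpha_i$ to $C$; since both $x$ and $y$ lie in $S$ and agree with $C$ resp.\ $C'$, and minimality of members of a clutter is exactly the absence of such a proper containment, the cleanest route is: show directly that \emph{every} set satisfying \textbf{(ii)} arises as $C_x$ for the canonical $x$ above, and that $C_x\subseteq C_y$ with both satisfying \textbf{(ii)} forces $C_x=C_y$ — because removing any coordinate from the support changes the right-hand side of the identity by $\alpha_i$ and the left-hand side by $c_i\neq \alpha_i$, so the identity cannot hold simultaneously for a set and a proper subset unless they coincide.

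**Main obstacle.** The routine direction is the arithmetic bookkeeping in characteristic $2$ (which makes $\sigma=-\sigma$ and kills all the sign issues). The one genuinely delicate point is the minimality argument in \textbf{(ii)}$\Rightarrow$\textbf{(i)}: I must rule out that the canonical $x$ produces a $C_x$ that is \emph{not} minimal, i.e. that some other $y\in S$ gives $C_y\subsetneq C=C_x$. The key observation that resolves it is that \textbf{(ii)} is not just necessary but also \emph{sufficient} to be a minimal such set, precisely because the counting identity pins down, for each coordinate $i$, whether $i$ "must'' be dropped (when $x_i=\alpha_i$) — there is no freedom to drop an extra coordinate while preserving the identity. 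So after establishing the bijection "$x\in S \leftrightarrow C_x$ together with its coordinate pattern'', minimality becomes the statement that the identity in \textbf{(ii)} has no solution with strictly smaller support, which I would verify by the "change the identity by $\alpha_i$ versus $c_i$'' argument sketched above.
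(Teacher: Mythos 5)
Your direction (i)$\Rightarrow$(ii), and the construction of the canonical point $x$ in (ii)$\Rightarrow$(i), coincide with the paper's argument. The genuine problem is the minimality step you add at the end: the claim that the identity in (ii) cannot hold simultaneously for a set and a proper subset is false. Your argument only rules out dropping a \emph{single} coordinate (where the two sides of the identity change by $c_i$ and by $\alpha_i$, with $c_i\neq\alpha_i$); but if you drop two coordinates $i\neq j$, the identity changes by $(c_i+\alpha_i)+(c_j+\alpha_j)$, which vanishes in characteristic $2$ whenever $c_i+\alpha_i=c_j+\alpha_j\neq 0$. Concretely, take $q=4$, $n=3$, $\alpha=(0,0,1)$, so $\sigma=1$, and let $C$ consist of the element $a$ of $U_1$, the element $a$ of $U_2$, and the element $0$ of $U_3$; this is the set arising from $x=(a,a,0)\in S$, and it satisfies (ii) because $a+a+0=0=\sigma+\alpha_1+\alpha_2+\alpha_3$. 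Yet $C$ properly contains $\{0\}\subseteq U_3$, which is the size-one member $\{\alpha_3+\sigma\}$ coming from $(0,0,0)\in S$. So a set satisfying (ii) need not be inclusionwise minimal among the sets $\{x_1,\ldots,x_n\}-\{\alpha_1,\ldots,\alpha_n\}$, $x\in S$, and the ``no freedom to drop an extra coordinate'' observation on which your minimality argument rests is not available.

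For comparison, the paper's proof of (ii)$\Rightarrow$(i) stops exactly where your canonical construction stops: it exhibits $x\in S$ with $C=\{x_1,\ldots,x_n\}-\{\alpha_1,\ldots,\alpha_n\}$ and concludes that $C$ is a member, in effect reading ``member of $\local(S,\alpha)$'' without the minimality filter that the definition of contraction imposes. Where the lemma is later applied in that direction (\Cref{PR:edges of size 1 or 2} and \Cref{PR:parallel}), minimality is secured separately: the sets considered there avoid the elements $\alpha_i+\sigma$ and are explicitly checked not to contain smaller members. So your instinct that something must be said about minimality is sound, but the strengthening you try to prove (every set satisfying (ii) is minimal) is false, as the example shows; a fully rigorous rendering must either weaken (i) to ``$C=\{x_1,\ldots,x_n\}-\{\alpha_1,\ldots,\alpha_n\}$ for some $x\in S$'' or add to (ii) the condition that no proper subset of $C$ satisfies the same identity.
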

\begin{proof}
	{\bf (i)$\Rightarrow$(ii)} There exists $x=(x_1,\ldots,x_n)\in S$ such that $C=\{x_1,\ldots,x_n\}-\{\alpha_1,\ldots,\alpha_n\}$. Then $C\cap U_i=\{x_i\}-\{\alpha_i\}$, implying that $C\cap U_i$ has at most one element. Without loss of generality, we may assume that $x=(x_1,\ldots,x_k,\alpha_{k+1},\ldots,\alpha_n)$ and $x_1\neq \alpha_1,\ldots,x_k\neq \alpha_k$ for some $1\leq k\leq n$. Then $C=\{x_1,\ldots,x_k\}$. Since $x\in S$, we have
	\[
	\sum_{i=1}^nx_i=\sum_{i=1}^kx_i+\sum_{j=k+1}^n\alpha_j=0.
	\]
	As the characteristic of $GF(q)$ is 2, $\sum_{i=1}^kx_i=-\sum_{i=1}^kx_i$, implying in turn that $\sum_{i=1}^kx_i=\sum_{j=k+1}^n\alpha_j$. As $\sum_{i=1}^n\alpha_i=\sigma$, we also get $\sum_{j=k+1}^n\alpha_j=\sigma+\sum_{i=1}^k\alpha_i$, and therefore, we obtain $\sum_{i=1}^kx_i=\sigma+\sum_{i=1}^k\alpha_i$, as required.
	
	{\bf (i)$\Leftarrow$(ii)} Without loss of generality, we may assume that $C=\{x_1,\ldots,x_k\}$ where $x_i\in U_i$ for $i\in[k]$. Then $\left\{x_1,\ldots,x_k\right\}\cap \left\{\alpha_1,\ldots,\alpha_n\right\}=\emptyset$. Since $\sum_{i=1}^kx_i=\sigma+\sum_{i=1}^k\alpha_i$, we have $\sum_{i=1}^kx_i+\sum_{j=k+1}^n\alpha_j=\sigma+\sum_{i=1}^n\alpha_i=0$, implying in turn that $(x_1,\ldots,x_k,\alpha_{k+1},\ldots,\alpha_n)\in S$. As $C=\{x_1,\ldots,x_k,\alpha_{k+1},\ldots,\alpha_n\}-\{\alpha_1,\ldots,\alpha_n\}$, it follows that $C$ is a member of $\local(S,\alpha)$, as required. 
\end{proof}

Using \Cref{LE:hyperedges}, we can show the following lemma providing a characterization of the members of size 1 and 2 in $\local(S,\alpha)$ for $\alpha\not\in S$.

\begin{LE}\label{PR:edges of size 1 or 2}
	Take an integer $n\geq 3$. Let $q$ be a power of 2, and let $\alpha\in GF(q)^n$ with $\sigma:=\alpha_1+\cdots+\alpha_n\neq 0$. Then the following statements hold:
	\begin{enumerate}[(1)]
		\item the members of size 1 of $\local(S,\alpha)$ are $\{\alpha_1+\sigma\},\ldots,\{\alpha_n+\sigma\}$. 
		\item the members of size 2 of $\local(S,\alpha)$ form a graph that consists of $\frac{q}{2}-1$ connected components $G_1,\ldots,G_{\frac{q}{2}-1}$ satisfying the following: for each $j=1,\ldots,\frac{q}{2}-1$,
		\begin{itemize}
			\item $G_j$'s vertex set is $\left\{\beta_1^j,\beta_1^j+\sigma\right\}\cup\cdots\cup\left\{\beta_n^j,\beta_n^j+\sigma\right\}$ where $\left\{\beta_i^j,\beta_i^j+\sigma\right\}\subseteq U_i- \{\alpha_i+\sigma\}=GF(q)-\{\alpha_i,\alpha_i+\sigma\}$ for $i\in[n]$,
			\item $G_j$ is a bipartite graph with bipartition $\left\{\beta_1^j,\ldots,\beta_n^j\right\}\cup \left\{\beta_1^j+\sigma,\ldots,\beta_n^j+\sigma\right\}$,
			\item $\beta_i^j=\beta_1^j+\alpha_1+\alpha_i$ for $i\in[n]$, and
			\item $G_j$'s edge set is $\left\{\left\{\beta_i^j,\beta_k^j+\sigma\right\}:\;i\neq k\right\}$, i.e., $G_j$ is obtained from a complete bipartite graph after removing the edges of a perfect matching (see Figure~\ref{fig:edges of size 1,2} for an illustration).
		\end{itemize}
	\end{enumerate}
	
\end{LE}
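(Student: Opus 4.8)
The plan is to read off both parts directly from \Cref{LE:hyperedges}, which reduces ``$C$ is a member of $\local(S,\alpha)$'' to a single linear condition over $GF(q)$, and then to do a short check that the sets so produced are genuinely inclusion-minimal. Throughout I write $U_i=GF(q)\setminus\{\alpha_i\}$; since $\sigma\neq 0$ we have $\alpha\notin S$, hence $\emptyset\notin\local(S,\alpha)$. For part~(1): a member of size~$1$ is a singleton $\{v\}$ with $v$ in the copy $U_i$ for a unique $i$, and \Cref{LE:hyperedges}(ii) collapses to $v=\sigma+\alpha_i$. As $\sigma\neq 0$ we get $\alpha_i+\sigma\in U_i$, and conversely $\{\alpha_i+\sigma\}$ satisfies (ii); since the only proper subset of a singleton is $\emptyset$, which is not a member, each such set is a member. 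Hence the size-$1$ members are exactly $\{\alpha_1+\sigma\},\dots,\{\alpha_n+\sigma\}$.

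For part~(2), first I pin down the edges. A set $C$ of size~$2$ that meets each $U_i$ at most once must meet two distinct copies, say $C=\{v,v'\}$ with $v\in U_i$, $v'\in U_k$, $i\neq k$, and \Cref{LE:hyperedges}(ii) gives $v+v'=\sigma+\alpha_i+\alpha_k$. Here the characteristic-$2$ arithmetic does the work: $v\neq\alpha_i$ forces $v'\neq\alpha_k+\sigma$ and $v'\neq\alpha_k$ forces $v\neq\alpha_i+\sigma$, so by part~(1) neither $\{v\}$ nor $\{v'\}$ is a member, whence $C$ is inclusion-minimal and therefore a member. Conversely, for any $i\neq k$ and any $v\in GF(q)\setminus\{\alpha_i,\alpha_i+\sigma\}$ (viewed in $U_i$), the element $v':=\sigma+\alpha_i+\alpha_k+v$ lies in $GF(q)\setminus\{\alpha_k,\alpha_k+\sigma\}$ and $\{v,v'\}$ satisfies (ii). So the size-$2$ members are precisely the pairs $\{v,v'\}$ with $v\in U_i\setminus\{\alpha_i+\sigma\}$, $v'\in U_k\setminus\{\alpha_k+\sigma\}$, $i\neq k$, and $v+v'=\sigma+\alpha_i+\alpha_k$.

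The component structure then follows from the reparametrization that, for $v$ in the copy $U_i$, sends $v\mapsto\phi(v):=v+\alpha_i$. Then $v$ ranges over $U_i\setminus\{\alpha_i+\sigma\}$ exactly when $\phi(v)$ ranges over $GF(q)\setminus\{0,\sigma\}$, and the edge condition becomes the translation-invariant relation $\phi(v)+\phi(v')=\sigma$. Now $x\mapsto x+\sigma$ is a fixed-point-free involution of $GF(q)\setminus\{0,\sigma\}$, partitioning it into $\frac{q}{2}-1$ pairs $\{\gamma^1,\gamma^1+\sigma\},\dots,\{\gamma^{q/2-1},\gamma^{q/2-1}+\sigma\}$; set $\beta_i^j:=\gamma^j+\alpha_i$. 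An edge $\{v,v'\}$ has $\phi(v)+\phi(v')=\sigma$, so $\phi(v),\phi(v')$ lie in a common pair $j$, and then $\{v,v'\}=\{\beta_i^j,\beta_k^j+\sigma\}$ for some $i\neq k$; conversely every such pair is an edge. This yields at once: the graph is the disjoint union of graphs $G_1,\dots,G_{q/2-1}$; $G_j$ has vertex set $\bigcup_i\{\beta_i^j,\beta_i^j+\sigma\}$ with $\{\beta_i^j,\beta_i^j+\sigma\}\subseteq U_i\setminus\{\alpha_i+\sigma\}$ since $\gamma^j\notin\{0,\sigma\}$; the bipartition $\{\beta_i^j\}_i\cup\{\beta_i^j+\sigma\}_i$ is respected by all edges; $\beta_i^j=\beta_1^j+\alpha_1+\alpha_i$ (characteristic $2$); and $G_j$ is $K_{n,n}$ with the perfect matching $\{\{\beta_i^j,\beta_i^j+\sigma\}\}_i$ removed. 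The $G_j$ are pairwise vertex-disjoint because the pairs partition $GF(q)\setminus\{0,\sigma\}$ and hence, after translation by $\alpha_i$, the sets $\{\beta_i^j,\beta_i^j+\sigma\}$ partition $U_i\setminus\{\alpha_i+\sigma\}$; and each $G_j$ is connected since, for $n\geq 3$, any two vertices on the same side of the bipartition have a common neighbour.

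There is no deep difficulty here, only bookkeeping, but two points deserve attention. The first is the minimality check in part~(2): one must verify that a pair satisfying \Cref{LE:hyperedges}(ii) does not properly contain a size-$1$ member, which is exactly the characteristic-$2$ identity ``$v=\alpha_i+\sigma\Rightarrow v'=\alpha_k$'' showing this is impossible for a legitimate pair in $U_i\times U_k$. The second is the choice of reparametrization $\phi(v)=v+\alpha_i$, which turns the edge condition into the translation-invariant relation $\phi(v)+\phi(v')=\sigma$ and thereby makes the component decomposition and the count $\frac{q}{2}-1$ transparent; with a less symmetric normalization the components would be considerably harder to identify.
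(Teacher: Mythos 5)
Your proof is correct and follows essentially the same route as the paper's: both apply \Cref{LE:hyperedges} to characterize the size-$1$ and size-$2$ members, rule out the elements $\alpha_i+\sigma$ from the latter, and then partition the remaining elements of each $U_i$ into $\sigma$-translate pairs to read off the bipartite components. Your normalization $\phi(v)=v+\alpha_i$ is only a cosmetic repackaging of the paper's relation $\beta_i^j=\beta_1^j+\alpha_1+\alpha_i$, so there is nothing substantively different to compare.
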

\begin{figure}[h!]
	\begin{center}
		\begin{tikzpicture}
		\draw[fill=black] (-5.3,1) circle (2pt);
		\draw[fill=black] (-4,1) circle (2pt);
		\draw[fill=black] (-2,1) circle (2pt);
		
		\draw[fill=black] (-0.3,0) circle (2pt);
		\draw[fill=black] (1,0) circle (2pt);
		\draw[fill=black] (3,0) circle (2pt);
		\draw[fill=black] (-0.3,2) circle (2pt);
		\draw[fill=black] (1,2) circle (2pt);
		\draw[fill=black] (3,2) circle (2pt);
		
		\node[-] (3) at (2,0) {$\cdots$};
		\node[-] (c) at (2,2) {$\cdots$};
		\node[-,label=below:{$\beta_1^j+\sigma$}] (1) at (-0.3,0) {};
		\node[-,label=above:{$\beta_1^j$}] (a) at (-0.3,2) {};
		\node[-,label=below:{$\beta_2^j+\sigma$}] (2) at (1,0) {};
		\node[-,label=above:{$\beta_2^j$}] (b) at (1,2) {};
		\node[-,label=below:{$\beta_n^j+\sigma$}] (4) at (3,0) {};
		\node[-,label=above:{$\beta_n^j$}] (d) at (3,2) {};
		\node[-,label=below:{$G_j$ for $j=1,\ldots,\frac{q}{2}-1$}] (0) at (1.5,-1) {};
		
		\node[-,label=below:{$\alpha_1+\sigma$}] (5) at (-5.3,1) {};
		\node[-,label=below:{$\alpha_2+\sigma$}] (6) at (-4,1) {};
		\node[-] (7) at (-3,1) {$\cdots$};
		\node[-,label=below:{$\alpha_n+\sigma$}] (8) at (-2,1) {};
		
		\path[thick] (1) edge (b);
		\path[thick] (1) edge (d);
		\path[thick] (2) edge (a);
		\path[thick] (2) edge (d);
		\path[thick] (4) edge (a);
		\path[thick] (4) edge (b);
		\end{tikzpicture}
		\caption{Members of size 1 and 2 of $\local(S,\alpha)$}\label{fig:edges of size 1,2}
	\end{center}
\end{figure}
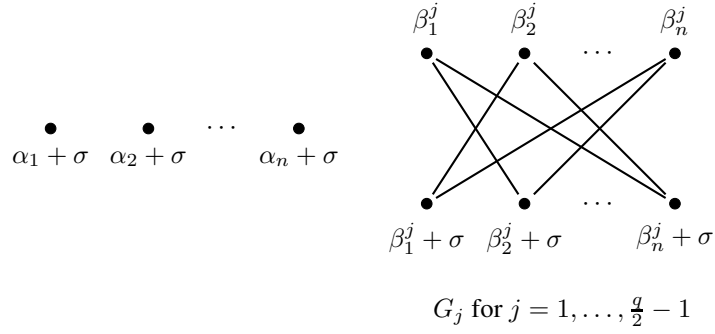 
\begin{proof}
See \S\ref{sec:proof-hyperedge-prop} of the appendix. 
\end{proof}

\section{The $q=4$ case}\label{sec:q=4}

In this section, we prove \Cref{q=4} characterizing when the multipartite uniform clutter of a vector space over $GF(4)$ is ideal. The proof of~\Cref{q=4} uses the following two lemmas. 
We first show \Cref{GF4-ideal} which implies that $\mult(T)$ is ideal if $T$ is a vector space over $GF(4)$ such that $\mat(T)\cong\mat(A_n)$ for some $n\geq 3$. We then prove in \Cref{RE:subdivision} that idealness is closed under ``series extensions".

\begin{LE}\label{GF4-ideal}
Let $T=\{x\in GF(4)^{n}:\;x_1+\cdots+x_n=0\}$ for some $n\geq 3$. Then $\mult(T)$ is ideal.
\end{LE}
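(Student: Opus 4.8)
The goal is to show that $\mult(T)$ is ideal for $T=\{x\in GF(4)^n:\;x_1+\cdots+x_n=0\}$. By \Cref{idealness-local}, it suffices to prove that every localization $\local(T,\alpha)$ is ideal. For $\alpha\in T$ the localization is $\{\emptyset\}$, which is trivially ideal, so the real work is the case $\sigma:=\alpha_1+\cdots+\alpha_n\neq 0$. Here the plan is to invoke \Cref{PR:edges of size 1 or 2} with $q=4$: then $\frac{q}{2}-1=1$, so the members of size $2$ of $\local(T,\alpha)$ form a \emph{single} connected component $G_1$, which is a complete bipartite graph minus a perfect matching on a bipartition $\{\beta_1,\ldots,\beta_n\}\cup\{\beta_1+\sigma,\ldots,\beta_n+\sigma\}$ (writing $\beta_i$ for $\beta_i^1$), and the members of size $1$ are exactly $\{\alpha_1+\sigma\},\ldots,\{\alpha_n+\sigma\}$. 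Since $GF(4)-\{\alpha_i,\alpha_i+\sigma\}=\{\beta_i,\beta_i+\sigma\}$ has exactly two elements, the ground set of $\local(T,\alpha)$ within each $U_i$ is $\{\alpha_i+\sigma,\beta_i,\beta_i+\sigma\}$, so every element of the ground set is accounted for by these size-$1$ and size-$2$ members. I will then argue there can be no member of size $\geq 3$: using \Cref{LE:hyperedges}, a purported member $C$ of size $k\geq 3$ must be minimal, but I expect to show it always contains one of the listed size-$1$ or size-$2$ members (the counting $|U_i|=3$ and the bipartite structure are rigid enough that any triple forces a smaller member). Hence $\local(T,\alpha)$ is a clutter whose members are the singletons $\{\alpha_i+\sigma\}$ together with the edges of $G_1$.

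Next I would simplify this clutter. Contracting or deleting the singleton elements $\alpha_i+\sigma$: in fact if $\{\alpha_i+\sigma\}$ is a member, then idealness of a clutter with a singleton member reduces to idealness of the clutter obtained by deleting that element (a singleton member means the element must be in every cover, equivalently $x_{\alpha_i+\sigma}=1$ is forced; more cleanly, a clutter with a member of size $1$ is ideal iff the clutter on the remaining ground set with that member removed is ideal). After removing all the $\alpha_i+\sigma$, what remains is precisely the graph $G_1$ viewed as a clutter of edges, namely the clutter of edges of $K_{n,n}$ minus a perfect matching. So the claim reduces to: \emph{the edge clutter of a bipartite graph is ideal}. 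This is classical — the edge clutter of any bipartite graph is ideal because its incidence matrix is the (interval/network) matrix of a bipartite graph, equivalently the clutter of edges of a graph is ideal iff the graph is bipartite (since odd circuits $C_{2k+1}$ are the excluded minors, and the edge clutter of a bipartite graph has no odd-circuit minor; one can also cite that edge-vs-vertex incidence matrices of bipartite graphs are totally unimodular, so $Q(\mathcal{C})$ is integral). I will cite the appropriate fact from the ideal-clutters literature rather than reprove it.

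The main obstacle I anticipate is the bookkeeping in the step ruling out members of size $\geq 3$ and confirming that \Cref{PR:edges of size 1 or 2} genuinely exhausts the member list of $\local(T,\alpha)$ when $q=4$. One has to be careful that a size-$3$ set satisfying the arithmetic condition of \Cref{LE:hyperedges}(ii) is never minimal; the cleanest route is probably: given such $C$ meeting $U_{i_1},U_{i_2},U_{i_3}$ in $x_{i_1},x_{i_2},x_{i_3}$, examine whether some two-element subset, say $\{x_{i_1},x_{i_2}\}$, is itself a member (an edge of $G_1$) or whether some $x_{i}=\alpha_i+\sigma$ is a singleton member — and since in $GF(4)$ each $U_i$ has only the three values $\{\alpha_i+\sigma,\beta_i,\beta_i+\sigma\}$, the value $x_i$ is either the singleton or lies in $\{\beta_i,\beta_i+\sigma\}$, and a short case check on the additive relation $\sum x_i=\sigma+\sum\alpha_i$ (restricted to the three coordinates, since the others equal $\alpha_j$) forces a smaller member. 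Once that rigidity is in hand, the rest is immediate from the classical bipartite-edge-clutter idealness.
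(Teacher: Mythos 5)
Your plan follows the paper's proof essentially step for step: localize via \Cref{idealness-local}, read off the size-$1$ and size-$2$ members from \Cref{PR:edges of size 1 or 2} with $q=4$, rule out larger members through the arithmetic criterion of \Cref{LE:hyperedges}, strip the singleton members, and finish with idealness of the edge clutter of a bipartite graph (the paper cites K\H{o}nig's theorem, you cite total unimodularity; both are fine, as is your reduction for singleton members, since in a clutter no other member contains the element of a size-$1$ member).

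The one place where the proposal is not yet a proof is the step you yourself flag: excluding members of size at least $3$. As sketched, your case check only treats members of size exactly $3$ --- you examine a triple $x_{i_1},x_{i_2},x_{i_3}$ and evaluate the sum condition ``restricted to the three coordinates, since the others equal $\alpha_j$'', which presupposes $|C|=3$. Ruling out size-$3$ members does not rule out minimal members of size $4,5,\dots$, because a member of size $k>3$ need not contain any size-$3$ subset satisfying the condition of \Cref{LE:hyperedges}. The argument has to be run for arbitrary $k\geq 3$, and this is where the structure gets used in full: if $C$ contains no singleton member, then every element of $C$ lies in some $\{\beta_i,\beta_i+\sigma\}$ (since $U_i-\{\alpha_i+\sigma\}=\{\beta_i,\beta_i+\sigma\}$ when $q=4$); if moreover $C$ contains no edge of $G_1$, it cannot contain $\beta_i$ and $\beta_k+\sigma$ with $i\neq k$, and it meets each $U_i$ at most once, so $C\subseteq\{\beta_1,\dots,\beta_n\}$ or $C\subseteq\{\beta_1+\sigma,\dots,\beta_n+\sigma\}$. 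Substituting $\beta_i=\beta_1+\alpha_1+\alpha_i$ into the full relation $\sum(v:v\in C)=\sigma+\sum(\alpha_i:\,C\cap U_i\neq\emptyset)$ gives $|C|\cdot(\beta_1+\alpha_1)=\sigma$, respectively $|C|\cdot(\beta_1+\alpha_1+\sigma)=\sigma$; in characteristic $2$ the left-hand side is $0$ when $|C|$ is even, contradicting $\sigma\neq 0$, and when $|C|$ is odd it forces $\beta_1=\alpha_1+\sigma$, respectively $\beta_1=\alpha_1$, contradicting $\beta_1\in GF(4)-\{\alpha_1,\alpha_1+\sigma\}$. With this parity analysis over all of $C$ in place of your three-coordinate check, the rest of your argument goes through and coincides with the paper's proof.
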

\begin{proof}
	By \Cref{idealness-local}, it suffices to argue that all localizations of $\mult(T)$ are ideal. Let $\alpha=(\alpha_1,\ldots,\alpha_n)\not\in T$. We will show that the localization of $\mult(T)$ with respect to $\alpha$, denoted $\ind(T,\alpha)$, is ideal. 
	Let $\sigma=\alpha_1+\cdots+\alpha_n\neq0$. Note that $\local(T,\alpha)$ has $n$ members of cardinality~1, $\{\alpha_1+\sigma\},\ldots,\{\alpha_n+\sigma\}$ by \Cref{PR:edges of size 1 or 2}~(1). By \Cref{PR:edges of size 1 or 2}~(2), the members of cardinality~2 form a connected bipartite graph $G$ where
		\begin{itemize}
			\item $G$ is bipartite on $\left\{\beta_1,\ldots,\beta_n\right\}\cup \left\{\beta_1+\sigma,\ldots,\beta_n+\sigma\right\}$ where $\{\beta_i,\beta_i+\sigma\}=GF(4)-\{\alpha_i,\alpha_i+\sigma\}$ for $i\in[n]$,
			\item $\beta_i=\beta_1+\alpha_1+\alpha_i$ for $i\in[n]$, and
			\item the edge set of $G$ is $\left\{\{\beta_i,\beta_k+\sigma\}:\;i\neq k\right\}$.
		\end{itemize}
		We will show that there is no  member of cardinality at least~3 in $\local(T,\alpha)$. Suppose for a contradiction that $\local(T,\alpha)$ has a member $C$ whose cardinality is at least~3. As $C$ does not contain any of the members of $\ind(T,\alpha)$ that have cardinality 1 or 2, $C\subseteq \left\{\beta_1,\ldots,\beta_n\right\}$ or $C\subseteq \left\{\beta_1+\sigma,\ldots,\beta_n+\sigma\right\}$. Without loss of generality, we may assume that $C=\left\{\beta_1,\ldots,\beta_k\right\}$ for some $k\geq 3$. Then, by \Cref{LE:hyperedges}, we have $\sum_{i=1}^k\beta_i=\sigma+\sum_{i=1}^k\alpha_i$. Substituting $\beta_i=\beta_1+\alpha_1+\alpha_i$ for $i=2,\ldots,k$, we obtain $\sum_{i=1}^k\left(\beta_1+\alpha_1\right)=\sigma$. Since $\sigma$ is nonzero and $\sum_{i=1}^k\left(\beta_1+\alpha_1\right)$ is either $\beta_1+\alpha_1$ or $0$, we get $\sum_{i=1}^k\left(\beta_1+\alpha_1\right)=\beta_1+\alpha_1=\sigma$. However, $\beta_1+\alpha_1=\sigma$ in turn implies that $\beta_i=\beta_1+\alpha_1+\alpha_i=\alpha_i+\sigma$, contradicting the assumption that $\beta_i\in GF(4)-\{\alpha_i,\alpha_i+\sigma\}$. Therefore, $\local(T,\alpha)$ does not have a member of cardinality at least~3, as required.
		
		Thus the members of $\local(T,\alpha)$ have size either 1 or 2. Let $\mathcal{C}$ be what is obtained from $\ind(T,\alpha)$ after deleting every element that appears in a member of cardinality~1. As no minimally non-ideal clutter has a member of cardinality~1, $\ind(T,\alpha)$ is ideal if and only if $\mathcal{C}$ is ideal. Notice that $M(\mathcal{C})$, the incidence matrix of $\mathcal{C}$, is the edge - vertex incidence matrix of a bipartite graph. It follows from K\H{o}nig's theorem for bipartite matching that $\mathcal{C}$ is ideal. Therefore, $\ind(T,\alpha)$ is ideal, and $\mult(T)$ is ideal, as required.
	\end{proof}

\begin{LE}\label{RE:subdivision}
Suppose that $S$ is a vector space over $GF(q)$ such that $\mat(S)$ has elements in series. Let $S^\prime$ be a projection of $S$ obtained after dropping one of the elements in series. Then $\mult(S)$ is ideal if and only if $\mult(S^\prime)$ is ideal.
\end{LE}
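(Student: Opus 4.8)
The plan is to treat the two directions separately. The implication ``$\mult(S)$ ideal $\Rightarrow$ $\mult(S')$ ideal'' is immediate and does not use the series hypothesis: dropping the relevant coordinate $j$ is a projection, so by \Cref{projection} $\mult(S')$ is a minor of $\mult(S)$ (concretely $\mult(S')=\mult(S)/V_j$), and minors of ideal clutters are ideal. All the content is in the converse, and there the series hypothesis is essential.

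For the converse I would first translate ``$j$ is in series with some coordinate $i$'' into an identity in $S$. Writing $S=\{x\in GF(q)^n:Ax=\mathbf{0}\}$, the cocircuits of $\mat(S)$ are the minimal supports of vectors in the row space of $A$, and $\{i,j\}$ is such a cocircuit; so there is $w=y^\top A$ in the row space of $A$ with $\supp(w)=\{i,j\}$, and then $w_i x_i + w_j x_j = w\cdot x = y^\top(Ax)=0$ for every $x\in S$, i.e.\ $x_j = c\,x_i$ with $c:=-w_i w_j^{-1}\neq 0$. Hence $x\mapsto x/\{j\}$ is a bijection $S\to S'$, and in every member $M$ of $\mult(S)$ the element of $M$ in $V_j$ is the image of the element of $M$ in $V_i$ under the bijection $\phi\colon V_i\to V_j$, $v\mapsto cv$, of the two copies of $GF(q)$. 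Equivalently, for every $v\in V_i$ and every member $M$, $v\in M\iff \phi(v)\in M$; that is, the columns of $M(\mult(S))$ indexed by $v$ and $\phi(v)$ are identical. Deleting the columns indexed by $V_j$ recovers $M(\mult(S'))$ (as it must, by the first paragraph); read backwards, $M(\mult(S))$ is built from $M(\mult(S'))$ by adjoining, one at a time, the $q$ columns indexed by $V_j$, each a duplicate of a column already present.

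It therefore suffices to prove that adjoining a duplicate column to a $0,1$ matrix $M$ preserves integrality of its set covering polyhedron $Q(M):=\{x\geq \mathbf{0}:Mx\geq \mathbf{1}\}$; applying this $q$ times finishes the proof. Let $M^+$ be obtained from $M$ by adding a column $f$ equal to an existing column $e$, and let $d$ be the vector with $1$ in coordinate $e$, $-1$ in coordinate $f$, and $0$ elsewhere, so that $M^+ d=\mathbf{0}$. If $x^\ast$ were an extreme point of $Q(M^+)$ with $x^\ast_e,x^\ast_f>0$, then $x^\ast\pm\varepsilon d$ would both be feasible for small $\varepsilon>0$, contradicting extremality; so every extreme point of $Q(M^+)$ has $x^\ast_e=0$ or $x^\ast_f=0$. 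If $x^\ast_f=0$, then the restriction of $x^\ast$ to the columns of $M$ lies in $Q(M)$ and is an extreme point there (any convex decomposition in $Q(M)$, extended by $0$ in coordinate $f$, would decompose $x^\ast$); the case $x^\ast_e=0$ is symmetric. Conversely, given an extreme point $\tilde x$ of $Q(M)$, putting all of its weight $\tilde x_e$ on coordinate $e$ and $0$ on $f$ (or vice versa) yields an extreme point of $Q(M^+)$: $\tilde x$ meets a number of tight linearly independent constraints among $\{Mx\geq\mathbf{1},\,x\geq\mathbf{0}\}$ equal to the number of columns of $M$, and adjoining the tight constraint $x_f=0$ gives one more, still linearly independent. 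Thus $Q(M^+)$ and $Q(M)$ have the same extreme points up to this $e\leftrightarrow f$ splitting, and in particular one is integral iff the other is.

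The step I expect to need the most care is the linear-independence bookkeeping in this extreme-point correspondence — verifying both that the lifted points are genuinely extreme and that the restricted points are; everything else is routine. I also note that the intermediate matrices produced while adjoining columns to $M(\mult(S'))$ need not be incidence matrices of clutters, but this is harmless: only integrality of the polyhedra $Q(\cdot)$ is used, and the two endpoints $M(\mult(S'))$ and $M(\mult(S))$ are genuine clutter incidence matrices, where ``idealness'' means exactly integrality of $Q$.
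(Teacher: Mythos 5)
Your proof is correct and follows essentially the same route as the paper: the series relation forces $x_j=c\,x_i$ on $S$, so $\mult(S)$ is exactly $\mult(S')$ with each element of $V_i$ duplicated (identical columns), and duplication preserves idealness. The only differences are cosmetic — you obtain $x_j=c\,x_i$ from the size-two cocircuit via the row space instead of the paper's basis normalization, and you spell out the duplicate-column extreme-point argument that the paper treats as routine.
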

\begin{proof}
Without loss of generality, assume that $\mat(S)$ has $n$ elements and that elements $n-1, n$ are in series. Let $S^\prime$ be defined as the projection of $S$ obtained after dropping the $n\textsuperscript{th}$ coordinate of the points in $S$. Then $S^\prime$ is a vector space in $GF(q)^{n-1}$, and by \Cref{LE:matroid-minor}, $\mat(T)=\mat(S)/\{n\}$.
	
	Let $x\in S$. Then $\supp(x)$ is the union of some circuits of $\mat(S)$ by \Cref{RE:supports-matroid}. As $n-1,n$ are series elements, a circuit of $\mat(S)$ contains $n-1$ if and only if it contains $n$, implying in turn that $n-1\in \supp(x)$ if and only if $n\in \supp(x)$. Let $v^1,\ldots,v^r$ give rise to a basis of $S$. If $n\in \supp(x)$ for some $x\in S$, then $n\in \supp(v^\ell)$ for some $\ell\in[r]$, and thus, we may assume that $n\in \supp(v^1)$ and that $v^1_n\neq 0$. After scaling the $v^\ell$'s, if necessary, we may assume that $v^\ell_n=0$ for $\ell\in[r]-\{1\}$. Since $n-1\in \supp(x)$ if and only if $n\in \supp(x)$ for $x\in S$, we have that $v^1_{n-1}\neq 0$ and $v^\ell_{n-1}=0$ for $\ell\in[r]-\{1\}$. Then for some $y,z\in GF(q)-\{0\}$,
	\[
	\begin{matrix}
	v^1\\
	v^2\\
	\vdots\\
	v^r
	\end{matrix}
	\left[
	\begin{array}{ccc|cc}
	&\cdots& &y & z\\
	&\cdots& &0 & 0\\
	&\vdots& &0 & 0\\
	&\cdots& &0 & 0
	\end{array}
	\right].
	\]
	Then it follows that $S=\left\{(x_1,\ldots,x_{n-1},zy^{-1}x_{n-1}):\ (x_1,\ldots, x_{n-1})\in S^\prime\right\}$, and by \Cref{RE:relabel-elts}, $\mult(S)\cong \mult(T)$ where
	$T=\left\{(x_1,\ldots,x_{n-1},x_{n-1}):\ (x_1,\ldots, x_{n-1})\in S^\prime\right\}$. Let $V_1\cup\cdots\cup V_n$ be the ground set of $\mult(S)$ where each $V_i$ is a copy of $GF(q)$. Then
	$$\mult(T) = \left\{C:\ C'\in \mult(S^\prime),\ C\cap\left(V_1\cup \cdots \cup V_{n-1}\right)=C^\prime,\ C\cap V_n = C^\prime\cap V_{n-1}\right\}.$$
In words, $\mult(T)$ is obtained from $\mult(S^\prime)$ after duplicating the element in $V_{n-1}$ of each member $C^\prime\in\mult(S^\prime)$. Then the $V_{n-1}$ part and the $V_n$ part of the members of $\mult(T)$ are identical. Hence, $\mult(T)$ is ideal if and only if $\mult(S^\prime)$. As $\mult(S)$ is isomorphic to $\mult(T)$, it follows that $\mult(S)$ is ideal if and only if $\mult(S^\prime)$ is ideal.	
\end{proof}

Now we are ready to prove \Cref{q=4}. The proof first reduces to the case when the vector space $T$ admits a sunflower basis. Then the idea is to show that $\mat(T)$ is a series extension of $\mat(T')$ where $\mat(T')\cong \mat(A_{t})$ for some $t\geq 3$. We then use Lemmas~\ref{GF4-ideal} and~\ref{RE:subdivision} to show that $\mult(T)$ is ideal.

\begin{proof}[{\bf Proof of \Cref{q=4}}]
		Take an integer $n\geq 1$, and let $S\subseteq GF(4)^n$ be a vector space over $GF(4)$. 
First of all, {\bf (i)}$\Rightarrow${\bf (iii)} is straightforward as $\Delta_3$ is non-ideal. In what follows, we will show directions {\bf (iii)}$\Rightarrow${\bf (ii)} and {\bf (ii)}$\Rightarrow${\bf (i)}.

	{\bf (iii)$\Rightarrow$(ii)}: By \Cref{LE:no U24 K4/e},  $\mat(S)=M_1\oplus\cdots\oplus M_k$ for some $k\geq 1$ where for each $i\in[k]$, $M_i$ is the cycle matroid of a bridge, a circuit, or a subdivision $A_t$ for some $t\geq 3$. Then it follows from \Cref{almost-disj-} that $S$ satisfies {\bf (ii)}.
	
	{\bf (ii)$\Rightarrow$(i)}: It suffices to show that $\mult(S_i)$ is ideal for every $i\in[k]$ due to \Cref{set-product}. To this end, take an $i\in[k]$. If $S_i$ has dimension at most 1, then $S_i=\{\mathbf{0}\}$ or $S_i=\langle v\rangle$ for some nonzero vector $v$, in which case it follows from \Cref{disj->ideal} that $S_i$ is ideal.
	Thus we may assume that $S_i=\langle v^1,\ldots, v^r\rangle$ where $r\geq 2$ and $v^1,\ldots, v^r$ give rise to a sunflower basis of $S_i$. 
	Let $T'=\langle w^1,\ldots, w^r\rangle$ where 
	\[
	\begin{matrix}
	w^1\\
	w^2\\
	\vdots\\
	w^r
	\end{matrix}
	\left[
	\begin{array}{c|c|c|c|c}
	1 & 1 & 0 & \cdots & 0\\
	1 & 0 & 1 & \cdots & 0\\
	\vdots    & \vdots    &   \vdots  & \vdots & \vdots\\
	1 & 0 & 0 & \cdots & 1
	\end{array}
	\right].
	\]
	Then $T'=\left\{x\in GF(4)^{r+1}:x_1+\cdots+x_{r+1}=0\right\}$, so by \Cref{GF4-ideal}, $\mult(T')$ is ideal. Suppose that $v^i$ is of the form $(u^0, u^i)$ for $i\in[r]$, and let $d_\ell$ denote the number of entries in $u^\ell$ for $\ell=0,1,\ldots, r$. Then we define $T$ as
	\[
	T:=\left\{(\underbrace{x_1,\ldots,x_1}_{d_0},\underbrace{x_2,\ldots,x_2}_{d_1},\ldots,\underbrace{x_{r+1},\ldots,x_{r+1}}_{d_r}):\;(x_1,x_2,\ldots,x_{r+1})\in T'\right\}.
	\]
	Then $T$ is generated by $y^1,\ldots,y^r$ where 
	\[
	\begin{matrix}
	\\
	y^1\\
	y^2\\
	\vdots\\
	y^r
	\end{matrix}
	\left[
	\begin{array}{c|c|c|c|c}
	\overbrace{\mathbf{1}}^{d_0} & \overbrace{\mathbf{1}}^{d_1} & \overbrace{\mathbf{0}}^{d_2} & \cdots & \overbrace{\mathbf{0}}^{d_r}\\
	\mathbf{1} & \mathbf{0} & \mathbf{1} & \cdots & \mathbf{0}\\
	\vdots    & \vdots    &   \vdots  & \vdots & \vdots\\
	\mathbf{1} & \mathbf{0} & \mathbf{0} & \cdots & \mathbf{1}
	\end{array}
	\right].
	\]
	Note that $T'$ is a projection of $T$ obtained after dropping the coordinates that correspond to some series elements of $\mat(T)$. As $\mult(T')$ is ideal, it follows from \Cref{RE:subdivision} that $\mult(T)$ is ideal. Moreover, $S_i$ can be obtained from $T$ by taking coordinate-wise bijections. Hence, \Cref{RE:relabel-elts} implies that $\mult(S_i)\cong \mult(T)$, thereby showing that $\mult(S_i)$ is ideal, as required.
\end{proof}

\section{Powers of 2 greater than 4}\label{sec:q>4}

In this section, we prove \Cref{q>4} which characterizes when the multipartite uniform clutter of a vector space $S$ over $GF(2^k)$ with $k>2$ is ideal. We start by proving Lemmas~\ref{PR:parallel} and~\ref{LE:non-disjoint-supports'} which imply that if $\mult(S)$ is ideal, then the underlying matroid $\mat(S)$ does not contain two distinct circuits that intersect. The proofs of the lemmas rely on the tools from~\Cref{sec:p=2-At}.

For the first lemma, recall that $C_5^2$ is the clutter of edges in a cycle of length 5, and that $C_5^2$ is non-ideal. 

\begin{LE}\label{PR:parallel}
	Let $q$ be a power of 2 greater than 4, and let $S\subseteq GF(q)^3$ be a vector space over $GF(q)$ such that $\mat(S)$ is isomorphic to $\mat(A_3)$. Then $\mult(S)$ has $C_5^2$ as a minor.
\end{LE}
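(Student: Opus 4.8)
By \Cref{RE:parallel-generators} we may work with $S=\{x\in GF(q)^3 : x_1+x_2+x_3=0\}$, since $\mat(S)\cong\mat(A_3)$ forces $S$ to be isomorphic to this space, and isomorphism preserves the minor structure of $\mult(S)$ by \Cref{RE:relabel-elts}. The plan is to exhibit an explicit localization of $\mult(S)$ — or a minor of a localization — that is isomorphic to $C_5^2$. Since $q=2^k$ with $k\geq 3$, we have $|GF(q)|\geq 8$, which gives us enough room to pick out five edges forming a $5$-cycle.

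First I would choose $\alpha\in GF(q)^3$ with $\sigma:=\alpha_1+\alpha_2+\alpha_3\neq 0$, and invoke \Cref{PR:edges of size 1 or 2} to describe $\local(S,\alpha)$. Part (1) says the singleton members are $\{\alpha_1+\sigma\},\{\alpha_2+\sigma\},\{\alpha_3+\sigma\}$, and part (2) says the size-2 members form $\tfrac{q}{2}-1\geq 3$ bipartite components $G_1,\dots,G_{q/2-1}$, where each $G_j$ is a complete bipartite graph $K_{3,3}$ on $\{\beta_1^j,\beta_2^j,\beta_3^j\}\cup\{\beta_1^j+\sigma,\beta_2^j+\sigma,\beta_3^j+\sigma\}$ minus a perfect matching — that is, each $G_j$ is exactly a $6$-cycle. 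As in the proof of \Cref{GF4-ideal}, one should first check (via \Cref{LE:hyperedges} and the substitution $\beta_i^j=\beta_1^j+\alpha_1+\alpha_i$) that $\local(S,\alpha)$ has no member of size $\geq 3$: a size-$3$ member would have to live inside one side of some $G_j$, i.e. be $\{\beta_1^j,\beta_2^j,\beta_3^j\}$ or its translate, and the equation from \Cref{LE:hyperedges} forces $\beta_1^j+\alpha_1=\sigma$, contradicting $\beta_i^j\in GF(q)-\{\alpha_i,\alpha_i+\sigma\}$. So $\local(S,\alpha)$ consists of three singletons together with a disjoint union of $6$-cycles.

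Next, after deleting the elements appearing in singleton members (which does not affect idealness and shrinks the picture to the $6$-cycles), I would delete all but one of the components $G_j$ — delete all the elements of $G_2,\dots,G_{q/2-1}$ — leaving a single $6$-cycle $C_6^2$, the clutter of edges of a $6$-vertex cycle. Now $C_6^2$ is ideal, so this is not yet a certificate; the final move is to contract one element of this $6$-cycle. Contracting a vertex $v$ of $C_6^2$ replaces the two edges $\{u,v\},\{v,w\}$ incident to $v$ with $\{u\},\{w\}$ (after taking minimal sets), leaving a path on $5$ vertices with two singleton endpoints; deleting the two vertices in those singletons leaves $C_5^2$ on the middle five vertices... wait — more carefully, contracting a vertex of a cycle $C_{6}$ yields a path on $5$ edges, i.e. the clutter of edges of a path, which is ideal, not $C_5^2$. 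So instead I would contract an \emph{edge} $e=\{v,w\}$ of the $6$-cycle: in clutter terms, contracting the element $v$ turns the two edges at $v$ into singletons; the right operation is to contract a single ground-set element $v$ (a vertex of $G_j$) and observe that one of the two edges at $v$ becomes redundant, collapsing $C_6^2$ into $C_5^2$. Concretely, contracting one vertex of the hexagon identifies its two neighbors' incident structure so that the remaining five edges on five vertices form exactly $C_5^2$. Then by \Cref{projection} (restrictions/projections give minors) together with the fact that contractions and deletions compose, $C_5^2$ is a minor of $\mult(S)$.

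The main obstacle is the bookkeeping in this last step: verifying precisely which single clutter-minor operation on the hexagon $G_j$ (a deletion or a contraction of one ground-set element) produces $C_5^2$ rather than a path, and checking that the singleton members $\{\alpha_i+\sigma\}$ and the other components $G_{j'}$ can be cleared away without obstructing this — i.e. that the deletions and contractions commute as needed. I expect this to reduce to a short explicit computation: fix $\alpha=(0,0,a)$ for a suitable $a\neq 0$ (so $\sigma=a$), pick the component indexed by some $\gamma\notin\{0,a\}$, write down the six size-2 members explicitly using \Cref{PR:edges of size 1 or 2}, and directly check that after deleting the three singleton elements, deleting the other components, and contracting one appropriately chosen element, the incidence matrix of what remains is $M(C_5^2)$. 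The fact that $q>4$ is used exactly where we need $\tfrac{q}{2}-1\geq 1$ nontrivially (in fact $\geq 3$) so that at least one hexagon component $G_j$ exists at all — for $q=4$ there are no size-$2$ members of this non-matching type, which is why $\mult(S)$ is ideal there (consistent with \Cref{q=4} allowing $\mat(A_3)$).
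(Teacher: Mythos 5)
There is a fatal gap: your structural claim that $\local(S,\alpha)$ has no member of size at least $3$ is false when $q>4$, and the rest of the argument collapses with it. The argument you borrow from \Cref{GF4-ideal} only shows that a member of size $\geq 3$ cannot lie inside one side of a single component $G_j$; but for $q>4$ there are $\frac{q}{2}-1\geq 3$ components, and a set meeting several \emph{different} components contains no size-$2$ member (there are no edges between components), so it can perfectly well be a member. Indeed, by \Cref{LE:hyperedges} one checks that sets such as $\{\beta_1^1,\beta_2^2,\beta_3^3+\sigma\}$, with one element from each of three distinct components, are members of size $3$, and these cross-component members are exactly what the paper uses: its proof keeps a few carefully chosen vertices of $G_1,G_2,G_3$ so that three size-$2$ members of $G_1$ together with two size-$3$ members spanning $G_1,G_2,G_3$ form, after contracting $\beta_2^2,\beta_2^2+\sigma$, the incidence matrix of $C_5^2$. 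This is also the real role of $q>4$: it guarantees at least three components (so that $a$, $b$, $a+b+\alpha_1$ can be placed in distinct $G_j$'s), not merely one; for $q=4$ there is still one hexagon component, contrary to your closing remark.

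Moreover, even granting your (incorrect) description of the localization, the final step cannot work in principle. If $\local(S,\alpha)$ consisted only of singletons and disjoint $6$-cycles, its incidence matrix would be that of a bipartite graph (after removing the singleton elements), hence ideal; by \Cref{idealness-local} all localizations ideal would force $\mult(S)$ to be ideal, which is incompatible with $\mult(S)$ having the non-ideal clutter $C_5^2$ as a minor. Concretely, $C_6^2$ (edges of a $6$-cycle) is ideal and idealness is minor-closed, so no sequence of deletions and contractions of ground-set elements of a hexagon yields $C_5^2$: contracting a vertex $v$ of the hexagon produces two singleton members and, after taking minimal sets, a short path, never an odd hole. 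So the ``collapse $C_6^2$ into $C_5^2$'' move you describe does not exist; the $C_5^2$ must come from the interaction between the size-$2$ members of one component and the size-$3$ members that straddle three components, which is the content the proposal is missing.
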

\begin{proof}
	By \Cref{RE:parallel-generators}, we may assume that $S=\{x\in GF(q)^3:x_1+x_2+x_3=0\}$. Let $\alpha=(\alpha_1,\alpha_2,\alpha_3)\not\in S$. We will show that $\ind(S,\alpha)$ has $C_5^2$ as a minor. 
	Let $\sigma=\alpha_1+\alpha_2+\alpha_3$, and we choose $a,b\in GF(q)$ such that $a\in GF(q)-\{\alpha_1,\alpha_1+\sigma\}$ and $b\in GF(q)-\{\alpha_1,\alpha_1+\sigma,a,a+\sigma\}$.
	\begin{claim}
		$a+b+\alpha_1\in GF(q)-\{\alpha_1,\alpha_1+\sigma,a,a+\sigma,b,b+\sigma\}$.
	\end{claim}
	\begin{cproof}
		If $a+b+\alpha_1=\alpha_1$ or $\alpha_1+\sigma$, then $b=a$ or $b=a+\sigma$, contradicting the choice of $b$. If $a+b+\alpha_1=a$ or $a+\sigma$, then $b=\alpha_1$ or $b=\alpha_1+\sigma$, contradicting the choice of $b$. If $a+b+\alpha_1=b$ or $b+\sigma$, then $a=\alpha_1$ or $a=\alpha_1+\sigma$, a contradiction as $a\not\in \{\alpha_1,\alpha_1+\sigma\}$. Therefore, $a+b+\alpha_1\notin \{\alpha_1,\alpha_1+\sigma,a,a+\sigma,b,b+\sigma\}$, as required.
	\end{cproof}
	
	By \Cref{PR:edges of size 1 or 2}~(2), the members of cardinality~2 in $\local(S,\alpha)$ 
	form a graph with $\frac{q}{2}-1$ connected components $G_1,\ldots, G_{\frac{q}{2}-1}$ where the vertex set of $G_j$ is 
	\[\left\{\beta_1^j,\beta_1^j+\sigma\right\}\cup\left\{\beta_2^j,\beta_2^j+\sigma\right\}\cup\left\{\beta_3^j,\beta_3^j+\sigma\right\}\]
	where  $\beta_i^j,\beta_i^j+\sigma\in U_i-\{\alpha_i+\sigma\}$ and $U_i=GF(q)-\{\alpha_i\}$ for $i\in[3]$. Furthermore, $G_1,\ldots,G_{\frac{q}{2}-1}$ are 6-cycles by \Cref{PR:edges of size 1 or 2}~(2) (see Figure~\ref{fig:subgraph} for an illustration). As $\frac{q}{2}-1\geq 3$, without loss of generality, we may assume that $\beta_1^1=a$, $\beta_1^2=b$, and $\beta_1^3=a+b+\alpha_1$, i.e., $G_1,G_2,G_3$ contain $a,b,a+b+\alpha_1\in U_1-\{\alpha_1+\sigma\}$, respectively.
	\begin{claim}
		The following statements hold:
		\begin{enumerate}[(1)]
			\item $\beta_1^1+\sigma=a+\sigma$, $\beta_2^1+\sigma=a+\alpha_1+\alpha_2+\sigma$, and $\beta_3^1=a+\alpha_1+\alpha_3$,
			\item $\beta_2^2=b+\alpha_1+\alpha_2$ and $\beta_2^2+\sigma=b+\alpha_1+\alpha_2+\sigma$, and
			\item $\beta_3^3+\sigma=a+b+\alpha_3+\sigma$.
		\end{enumerate}
	\end{claim} 
	\begin{cproof}
		The claim follows from \Cref{PR:edges of size 1 or 2}~(2).
	\end{cproof}
	
	Now keep elements $\beta_1^1,\beta_1^1+\sigma,\beta_2^1+\sigma,\beta_3^1$ in $G_1$, $\beta_2^2,\beta_2^2+\sigma$ in $G_2$, and $\beta_3^3+\sigma$ in $G_3$ and delete the other elements from $\ind(S,\alpha)$. %
	(see Figure~\ref{fig:subgraph} for an illustration; we keep only the circled elements).
	\begin{figure}[h!]
		\begin{center}
			\begin{tikzpicture}
			\draw[fill=black] (-0.3,0) circle (2pt);
			\draw[fill=black] (1,0) circle (2pt);
			\draw[fill=black] (1,2) circle (2pt);
			\draw[fill=black] (-0.3,2) circle (2pt);
			\draw[fill=black] (2.3,2) circle (2pt);
			\draw[fill=black] (2.3,0) circle (2pt);
			
			\node[-,label=below:{$\beta_3^1+\sigma$}] (3) at (2.3,0) {};
			\node[-,label=above:{$\beta_3^1$}] (c) at (2.3,2) {};
			\draw[black] (2.3,2) circle (0.25cm);
			\node[-,label=below:{$\beta_1^1+\sigma$}] (1) at (-0.3,0) {};
			\draw[black] (-0.3,0) circle (0.25cm);
			\node[-,label=above:{$\beta_1^1$}] (a) at (-0.3,2) {};
			\draw[black] (-0.3,2) circle (0.25cm);
			\node[-,label=above:{$\beta_2^1$}] (b) at (1,2) {};
			\node[-,label=below:{$\beta_2^1+\sigma$}] (2) at (1,0) {};
			\draw[black] (1,0) circle (0.25cm);
			\node[-,label=below:{$G_1$}] (0) at (1,-1) {};
			
			\path[thick] (1) edge (c);
			\path[thick] (2) edge (a);
			\path[thick] (2) edge (c);
			\path[dotted,thick] (3) edge (a);
			\path[dotted,thick] (1) edge (b);
			\path[dotted,thick] (3) edge (b);
			\end{tikzpicture}	
			\quad\quad
			\begin{tikzpicture}
			\draw[fill=black] (-0.3,0) circle (2pt);
			\draw[fill=black] (1,0) circle (2pt);
			\draw[fill=black] (1,2) circle (2pt);
			\draw[fill=black] (-0.3,2) circle (2pt);
			\draw[fill=black] (2.3,2) circle (2pt);
			\draw[fill=black] (2.3,0) circle (2pt);
			
			\node[-,label=below:{$\beta_3^2+\sigma$}] (3) at (2.3,0) {};
			\node[-,label=above:{$\beta_3^2$}] (c) at (2.3,2) {};
			\node[-,label=below:{$\beta_1^2+\sigma$}] (1) at (-0.3,0) {};
			\node[-,label=above:{$\beta_1^2$}] (a) at (-0.3,2) {};
			\node[-,label=above:{$\beta_2^2$}] (b) at (1,2) {};
			\draw[black] (1,2) circle (0.25cm);
			\node[-,label=below:{$\beta_2^2+\sigma$}] (2) at (1,0) {};
			\draw[black] (1,0) circle (0.25cm);
			\node[-,label=below:{$G_2$}] (0) at (1,-1) {};
			
			\path[dotted,thick] (1) edge (c);
			\path[dotted,thick] (2) edge (a);
			\path[dotted,thick] (2) edge (c);
			\path[dotted,thick] (3) edge (a);
			\path[dotted,thick] (1) edge (b);
			\path[dotted,thick] (3) edge (b);
			\end{tikzpicture}
			\quad\quad
			\begin{tikzpicture}
			\draw[fill=black] (-0.3,0) circle (2pt);
			\draw[fill=black] (1,0) circle (2pt);
			\draw[fill=black] (1,2) circle (2pt);
			\draw[fill=black] (-0.3,2) circle (2pt);
			\draw[fill=black] (2.3,2) circle (2pt);
			\draw[fill=black] (2.3,0) circle (2pt);
			
			\node[-,label=below:{$\beta_3^3+\sigma$}] (3) at (2.3,0) {};
			\node[-,label=above:{$\beta_3^3$}] (c) at (2.3,2) {};
			\node[-,label=below:{$\beta_1^3+\sigma$}] (1) at (-0.3,0) {};
			\node[-,label=above:{$\beta_1^3$}] (a) at (-0.3,2) {};
			\node[-,label=above:{$\beta_2^3$}] (b) at (1,2) {};
			\node[-,label=below:{$\beta_2^3+\sigma$}] (2) at (1,0) {};
			\draw[black] (2.3,0) circle (0.25cm);
			\node[-,label=below:{$G_3$}] (0) at (1,-1) {};
			
			\path[dotted,thick] (1) edge (c);
			\path[dotted,thick] (2) edge (a);
			\path[dotted,thick] (2) edge (c);
			\path[dotted,thick] (3) edge (a);
			\path[dotted,thick] (1) edge (b);
			\path[dotted,thick] (3) edge (b);
			\end{tikzpicture}
			\caption{The subgraph of $H_{n,\alpha}$ after deleting the vertices}\label{fig:subgraph}
		\end{center}
	\end{figure}
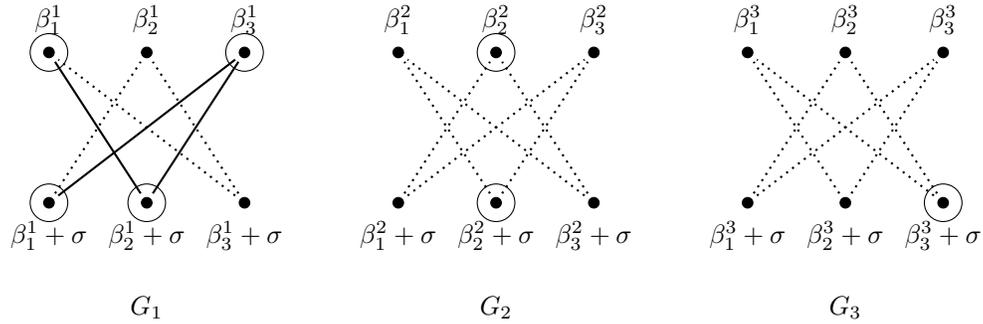
	Let $\mathcal{C}$ denote the resulting minor of $\local(S,\alpha)$.
	
	As $\alpha_i+\sigma$ for $i\in[n]$ are deleted, we know from \Cref{PR:edges of size 1 or 2}~(1) that $\mathcal{C}$ contains no member of size~1. By \Cref{PR:edges of size 1 or 2}~(2), $\mathcal{C}$ has 3 members of size~2: $\left\{\beta_1^1,\beta_2^1+\sigma\right\}$, $\left\{\beta_3^1,\beta_1^1+\sigma\right\}$, $\left\{\beta_3^1,\beta_2^1+\sigma\right\}$, and these are the only ones. (see Figure~\ref{fig:subgraph} for an illustration; the 3 thick edges represent the 3 members of size $2$ in $\mathcal{C}$).
	
	\begin{claim}
		$\left\{\beta_1^1,\beta_2^2,\beta_3^3+\sigma\right\}$ and $\left\{\beta_1^1+\sigma,\beta_2^2+\sigma,\beta_3^3+\sigma\right\}$ are the only members of size greater than 2 in $\mathcal{C}$.
	\end{claim}
	\begin{cproof}
		$\mathcal{C}$ contains at most one element in $U_i$ for $i\in[3]$ by \Cref{LE:hyperedges}, so $\mathcal{C}$ has no member of size greater than~3. Moreover, a member of size 3 contains one element from each $U_1,U_2,U_3$. 
		The subsets of size 3 that do not contain a member of size 2 but one element from each of $U_1,U_2,U_3$ are the following:
		\[
		\left\{\beta_1^1,\beta_2^2,\beta_3^1\right\},\;\left\{\beta_1^1,\beta_2^2+\sigma,\beta_3^1\right\},\;\left\{\beta_1^1,\beta_2^2,\beta_3^3+\sigma\right\},\;\left\{\beta_1^1,\beta_2^2+\sigma,\beta_3^3+\sigma\right\},
		\]
		\[
		\left\{\beta_1^1+\sigma,\beta_2^1+\sigma,\beta_3^3+\sigma\right\},\;\left\{\beta_1^1+\sigma,\beta_2^2,\beta_3^3+\sigma\right\},\;\left\{\beta_1^1+\sigma,\beta_2^2+\sigma,\beta_3^3+\sigma\right\}.
		\]
		By \Cref{LE:hyperedges}, a subset $\{x_1,x_2,x_3\}$ where $x_i\in U_i$ for $i=1,2,3$ is a member if and only if $x_1+x_2+x_3=\sigma+\alpha_1+\alpha_2+\alpha_3$.
		Notice that $\beta_1^1+\beta_2^2+\beta_3^1=b+\alpha_2+\alpha_3$ cannot be $\sigma+\alpha_1+\alpha_2+\alpha_3$, because $b$ is not $\alpha_1+\sigma$ by our choice of $b$. This implies that $\left\{\beta_1^1,\beta_2^2,\beta_3^1\right\}$ is not a member. Similarly, $\left\{\beta_1^1,\beta_2^2+\sigma,\beta_3^1\right\}$ is not a member, because $b\neq \alpha_1$. Notice also that $\{\beta_1^1+\sigma,\beta_2^1+\sigma,\beta_3^3+\sigma\}$ is not a member, because   $\beta_1^1+\sigma+\beta_2^1+\sigma+\beta_3^3+\sigma=a+b+\alpha_1+\alpha_2+\alpha_3+\sigma$ cannot be $\sigma+\alpha_1+\alpha_2+\alpha_3$ by our assumption that $a\neq b$. Observe that $\beta_1^1+\beta_2^2+\beta_3^3+\sigma=\sigma+\alpha_1+\alpha_2+\alpha_3$, implying in turn that $\left\{\beta_1^1,\beta_2^2,\beta_3^3+\sigma\right\}$ and $\left\{\beta_1^1+\sigma,\beta_2^2+\sigma,\beta_3^3+\sigma\right\}$ are members, whereas $\left\{\beta_1^1,\beta_2^2+\sigma,\beta_3^3+\sigma\right\}$ and $\left\{\beta_1^1+\sigma,\beta_2^2,\beta_3^3+\sigma\right\}$ are not. Therefore, $\left\{\beta_1^1,\beta_2^2,\beta_3^3+\sigma\right\}$ and $\left\{\beta_1^1+\sigma,\beta_2^2+\sigma,\beta_3^3+\sigma\right\}$ are the only members of size at least 3 in $\mathcal{C}$, as required.
	\end{cproof}
	
	Now that we have characterized all members of $\mathcal{C}$, we know that the incidence matrix of the corresponding minor $\mathcal{C}$ is the following 0,1 matrix:
	\[
	\bordermatrix{&\beta_1^1 & \beta_2^1+\sigma & \beta_3^1 & \beta_1^1+\sigma & \beta_3^3+\sigma & \beta_2^2 & \beta_2^2+\sigma \cr
		&	 1 & 1 & 0 & 0 & 0 & 0 &0\cr
		&	 0 & 1 & 1 & 0 & 0 & 0 &0\cr
		&	 0 & 0 & 1 & 1 & 0 & 0 &0\cr
		&	 0 & 0 & 0 & 1 & 1 & 0 &1\cr
		&	 1 & 0 & 0 & 0 & 1 & 1 &0
	}
	\]
	Contracting the elements corresponding to $\beta_2^2,\beta_2^2+\sigma$ from $\mathcal{C}$, we obtain a $C_5^2$ minor. Since $\mathcal{C}$ is a minor of $\ind(S,\alpha)$, we deduce that $\ind(S,\alpha)$ also has $C_5^2$ as a minor, as required.
\end{proof}

\begin{LE}\label{LE:non-disjoint-supports'}
Up to isomorphism, $\mat(A_3)$ is the unique minor-minimal matroid with distinct circuits that have a nonempty intersection. Consequently, if two distinct circuits of a matroid intersect, then the matroid has $\mat(A_3)$ as a minor.
\end{LE}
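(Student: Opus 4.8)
The plan is to establish the ``consequently'' implication first --- that any matroid with two distinct circuits of nonempty intersection has $\mat(A_3)$ as a minor --- and then deduce minor-minimality and uniqueness almost for free. Recall $\mat(A_3)$ is the rank-$1$ matroid on three non-loop elements, whose circuits are exactly the three $2$-subsets of its ground set; so it has the property, while a matroid on at most two elements cannot have two distinct circuits meeting nontrivially (two such circuits on $\le 2$ elements would be comparable or disjoint, contradicting incomparability of circuits or the intersection hypothesis). Since every proper minor of $\mat(A_3)$ has at most two elements, $\mat(A_3)$ is automatically minor-minimal with the property; and once the main implication is available, any minor-minimal $M$ with the property contains a $\mat(A_3)$-minor which --- being itself an instance of the property --- must coincide with $M$, so $M\cong\mat(A_3)$, giving uniqueness.

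For the main implication I would induct on $|E(M)|$, the cases $|E(M)|\le 2$ being vacuous by the remark above. Given distinct circuits $C_1,C_2$ with $C_1\cap C_2\neq\emptyset$, first delete any element outside $C_1\cup C_2$ (both circuits survive), so assume $E(M)=C_1\cup C_2$. If $|C_1\cap C_2|\ge 2$, contract an element $e\in C_1\cap C_2$: using the standard fact that $C\setminus\{e\}$ is a circuit of $M/e$ whenever $C$ is a circuit of $M$ through $e$, the sets $C_1\setminus\{e\}$ and $C_2\setminus\{e\}$ are distinct circuits of $M/e$ still meeting in $(C_1\cap C_2)\setminus\{e\}$, and induction applies. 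Hence we may assume $C_1\cap C_2=\{e\}$.

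The remaining work splits on $\max\{|C_1|,|C_2|\}$. If, say, $|C_1|\ge 3$, pick distinct $f,f'\in C_1\setminus C_2$ (possible since $|C_1\setminus C_2|=|C_1|-1\ge 2$); in $M/f$ the set $C_1\setminus\{f\}$ is a circuit through $e$ and $f'$, while monotonicity of closure applied to $e\in\cl_M(C_2\setminus\{e\})$ and $f\notin C_2$ yields $e\in\cl_{M/f}(C_2\setminus\{e\})$, hence a circuit $D$ of $M/f$ with $e\in D\subseteq C_2$. Since $f'\in C_1\setminus\{f\}$ but $f'\notin C_2\supseteq D$, these are distinct intersecting circuits of $M/f$, and induction applies. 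This leaves $|C_1|=|C_2|=2$ with $C_1\cap C_2=\{e\}$: writing $C_1=\{e,f\}$, $C_2=\{e,g\}$, the two circuits force $e,f$ parallel and $e,g$ parallel (all non-loops), so $M$ has ground set $\{e,f,g\}$, rank $1$, and no loop, i.e.\ $M\cong\mat(A_3)$, and we are done.

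I expect the only delicate point to be Case~$3$: showing that after contracting $f\in C_1\setminus C_2$ one still finds a circuit inside $C_2$ passing through $e$. The naive argument ``$C_2$ is dependent in $M/f$ and contains $e$, hence contains a circuit through $e$'' is false in general; the clean fix is to route it through the closure operator ($e\in\cl_M(C_2\setminus\{e\})\subseteq\cl_M((C_2\setminus\{e\})\cup\{f\})$, then pass to $M/f$), which also sidesteps any casework about whether $f$ is parallel to an element of $C_2$. Everything else is routine bookkeeping with the circuit and closure axioms and with deletion/contraction.
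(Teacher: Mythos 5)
Your proof is correct, but it is organized differently from the paper's and its key step uses a different tool. The paper argues directly about a minor-minimal matroid $M$ with the property: minimality forces $C_1\cup C_2=E$ and $|C_1\cap C_2|=1$, then the circuit elimination axiom produces a third circuit $C_3\subseteq(C_1\cup C_2)-\{e\}$, and a short containment/counting argument (applying the same two facts to the pairs $\{C_1,C_3\}$ and $\{C_2,C_3\}$) pins down $|C_1|=|C_2|=2$ and hence $M\cong\mat(A_3)$; the ``consequently'' part then follows. You instead prove the minor-containment statement first, by induction on $|E(M)|$: the same two reductions (delete outside $C_1\cup C_2$, contract $C_1\cap C_2$ down to a singleton) appear, but where the paper invokes circuit elimination you contract an element $f\in C_1\setminus C_2$ and use the closure operator ($e\in\cl_M(C_2\setminus\{e\})$ persists in $M/f$, yielding a circuit $D\subseteq C_2$ through $e$) to keep two distinct intersecting circuits alive in a smaller matroid, terminating at the two-parallel-pairs case; minor-minimality and uniqueness of $\mat(A_3)$ then come out as corollaries rather than being the object of the analysis. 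Your closure step is sound (it correctly avoids the false claim that a dependent set containing $e$ must contain a circuit through $e$), and your route trades the paper's one-shot structural argument and its ``readily checked'' counting for a slightly longer but more mechanical induction in which every step is an explicit single-element deletion or contraction; the paper's version is shorter and exhibits the whole circuit structure $\{e,f\},\{e,g\},\{f,g\}$ at once.
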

\begin{proof}
Let $M$ be a minor-minimal matroid over ground set $E$ with distinct circuits that intersect. 

Let $C_1,C_2$ be any pair of distinct circuits that intersect. Observe that $C_1\cup C_2=E$, for if not, $M\setminus \overline{C_1\cup C_2}$ would a proper matroid minor with distinct circuits, namely $C_1,C_2$, that intersect, which cannot be the case. Observe further that $I:=C_1\cap C_2$, which by assumption is nonempty, has size one. For if not, for any $e\in I$, $M/(I-\{e\})$ would be a proper matroid minor with distinct circuits, namely $C_1-(I-\{e\}),C_2-(I-\{e\})$, that intersect, which cannot be the case.

In summary, every two circuits that intersect, have $E$ as their union and an intersection of size one. Since $M$ is a matroid, there is a circuit $C_3\subseteq (C_1\cup C_2)-\{e\}$. Clearly, $C_3$ intersects both $C_1,C_2$. Thus, $|C_1\cap C_3|=|C_2\cap C_3|=1$ and $C_1\cup C_3=C_2\cup C_3=E$. It can be readily checked that $|C_1|=|C_2|=2$, implying in turn that $M\cong \mat(A_3)$, as required.
\end{proof}

Now we are ready to prove \Cref{q>4}. The crux of the proof is outlined as follows. If $\mult(S)$ is ideal where $S$ is a vector space over $GF(2^k)$ for some $k>2$, then $\mult(S)$ has no $C_5^2$ as a minor. Then $\mat(S)$ has no two distinct circuits that intersect, by Lemmas~\ref{PR:parallel} and~\ref{LE:non-disjoint-supports'}. Then we use \Cref{disj-supp} to argue that $S$ has a basis with vectors of pairwise disjoint supports. 

\begin{proof}[{\bf Proof of \Cref{q>4}}]
Take an integer $n\geq 1$. Let $q$ be a power of 2 larger than 4, and let $S\subseteq GF(q)^n$ be a vector space over $GF(q)$. 
{\bf (iii)}$\Rightarrow${\bf (ii)}: Since $\mult(S)$ contains no $C_5^2$ as a minor, $\mat(S)$ has no $\mat(A_3)$ as a matroid minor, by \Cref{PR:parallel}. Thus, every two distinct circuits of $\mat(S)$ must be disjoint, by \Cref{LE:non-disjoint-supports'}. This implies that $\mat(S)$ is the cycle matroid of a graph whose blocks are bridges and circuits, so (ii) follows from \Cref{disj-supp}.
{\bf (i)}$\Rightarrow${\bf (iii)} follows immediately from the fact that $C_5^2$ is non-ideal.
{\bf (ii)}$\Rightarrow${\bf (i)} follows immediately from \Cref{disj->ideal}. 
\end{proof}

\section{The Replication and $\tau=2$ Conjectures}\label{sec:replication}

Let $\mathcal{C}$ be a clutter over ground set $V$. Given the weights of the elements $w\in\mathbb{Z}_+^V$, the minimum weight of a cover of $\mathcal{C}$ can be computed by the following integer linear program:
\[
\tau(\mathcal{C},w)=\min\left\{w^\top x:\;M(\mathcal{C})x\geq \mathbf{1},\;x\in\mathbb{Z}_+^V\right\}
\]
A dual of this integer program is given by the following:
\[
\nu(\mathcal{C},w)=\max\left\{\mathbf{1}^\top y:\;M(\mathcal{C})^\top y\leq w,\;y\in\mathbb{Z}_+^\mathcal{C}\right\},
\]
and this computes the maximum size of a {\it packing} of members of $\mathcal{C}$ such that each element $v$ appears in at most $w_v$ members in the packing. The linear programming relaxations of these two integer programs are the following primal-dual pair:
\[
\begin{array}{lll}
\tau^*(\mathcal{C},w)=&\mbox{minimize} \quad & w^\top x \\
&\mbox{subject to} \quad &M(\mathcal{C})x\geq\mathbf{1}\\
&&x \geq\mathbf{0}
\end{array}
\quad\quad\quad\quad 
\begin{array}{lll}
\nu^*(\mathcal{C},w)=&\mbox{maximize} \quad & \mathbf{1}^\top y \\
&\mbox{subject to} \quad &M(\mathcal{C})^\top y\leq w\\
&&y \geq\mathbf{0}
\end{array}
\]
By linear programming duality, we have that
\[
\tau(\mathcal{C},w)\geq \tau^*(\mathcal{C},w)=\nu^*(\mathcal{C},w)\geq \nu(\mathcal{C},w).\] 
Although $\tau^*(\mathcal{C},w)=\nu^*(\mathcal{C},w)$ always holds, it is not always the case that $\tau(\mathcal{C},w)=\nu(\mathcal{C},w)$. If $\tau(\mathcal{C},w)=\nu(\mathcal{C},w)$ holds for every $w\in\mathbb{Z}_+^V$, we say that $\mathcal{C}$ has the max-flow min-cut property. In fact, the max-flow min-cut property is equivalent to the {\it total dual integrality} for the integer program computing $\tau(\mathcal{C},w)$. Namely, $\mathcal{C}$ has the max-flow min-cut property if and only if the linear system $M(\mathcal{C})x\geq \mathbf{1},\;x\geq\mathbf{0}$ is {\it totally dual integral}. This implies that if $\mathcal{C}$ has the max-flow min-cut property, then $Q(\mathcal{C})$ is integral~\cite{Hoffman74,Edmonds77} and thus $\mathcal{C}$ is ideal. 

As the max-flow min-cut property is a special case of idealness, a natural question is as to when a clutter has the max-flow min-cut property. In this section, we characterize when the multipartite uniform clutter of a vector space over a finite field has the max-flow min-cut property. 

The readers may have already noticed that \Cref{mfmc} is similar to \Cref{q odd} and \Cref{q>4}. As a direct corollary of these theorems, we obtain the following:
\begin{theorem}
Take a prime power $q$ other than $2,4$, and let $S$ be a vector space over $GF(q)$. Then $\mult(S)$ is ideal if and only if $\mult(S)$ has the max-flow min-cut property.
\end{theorem}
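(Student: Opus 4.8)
The statement is a direct corollary of \Cref{q odd}, \Cref{q>4}, and \Cref{mfmc}, so the plan is simply to stitch these together along the common condition on $S$. First I would recall the one easy direction, valid for every prime power $q$: the MFMC property always implies idealness, since the MFMC property is equivalent to total dual integrality of the system $M(\mathcal{C})x\geq\mathbf{1},\,x\geq\mathbf{0}$, and total dual integrality forces $Q(\mathcal{C})$ to be integral. This needs no restriction on $q$.

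For the converse, assume $\mult(S)$ is ideal and split into the two cases covered by the hypothesis ``$q\neq 2,4$''. If $q$ is an odd prime power, then \Cref{q odd} gives that idealness of $\mult(S)$ is equivalent to statement~(ii) there, namely that $S$ admits a basis with vectors of pairwise disjoint supports. If instead $q$ is a power of $2$ with $q>4$, then \Cref{q>4} likewise yields that idealness of $\mult(S)$ is equivalent to the very same condition: $S$ admits a basis with vectors of pairwise disjoint supports. In either case we have landed on exactly statement~(ii) of \Cref{mfmc}, which holds for an arbitrary prime power $q$ and asserts the equivalence of this basis condition with $\mult(S)$ having the MFMC property. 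Invoking \Cref{mfmc}, we conclude that $\mult(S)$ has the MFMC property, completing the equivalence.

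There is no genuine obstacle here; the substantive work was already done in establishing \Cref{q odd}, \Cref{q>4}, and \Cref{mfmc}. The only point requiring a word of care is that the hypothesis $q\neq 2,4$ is exactly what is needed so that idealness of $\mult(S)$ collapses onto the disjoint-supports basis condition: for $q=2$ the clutter $Q_6=\mult(R_{1,1})$ is ideal but not MFMC, and for $q=4$ a vector space admitting a sunflower basis (as in \Cref{q=4}) can be ideal without the MFMC property, so in both excluded cases condition~(ii) of \Cref{mfmc} is strictly stronger than idealness. One could optionally remark that this is why the theorem is stated only for $q$ other than $2$ and $4$.
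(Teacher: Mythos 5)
Your argument is correct and matches the paper's: the paper also derives this statement as a direct corollary of \Cref{q odd}, \Cref{q>4}, and \Cref{mfmc}, routing idealness through the disjoint-supports basis condition exactly as you do. Your closing remark about why $q=2,4$ are excluded is consistent with the paper's discussion ($Q_6$ for $q=2$, and the $GF(4)$ example in \Cref{sec:replication}).
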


Unlike the case when $q\notin\{2,4\}$, there is a vector space over $GF(4)$ whose multipartite uniform clutter is ideal but does not have the max-flow min-cut property. The element set of $GF(4)$ can be represented as $\{0,1,a,b\}$ where $a$ and $b$ are the numbers satisfying the following addition and multiplication tables:
\[
\begin{array}{c|cccc}
+ & 0 & 1 & a & b\\
\hline
0 & 0 & 1 & a & b\\
1 & 1 & 0 & b & a\\
a & a & b & 0 & 1\\
b & b & a & 1 & 0
\end{array}
\quad\quad\quad\quad
\begin{array}{c|cccc}
\times & 0 & 1 & a & b\\
\hline
0 & 0 & 0 & 0 & 0\\
1 & 0 & 1 & a & b\\
a & 0 & a & b & 1\\
b & 0 & b & 1 & a
\end{array}
\]
\begin{EG}\label{EG:GF(4)-ideal}
	{\rm
		Consider $S=\langle (1,1,0),(1,0,1)\rangle\subseteq GF(4)^3$.
		Then 
		\[
		S=\left\{\begin{array}{l}
		(0,0,0),\;(1,1,0),\;(a,a,0),\;(b,b,0),\;(1,0,1),\;(0,1,1),\;(b,a,1),\;(a,b,1),\\
		(a,0,a),\;(b,1,a),\;(0,a,a),\;(1,b,a),\;(b,0,b),\;(a,1,b),\;(1,a,b),\;(0,b,b)
		\end{array}
		\right\}.
		\]
		One can check by using PORTA~\cite{porta} that $\left\{x\in\mathbb{R}_+^{12}:\;M(\mult(S))x\geq \mathbf{1}\right\}$ is an integral polyhedron, so $\mult(S)$ is ideal. Notice further that $\mult(S)$ does not have the max-flow min-cut property, since $S$ contains \[
		\{(0,0,0),(1,1,0),(1,0,1),(0,1,1)\}\cong R_{1,1}\] as a restriction and so $\mult(S)$ has $Q_6$ as a minor by \Cref{projection}.
	}
\end{EG}

We say that clutter $\mathcal{C}$ \emph{packs} if $\tau(\mathcal{C},\mathbf{1})=\nu(\mathcal{C},\mathbf{1})$. We say that $\mathcal{C}$ has the \emph{packing property} if every minor of $\mathcal{C}$ packs. It was observed in \cite{Cornuejols00} that minimally non-ideal clutters do not pack due to Lehman's theorem~\cite{Lehman90} and that if a clutter has the packing property, then it is ideal. Moreover, notice that the packing property is a relaxed notion of the max-flow min-cut property. Here, the \emph{Replication Conjecture} predicts that the packing property implies the max-flow min-cut property. We answer the conjecture in the affirmative for the class of multipartite uniform clutters from coordinate subspaces.

\begin{proof}[{\bf Proof of \Cref{replication}}]
Take a prime power $q$, and let $S$ be a vector space over $GF(q)$. Suppose that $\mult(S)$ has the packing property. Then every minor of $\mult(S)$ packs and is ideal. Note that $\Delta_3$ is non-ideal. Moreover, it is easy to check that $\tau(Q_6,\mathbf{1})=2$ and $\nu(Q_6,\mathbf{1})=1$, which means that $Q_6$ does not pack. Therefore, $\mult(S)$ has none of $\Delta_3$ and $Q_6$ as a minor. Then it follows from~\Cref{mfmc} that $\mult(S)$ has the max-flow min-cut property.
\end{proof}

Next we consider the \emph{$\tau=2$ Conjecture}~\cite{Cornuejols00} which predicts that a stronger statement than the Replication Conjecture holds true. We call a clutter {\it minimally non-packing} if it does not have the packing property but every proper minor of it does. It is known that a minimally non-packing clutter is either ideal or minimally non-ideal~\cite{Cornuejols00}. Here, the $\tau=2$ Conjecture is that if a clutter $\mathcal{C}$ is ideal and minimally non-packing, then its covering number, defined as $\tau(\mathcal{C},\mathbf{1})$, is two. We show that if the multipartite uniform clutter of a coordinate subspace is ideal and minimally non-packing, then its covering number is two.

\begin{proof}[{\bf Proof of \Cref{tau=2}}]
Take a prime power $q$, and let $S$ be a vector space over $GF(q)$. Suppose that $\mult(S)$ is ideal and minimally non-packing. As $\mult(S)$ does not pack, it does not have the max-flow min-cut property. Then by \Cref{mfmc}, $\mult(S)$ has $\Delta_3$ or $Q_6$ as a minor. Note that as $\Delta_3$ is non-ideal but $\mult(S)$ is ideal, $\mult(S)$ has no $\Delta_3$ as a minor. Then it follows that $\mult(S)$ has $Q_6$ as a minor. Since $Q_6$ itself does not pack and every proper minor of $\mult(S)$ packs, $\mult(S)$ is isomorphic to $Q_6$.  In fact, $Q_6$ is ideal and minimally non-packing, and it has covering number two, as required.
\end{proof}

	\section*{Acknowledgements and Funding}
	We would like to thank G\'erard Cornu\'ejols for helpful discussions. This research is supported, in part, by KAIST Starting Fund (KAIST-G04220016), FOUR Brain Korea 21 Program (NRF-5199990113928), and National Research Foundation of Korea (NRF-2022M3J6A1063021).

\appendix

\section{Proof of \Cref{PR:graph-K4/e}}\label{sec:graph-K4/e}

We will prove \Cref{PR:graph-K4/e} that characterizes graphs with no $K_4/e$ as a graph minor. Given a graph $G=(V,E)$ and its block decomposition, we may associate $G$ with a bipartite graph $\mathcal{B}(G)$ where
\begin{itemize}
	\item a part of the bipartition of $\mathcal{B}(G)$ consists of the cut-vertices of $G$, 
	\item the other part consists of the blocks of $G$, and
	\item a cut-vertex $u$ and a block $B$ are adjacent in $\mathcal{B}(G)$ if $u$ is a vertex in $B$.
\end{itemize}   
It is well-known that $\mathcal{B}(G)$ is a tree all of whose leaves are blocks of $G$ (see~\cite{Bondy08}). We call a vertex of $G$ that is not a cut vertex an {\it internal vertex}.

\begin{proof}[{\bf Proof of \Cref{PR:graph-K4/e}}]

	Assume that $G$ contains no $K_4/e$ as a graph minor. We will prove by induction on the number of edges that each block of $G$ is a bridge, a circuit, or a subdivision of $A_t$ for some $t\geq3$. The base case is trivial. For the induction step, we may assume that $G$ has at least 3 edges. If $G$ has more than one block, a block of $G$ has less edges than $G$ does, so we may apply the induction hypothesis to each block of $G$. 
	Thus we may assume that $G$ is 2-vertex-connected, in which case, $G$ has no loop.	
	
	Let $e$ be an edge of $G$. By the induction hypothesis, each block of $G-\{e\}$ is a bridge, a circuit, or a subdivision of $A_t$ for some $t\geq 3$. Moreover, since $G$ has no loop, $G-\{e\}$ has no loop either. We first prove the following claim:
	\begin{claim}
	Either $\mathcal{B}(G-\{e\})$ is a single vertex, i.e., $G-\{e\}$ is 2-vertex-connected, or $\mathcal{B}(G-\{e\})$ is a path whose two ends are blocks of $G$ and $e$ is incident to internal vertices of the two end blocks of the path.
	\end{claim}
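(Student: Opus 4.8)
The plan is to argue purely with block--cut trees and the $2$-vertex-connectivity of $G$; neither the forbidden $K_4/e$ minor nor the inductive description of the blocks of $G-\{e\}$ is needed for this Claim. Write $e=uv$ and $H:=G-\{e\}$. Since deleting a single edge cannot disconnect a $2$-connected graph, $H$ is connected, and $H$ is loopless because $G$ is (we are in the case where $G$ is $2$-connected with no loop). The first step is the key observation that \emph{every block of $H$ contains an edge lying on some $u$--$v$ path of $H$}: by the standard fact that any two edges of a $2$-connected graph lie on a common cycle, for each edge $f\neq e$ there is a cycle $C$ of $G$ through both $e$ and $f$, and then $C-\{e\}$ is a $u$--$v$ path of $H$ through $f$; since every block of $H$ is a bridge or is $2$-connected it contains an edge, hence is traversed by a $u$--$v$ path.

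Next I would deduce that $u$ and $v$ are not cut-vertices of $H$ and that every cut-vertex of $H$ separates $u$ from $v$. If $u$ were a cut-vertex, some component $K$ of $H-u$ would avoid $v$, and the subgraph induced by $V(K)\cup\{u\}$ (which contains at least one edge) would contain a block of $H$; no $u$--$v$ path of $H$ could traverse that block, since such a path would have to enter and leave $K$ through $u$, contradicting the first step. The same argument applied to an arbitrary cut-vertex $c$ that lies on no $u$--$v$ path places a block of $H$ on the side of $c$ away from both $u$ and $v$, again contradicting the first step. Hence $u,v$ are non-cut-vertices and every cut-vertex of $H$ lies on every $u$--$v$ path.

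Finally I would read off the shape of $\mathcal{B}(H)$. If $H$ has no cut-vertex, then $\mathcal{B}(H)$ is a single vertex and $H$ is $2$-connected: the first alternative. Otherwise, since $u$ and $v$ are non-cut-vertices, each lies in a unique block, giving block-vertices $\beta_u,\beta_v$ of the tree $\mathcal{B}(H)$; let $Q$ be the $\beta_u$--$\beta_v$ path in $\mathcal{B}(H)$. Any cut-vertex off $Q$ is avoided by a $u$--$v$ walk of $H$ shadowing $Q$, contradicting the previous paragraph; and any block off $Q$, attached to $Q$ through a cut-vertex $c$, cannot be traversed by a $u$--$v$ path (which would have to enter and leave that branch through $c$), contradicting the first step. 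Therefore $\mathcal{B}(H)=Q$ is a path whose two end vertices are the blocks $\beta_u,\beta_v$, and $e=uv$ joins the internal vertex $u$ of $\beta_u$ to the internal vertex $v$ of $\beta_v$ --- the second alternative. I expect the only delicate point to be the first step together with the repeated use of the principle that a $u$--$v$ path cannot cross a cut-vertex twice; the rest is routine bookkeeping on the block--cut tree.
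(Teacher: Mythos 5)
Your proof is correct in substance but takes a genuinely different route from the paper's. The paper argues directly with $G$: assuming $G-\{e\}$ has at least two blocks, the two endpoints of $e$ lie in distinct blocks $B_1,B_2$ of $G-\{e\}$, and putting $e$ back merges precisely the blocks on the $B_1$--$B_2$ path of the tree $\mathcal{B}(G-\{e\})$ into a single block of $G$; since $G$ is 2-vertex-connected it has only one block, so no block of $G-\{e\}$ can lie off that path, and $e$ must meet internal vertices of the two end blocks, since meeting the cut-vertex of an end block would leave a cut-vertex in $G$. Your argument instead routes everything through the classical fact that any two edges of a 2-connected graph lie on a common cycle, so every block of $H=G-\{e\}$ carries an edge of some $u$--$v$ path of $H$, and then reads off the path shape of $\mathcal{B}(H)$ by block--cut-tree bookkeeping. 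Both are valid; the paper's is shorter because the single observation that the blocks between $B_1$ and $B_2$ fuse into one block of $G$ does all the work, while yours is more self-contained, trading that for the two-edges-on-a-cycle lemma and a longer tree analysis.

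Two points in your write-up should be tightened, though neither is a fatal gap. First, to get the statement you actually use later, namely that every cut-vertex of $H$ lies on \emph{every} $u$--$v$ path, you must run the contradiction for a cut-vertex $c$ missed by \emph{some} $u$--$v$ path (equivalently, $c$ does not separate $u$ from $v$), not for one lying on \emph{no} $u$--$v$ path as you wrote; the mechanism is unchanged, since then $u$ and $v$ sit in one component of $H-c$ and, $c$ being a cut-vertex, some other component avoids both, producing a block that no $u$--$v$ path traverses. Second, a $u$--$v$ walk ``shadowing'' $Q$ does not automatically avoid a cut-vertex $c$ hanging off $Q$, because $c$ may still be a vertex of a block that lies on $Q$; you need to choose the within-block subpaths to avoid $c$ (possible since each block is a bridge or 2-connected), or simply drop this step altogether: once every block lies on $Q$, every cut-vertex lies on $Q$ as well, for a cut-vertex node off $Q$ adjacent to two blocks of $Q$ would create a cycle in the tree $\mathcal{B}(H)$.
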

	\begin{cproof}
	We may assume that $G-\{e\}$ has at least two blocks. Since $G$ is 2-vertex-connected, $e$ connects two distinct blocks $B_1,B_2$ of $G-\{e\}$. Recall that $\mathcal{B}(G-\{e\})$ is a tree, so there is a unique path between $B_1$ and $B_2$ in $\mathcal{B}(G-\{e\})$. Then, after putting $e$ back, the blocks of $G-\{e\}$ on the path between $B_1$ and $B_2$ become a single block in $G$. In fact, since $G$ is 2-vertex-connected, $G$ has no other block. This implies that $G-\{e\}$ has no block other than the ones on $C$. So, $\mathcal{B}(G-\{e\})$ contains no vertex outside $C$, and therefore, $\mathcal{B}(G-\{e\})$ is a path where $B_1,B_2$ are its two ends. If $e$ is not incident to an internal vertex of $B_1$, then $e$ is incident to the cut-vertex of $B_1$, implying that $B_1$ is separated from $B_2$ in $G$, a contradiction. Thus $e$ is incident to an internal vertex of $B_1$. Similarly, $e$ is incident to an internal vertex of $B_2$, as required.  
	\end{cproof}
	
	Next, we claim the following:
	\begin{claim}
	All but at most one block of $G-\{e\}$ are bridges.
	\end{claim}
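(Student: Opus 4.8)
The plan is to argue by contradiction: I will assume that $G-\{e\}$ has two distinct blocks $B,B^\prime$, neither of which is a bridge, and I will produce a $K_4/e$ graph minor in $G$, contradicting the hypothesis. Recall from Claim~1 that, since $G-\{e\}$ has at least two blocks, $\mathcal{B}(G-\{e\})$ is a path and $e=uv$ where $u,v$ are internal vertices of the two end blocks of that path; in particular the blocks of $G-\{e\}$ are linearly ordered along the path, and consecutive blocks meet in a single cut-vertex. By the induction hypothesis $B$ and $B^\prime$ are circuits or subdivisions of $A_t$'s (with $t\ge 3$), so each has at least two edges and, between any two of its vertices, contains two internally disjoint paths.

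First I would build a cycle $Z$ in $G$ ``threading'' all of $G-\{e\}$: start at $u$, pass through the first block to its cut-vertex, then through the second block to the next cut-vertex, and so on to $v$, and finally close up using the edge $e$. Inside each block I may route along any path between the two relevant attachment vertices (an edge, if the block is a bridge); I would arrange in particular that $Z$ meets $B$ along a path $P$ and $B^\prime$ along a path $P^\prime$. Using that $B$ is $2$-connected I would then choose a second path $Q$ in $B$ joining the two endpoints of $P$ and internally disjoint from $P$, hence internally disjoint from all of $Z$; similarly a path $Q^\prime$ in $B^\prime$ internally disjoint from $Z$. The resulting subgraph $Z\cup Q\cup Q^\prime$ of $G$ is a cycle together with two disjoint chord-paths $Q,Q^\prime$ spanning the disjoint arcs $P,P^\prime$ of $Z$.

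To finish, I would take the minor of $G$ obtained by first deleting everything outside $Z\cup Q\cup Q^\prime$ and then contracting: each of $P$ and $Q$ to a single edge; each of $P^\prime$ and $Q^\prime$ to a single edge; the arc of $Z$ running between the $B$-part and the $B^\prime$-part down to a single vertex (identifying the ``inner'' attachment vertices of $B$ and $B^\prime$); and the complementary arc of $Z$ --- the one through $e$ --- down to a single edge. The outcome has exactly three vertices: the two ``outer'' attachment vertices and the identified inner one. Between the outer vertex of $B$ and the inner vertex there are two edges (images of $P$ and $Q$); between the inner vertex and the outer vertex of $B^\prime$ there are two edges (images of $P^\prime$ and $Q^\prime$); and between the two outer vertices there is one edge (the image of the $e$-arc). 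This is precisely $K_4/e$, the desired contradiction.

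The step I expect to be the main obstacle is the bookkeeping in this last contraction: I must verify that the two ``doubled'' edges genuinely survive as parallel pairs --- which is exactly where the internal disjointness of $P,Q$ (resp.\ $P^\prime,Q^\prime$) is used --- and that the arc contracted to a single edge is nonempty, which holds because it contains $e$. I also need to dispatch the degenerate configurations: $B$ or $B^\prime$ may be an end block of the path (so one of its attachment vertices on $Z$ is $u$ or $v$ rather than a cut-vertex), and $B,B^\prime$ may be consecutive along the path (so the arc ``between'' them is a single shared cut-vertex and its contraction is vacuous). In each such case one or more of the prescribed contractions is simply trivial, and the argument goes through unchanged; these cases require attention but no new idea.
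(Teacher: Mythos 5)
Your proof is correct and follows essentially the same route as the paper: using the path structure of the block tree from Claim~1, you exhibit a $K_4/e$ minor consisting of a ``theta'' inside each of the two non-bridge blocks, joined through the intermediate blocks and closed up by the edge $e$. The only cosmetic difference is that the paper picks a cycle in each non-bridge block and contracts the remaining blocks wholesale, whereas you thread a cycle $Z$ through all blocks and double it inside $B,B'$ via two internally disjoint paths between the attachment vertices --- slightly more explicit bookkeeping, but the same construction.
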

	\begin{cproof}
	We may assume that $G-\{e\}$ has at least two blocks. Then, by Claim~1, $\mathcal{B}(G-\{e\})$ is a path $B_1,u_1,B_2,\ldots,u_{k-1},B_k$ for some $k\geq 2$, where $B_1,\ldots, B_k$ are the blocks of $G-\{e\}$ and $u_\ell$ is the cut-vertex separating $B_\ell$ and $B_{\ell+1}$ for $\ell\in[k-1]$. Moreover, by Claim~1, $e=u_0u_k$, where $u_0$ is an internal vertex of $B_1$ and $u_k$ is an internal vertex of $B_k$.
		
		Suppose for a contradiction that $G-\{e\}$ has two blocks that are not bridges. Then $B_i, B_j$ for some distinct $i,j\in[k]$ are not bridges. In particular, $B_i$ and $B_j$ have cycles $C_i$ and $C_j$, respectively. Here, both $C_i$ and $C_j$ have at least two edges as $G-\{e\}$ has no loop. After contracting the edges of $B_\ell$ for $\ell\in [k]-\{i,j\}$ from $G-\{e\}$, the vertices in $B_1,\ldots, B_{i-1}$ are identified with $u_{i-1}$, the vertices in $B_{i+1},\ldots, B_{j-1}$ are identified with $u_{j-1}$, and the vertices in $B_{j+1},\ldots, B_{k}$ are identified with $u_{j}$. Therefore, the resulting graph is $u_{i-1},B_i,u_{j-1},B_j,u_j$, where $u_{i-1}$ and $u_j$ are internal vertices of $B_i$ and $B_j$, respectively, and $u_{j-1}$ is the cut-vertex separating $B_i,B_j$. Notice that $e$ connects $u_{i-1}$ and $u_j$ after the contraction, because $u_0,u_k$ were identified with $u_{i-1},u_j$, respectively (see Figure~\ref{fig:K_4/e case 3} for an illustration). 
		\begin{figure}[h!]
			\begin{center}
				\begin{tikzpicture}
				\draw[fill=black] (0,0) circle (3pt);
				\draw[fill=black] (2,0) circle (3pt);
				\draw[fill=black] (4,0) circle (3pt);
				
				\node[-,label=left:{$u_{i-1}$}] (1) at (0,0) {};
				\node[-,label=right:{$u_{j}$}] (3) at (4,0) {};
				\node[-] (2) at (2,0) {};
				\node[-,label=below:{$u_{j-1}$}] (4) at (2,-0.2) {};
				\node[-,label={$e$}] (5) at (2,0.75) {};
				\path[thick, bend left=50] (1) edge (2);
				\path[thick, bend right=50] (1) edge (2);
				\path[thick, bend left=50] (2) edge (3);
				\path[thick, bend right=50] (2) edge (3);
				
				\path[thick, bend left=100] (1) edge (3);
				
				\node[-] (a) at (1,0) {$C_i$};
				\node[-] (a) at (3,0) {$C_j$};
				\end{tikzpicture}
				\caption{$e=u_{i-1}u_j$}\label{fig:K_4/e case 3}
			\end{center}
		\end{figure}
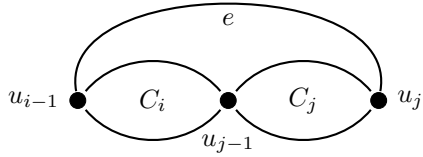
	We then delete the edges outside of the cycles $C_i,C_j$. After adding $e$ back, we obtain a subdivision of $K_4/e$, a contradiction as $G$ has no $K_4/e$ as a graph minor.	Therefore, at most one block of $G-\{e\}$ is a bridge.
	\end{cproof}
	
	If every block of $G-\{e\}$ is a bridge, then it follows from Claim~1 that $G$ is a circuit. Thus we may assume that a block $B$ of $G-\{e\}$ is a circuit or a subdivision of $A_t$ for some $t\geq 3$. Then, by Claim~2, the other blocks of $G-\{e\}$ are bridges. 
	\begin{claim}
		$G$ is the union of $B$ and a path $P$ whose ends are two vertices in $B$ and the other vertices are disjoint from $V(B)$.
	\end{claim}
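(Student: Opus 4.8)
The plan is to read the desired structure off Claims~1 and~2, together with the reduction preceding Claim~3 (which guarantees that exactly one block $B$ of $G-\{e\}$ is a circuit or a subdivision of $A_t$, while every other block is a bridge). First I would dispose of the trivial case: if $G-\{e\}$ is $2$-vertex-connected, then $G-\{e\}=B$ and, since $G$ is loopless, $e$ joins two distinct vertices of $B$, so $P:=\{e\}$ works. So from now on assume $G-\{e\}$ has at least two blocks; by Claim~1, $\mathcal{B}(G-\{e\})$ is a path $B_1,u_1,B_2,\ldots,u_{k-1},B_k$ with $k\geq 2$, where $u_i$ is the cut-vertex between $B_i$ and $B_{i+1}$, and $e=u_0u_k$ with $u_0$ the (degree-one) internal vertex of $B_1$ and $u_k$ the internal vertex of $B_k$. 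Write $B=B_m$.

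Next I would assemble the path $P$ from the bridge blocks. Since each $B_i$ with $i\neq m$ is a single edge, $B_i=u_{i-1}u_i$, so $B_1,\ldots,B_{m-1}$ glue into a path $Q_1$ on the vertices $u_0,u_1,\ldots,u_{m-1}$ (a single vertex when $m=1$), and $B_{m+1},\ldots,B_k$ glue into a path $Q_2$ on $u_m,u_{m+1},\ldots,u_k$ (a single vertex when $m=k$). Because $u_{m-1}$ separates $B_{m-1}$ from $B_m$ and $u_m$ separates $B_m$ from $B_{m+1}$ — and because $u_0$ is a vertex of $B_1$ (which equals $B$ when $m=1$) and $u_k$ a vertex of $B_k$ (which equals $B$ when $m=k$) — both $u_{m-1}$ and $u_m$ lie in $V(B)$. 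Setting $P:=Q_1\cup\{e\}\cup Q_2$, the vertex sequence $u_{m-1},u_{m-2},\ldots,u_0,u_k,u_{k-1},\ldots,u_m$ has no repetitions ($u_0,u_k$ are degree-one vertices in distinct blocks, and $u_1,\ldots,u_{k-1}$ are distinct cut-vertices of the tree $\mathcal{B}(G-\{e\})$), so $P$ is a path from $u_{m-1}$ to $u_m$. Its edges are exactly $\{e\}\cup\bigcup_{i\neq m}E(B_i)$, which together with $E(B)$ partition $E(G)$; hence $G=B\cup P$. Finally, each internal vertex of $P$ is one of $u_0,\ldots,u_k$ other than $u_{m-1},u_m$, and each such vertex belongs only to bridge blocks distinct from $B_m$ (again because $\mathcal{B}(G-\{e\})$ is a path, so it has maximum degree two), hence lies outside $V(B)$.

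The last thing to check is that the ends $u_{m-1},u_m$ of $P$ are distinct. If $2\leq m\leq k-1$ and $u_{m-1}=u_m$, this common vertex would be adjacent in $\mathcal{B}(G-\{e\})$ to each of $B_{m-1},B_m,B_{m+1}$, contradicting that $\mathcal{B}(G-\{e\})$ is a path; if $m=1$ then $u_{m-1}=u_0$ is not a cut-vertex while $u_m=u_1$ is, and symmetrically for $m=k$. I expect the only mildly delicate part of writing this up to be the bookkeeping around the degenerate cases ($G-\{e\}$ being $2$-connected, or $B$ sitting at an end of the block path so that $Q_1$ or $Q_2$ is trivial), together with the repeated appeal to the path structure of $\mathcal{B}(G-\{e\})$ to guarantee both that $u_{m-1}\neq u_m$ and that no internal vertex of $P$ strays into $V(B)$; everything else is a direct unwinding of Claims~1 and~2.
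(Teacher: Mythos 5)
Your proposal is correct and follows essentially the same route as the paper: the paper's (much terser) argument likewise uses Claim~1 to see that $e$ together with the bridge blocks of $G-\{e\}$ forms a path $P$ joining two vertices of $B$, with interior vertices lying in blocks other than $B$ and hence outside $V(B)$. Your write-up merely makes explicit the bookkeeping (the block-tree path, the degenerate cases $m=1$, $m=k$, and distinctness of the ends) that the paper leaves implicit; the only nitpick is that the parenthetical ``degree-one'' for $u_0$ is inaccurate when $B=B_1$, though your argument does not actually rely on it there.
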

	\begin{cproof}
		It follows from Claim~1 that $e$ and the bridges of $G-\{e\}$ form a path $P$ connecting two vertices of $B$. An interior vertex of $P$, if exists, is in a block of $G-\{e\}$ other than $B$, so it is not contained in $V(B)$, as required.
	\end{cproof}
	
	As $B$ is a circuit or a subdivision of $A_t$ for some $t\geq 3$, $B$ is a disjoint union of internally vertex-disjoint $uv$-paths for some distinct $u,v\in V(B)$. Let $P_1,\ldots,P_t$ be the $uv$-paths. 
	\begin{claim}
		If $t=2$, $G$ is a subdivision of $A_3$.
	\end{claim}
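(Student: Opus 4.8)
The plan is to read off the structure of $G$ in the case $t=2$ — where $B$ is a circuit — directly from what Claims~1--3 have already established. By Claim~3 we have $G=B\cup P$, where $P$ is a path, its two ends lie in $V(B)$, and its interior vertices lie outside $V(B)$. So the whole task reduces to recognising that the union of a cycle with a path having both ends on the cycle and its interior off the cycle is a theta graph, that is, a subdivision of $A_3$.

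First I would pin down that the two ends $x,y$ of $P$ are distinct. The path $P$ has at least one edge, since it contains $e$. If $x=y$, then either $P$ is a single edge, which would be a loop at $x$, contradicting that $G$ is loopless; or $P$ has an interior vertex, in which case $B$ and $P$ meet only at $x$, each of $B-x$ and $P-x$ is a nonempty connected graph, and they are disconnected from one another in $G-x$, so $x$ is a cut-vertex of $G$, contradicting $2$-vertex-connectivity. Hence $x\neq y$.

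Next I would split the circuit $B$ at its two distinct vertices $x$ and $y$ into the two arcs $Q_1,Q_2$, which are internally vertex-disjoint $xy$-paths, each with at least one edge. Since the interior of $P$ avoids $V(B)=V(Q_1)\cup V(Q_2)$ and $P$ has at least one edge, the paths $Q_1,Q_2,P$ are three internally vertex-disjoint $xy$-paths whose union is precisely $G$. By definition, such a graph is a subdivision of $A_3$, which is exactly the claim.

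I do not expect a genuine obstacle in this step; the only point requiring a line of care is the distinctness of the ends of $P$, which is where looplessness and $2$-vertex-connectivity of $G$ are used. (The analogous bookkeeping for $t\geq 3$ — where $B$ is a subdivision of $A_t$ and the placement of the ends of $P$ must be controlled so as not to create a $K_4/e$ — is handled by the claims that follow, not by Claim~4.)
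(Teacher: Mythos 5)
Your argument is correct and follows essentially the same route as the paper: by Claim~3, $G$ is the circuit $B$ together with a path $P$ whose ends lie on $B$ and whose interior avoids $B$, so splitting $B$ at the two ends of $P$ exhibits $G$ as three internally vertex-disjoint paths between two vertices, i.e.\ a subdivision of $A_3$. Your extra check that the two ends of $P$ are distinct (via looplessness and $2$-vertex-connectivity) is a harmless refinement that the paper leaves implicit in the word ``path''.
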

	\begin{cproof}
		If $t=2$, $B$ is a circuit and $P$ connects two vertices on the cycle by Claim~3. So, $G$ is the union of three internally vertex-disjoint paths connecting the two vertice, implying in turn that $G$ is a subdivision of $A_3$.
	\end{cproof}
	
	By Claim~4, we may assume that $t\geq 3$. We will show that $P$ is also a path connecting $u$ and $v$, thereby proving that $G$ is a subdivision of $A_{t+1}$, obtained from $uv$-paths $P_1,\ldots,P_t,P$.
	\begin{claim}
		$P$ is an $uv$-path.
	\end{claim}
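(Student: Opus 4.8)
The plan is to argue by contradiction. Assume $P$ is \emph{not} a $uv$-path; I will then exhibit a subdivision of $K_4$ or of $K_4/e$ inside $G$, which in either case produces a $K_4/e$ graph minor (recall $K_4/e$ is a minor of $K_4$), contradicting the standing hypothesis that $G$ has no $K_4/e$ graph minor. Let $x,y$ be the two endpoints of $P$; by Claim~3 they are distinct vertices of $B$ and $P$ meets $B$ exactly in $\{x,y\}$. Since $\{x,y\}\neq\{u,v\}$, at least one of them, say $x$, is an interior vertex of one of the internally disjoint $uv$-paths, say $P_1$, which I write as $P_1=Q\cup R$ with $Q$ its $u$--$x$ subpath and $R$ its $x$--$v$ subpath. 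I would split into cases according to where $y$ lies.

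\textbf{Case 1: $y$ is an interior vertex of some $P_j$ with $j\neq1$.} I would pick any third path $P_k$ with $k\notin\{1,j\}$, which exists because $t\geq3$. In $P_1\cup P_j\cup P_k\cup P$ exactly the vertices $u,v,x,y$ have degree three; after suppressing the degree-two vertices this subgraph becomes $K_4$ on $\{u,v,x,y\}$, using the two subpaths of $P_1$ as $ux$ and $xv$, the two subpaths of $P_j$ as $uy$ and $yv$, $P_k$ as $uv$, and $P$ as $xy$ (these six paths being pairwise internally disjoint). Thus $G$ has a $K_4$ minor, a contradiction.

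\textbf{Case 2: $y=u$ (the case $y=v$ is symmetric).} Here $Q$ and $P$ are two internally disjoint $u$--$x$ paths; taking two more paths $P_2,P_3$ distinct from $P_1$ and suppressing degree-two vertices of $P_1\cup P_2\cup P_3\cup P$ gives the multigraph on $\{u,x,v\}$ with $ux$ doubled (by $Q$ and $P$), $uv$ doubled (by $P_2$ and $P_3$), and a single edge $xv$ (by $R$), i.e.\ $K_4/e$. \textbf{Case 3: $y$ is an interior vertex of $P_1$.} Then $x\neq y$ are both interior to $P_1$, so $P_1$ splits as $Q_1\cup Q_2\cup Q_3$ with $Q_2$ an $x$--$y$ path, and $Q_2$ together with $P$ forms two internally disjoint $x$--$y$ paths; taking $P_2,P_3$ distinct from $P_1$ and contracting $Q_1$ to a single vertex $p$ yields the multigraph on $\{p,y,v\}$ with $py$ doubled (from $Q_2$ and $P$), $pv$ doubled (from $P_2$ and $P_3$), and a single edge $yv$ (from $Q_3$), again $K_4/e$. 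Both cases contradict the hypothesis.

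Since every case is impossible, $P$ must be a $uv$-path, and then $G$ is built from the internally disjoint $uv$-paths $P_1,\dots,P_t$ and $P$, hence is a subdivision of $A_{t+1}$, as claimed. The one point I expect to require care is the case split itself, together with the observation in Case~1 that the third path $P_k$ is genuinely needed: the subgraph $P_1\cup P_j\cup P$ alone is only a subdivision of $K_4$ minus an edge, which contains no $K_4/e$ minor, so the argument relies essentially on $t\geq3$. The remaining work is routine path-and-suppression bookkeeping.
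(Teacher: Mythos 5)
Your proof is correct and follows essentially the same route as the paper: split on where the ends of $P$ attach (an end equal to $u$ or $v$, both ends interior to the same $P_i$, or interior to two different $P_i$'s) and in each case exhibit a forbidden topological minor using $P$ together with three of the $uv$-paths, which is exactly where $t\geq 3$ is used. The only cosmetic difference is in the two-different-paths case, where you display a full $K_4$ subdivision and invoke $K_4/e\preceq K_4$, while the paper contracts $P$ to obtain a $K_4/e$ subdivision directly; both are fine.
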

	\begin{cproof}
		Suppose for a contradiction that $P$ is not a $uv$-path. Then one of $P$'s two ends is not in $\{u,v\}$.
		
		First, consider the case when one end of $P$ is in $\{u,v\}$. Without loss of generality, we may assume that one end of $P$ is $u$ and the other end is $w\in V-\{u,v\}$. 
		Without loss of generality, assume that $w$ is on $P_1$. Then the subgraph of $G$ obtained after deleting the edges $E-E(P)\cup E(P_1)\cup E(P_2)\cup E(P_3)$ (see Figure~\ref{fig:K_4/e case 1} for an illustration) is a subdivision of $K_4/e$, contradicting the assumption that $G$ has no $K_4/e$ as a graph minor.
		\begin{figure}[h!]
			\begin{center}
				\begin{tikzpicture}
				\draw[fill=black] (0,0) circle (3pt);
				\draw[fill=black] (4,0) circle (3pt);
				\draw[fill=black] (2.5,0.57) circle (3pt);
				
				\node[-,label=left:{$u$}] (1) at (0,0) {};
				\node[-,label=right:{$v$}] (2) at (4,0) {};
				\node[-,label=above right:{$w$}] (5) at (2.5,0.5) {};
				\node[-,label=above:{$P$}] (6) at (1,0.9) {};
				
				\path[thick, bend left=100] (1) edge (5);
				\path[thick, bend left] (1) edge (2);
				\path[thick, bend right] (1) edge (2);
				\path[thick] (1) edge (2);
				
				\node[-,label=below:{$P_1$}] (a) at (2,0.8) {};
				\node[-,label=below:{$P_2$}] (b) at (2,0.2) {};
				\node[-,label=below:{$P_3$}] (a) at (2,-0.5) {};
				\end{tikzpicture}
				\caption{$w\not\in\{u,v\}$}\label{fig:K_4/e case 1}
			\end{center}
		\end{figure}
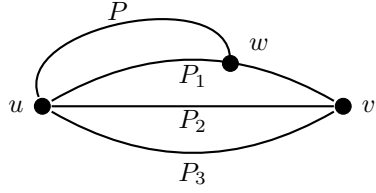
		
		Now consider the case when both ends of $P$ are not in $\{u,v\}$. Let the ends of $P$ be $w_1,w_2\in V-\{u,v\}$. There are two cases to consider: $w_1,w_2$ are on the same $uv$-path of $B$, or $w_1,w_2$ are on different $uv$-paths.
		If $w_1,w_2$ are on the same $uv$-path, we may assume that they are on $P_1$ without loss of generality. In this case, deleting the edges $E-E(P)\cup E(P_1)\cup E(P_2)\cup E(P_3)$ and contracting the edges of the $uw_1$-path on $P_1$ (see Figure~\ref{fig:K_4/e case 2} for an illustration), we obtain a subdivision of $K_4/e$, a contradiction.
		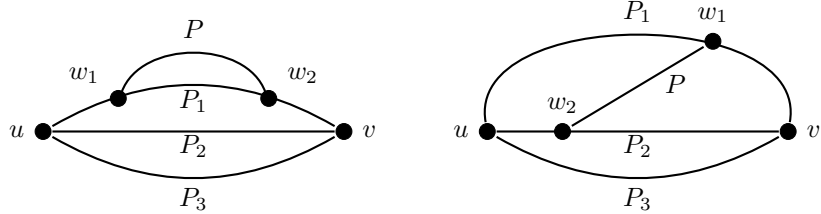
\begin{figure}[h!]
			\begin{center}
				\begin{tikzpicture}
				\draw[fill=black] (0,0) circle (3pt);				\draw[fill=black] (4,0) circle (3pt);
				\draw[fill=black] (3,0.44) circle (3pt);
				\draw[fill=black] (1,0.44) circle (3pt);
				
				\node[-,label=left:{$u$}] (1) at (0,0) {};
				\node[-,label=right:{$v$}] (2) at (4,0) {};
				\node[-,label=above left:{$w_1$}] (7) at (1,0.4) {};
				\node[-,label=above right:{$w_2$}] (5) at (3,0.4) {};
				\node[-,label=above:{$P$}] (6) at (2,1) {};
				
				\path[thick, bend left=70] (7) edge (5);
				\path[thick, bend left] (1) edge (2);
				\path[thick, bend right] (1) edge (2);
				\path[thick] (1) edge (2);
				
				\node[-,label=below:{$P_1$}] (a) at (2,0.8) {};
				\node[-,label=below:{$P_2$}] (b) at (2,0.2) {};
				\node[-,label=below:{$P_3$}] (a) at (2,-0.5) {};
				\end{tikzpicture}
				\quad\quad
				\begin{tikzpicture}
				\draw[fill=black] (0,0) circle (3pt);
				\draw[fill=black] (4,0) circle (3pt);
				\draw[fill=black] (3,1.2) circle (3pt);
				\draw[fill=black] (1,0) circle (3pt);
				
				\node[-,label=left:{$u$}] (1) at (0,0) {};
				\node[-,label=right:{$v$}] (2) at (4,0) {};
				\node[-,label=above:{$w_2$}] (7) at (1,0) {};
				\node[-,label=above:{$w_1$}] (5) at (3,1.2) {};
				\node[-,label=below:{$P$}] (6) at (2.5,1) {};
				
				\path[thick] (7) edge (5);
				\path[thick, bend left=100] (1) edge (2);
				\path[thick, bend right] (1) edge (2);
				\path[thick] (1) edge (2);
				
				\node[-,label=above:{$P_1$}] (a) at (2,1.2) {};
				\node[-,label=below:{$P_2$}] (b) at (2,0.2) {};
				\node[-,label=below:{$P_3$}] (a) at (2,-0.5) {};
				\end{tikzpicture}
				\caption{$w_1,w_2\notin\{u,v\}$}\label{fig:K_4/e case 2}
			\end{center}
		\end{figure}
		
		If $w_1,w_2$ are on different $uv$-paths, we may assume that $w_1$ is on $P_1$ and $w_2$ is on $P_2$ without loss of generality. Deleting the edges $E-E(P)\cup E(P_1)\cup E(P_2)\cup E(P_3)$ and contracting the edges of $P$ (see Figure~\ref{fig:K_4/e case 2} for an illustration), we obtain a subdivision of $K_4/e$, a contradiction as $G$ has no $K_4/e$ as a graph minor. 
	\end{cproof}
	\noindent
	By Claims~3 and~5, $P$ is an $uv$-path that is internally vertex-disjoint from $P_1,\ldots, P_t$, implying in turn that $G$ is a subdivision of $A_{t+1}$. This finishes the proof.
\end{proof}

\section{Proof of \Cref{PR:edges of size 1 or 2}}\label{sec:proof-hyperedge-prop}

\begin{proof}[{\bf Proof of \Cref{PR:edges of size 1 or 2}}]
	{\bf (1)} By \Cref{LE:hyperedges}, $C$ is a member of size 1 if and only if $C=\{\sigma+\alpha_i\}$ for some $i\in[n]$. Therefore, $\{\alpha_1+\sigma\},\ldots,\{\alpha_n+\sigma\}$ are the members of size 1 in $\local(S,\alpha)$, as required.
	
	{\bf (2)} First, we will argue that a member of cardinality 2 contains none of $\alpha_1+\sigma,\ldots,\alpha_n+\sigma$. Let $\{u,v\}$ be a member of size 2 where $u\in U_i$ and $v\in U_j$ for some $i\neq j$. Then we get $u+v=\sigma+\alpha_i+\alpha_j$ by \Cref{LE:hyperedges}. If $u=\alpha_i+\sigma$, then $v=\alpha_j$, contradicting the assumption that $v\in U_j=GF(q)-\{\alpha_j\}$. Therefore, the members of cardinality 2 are contained in $U^\prime:=\left(U_1-\{\alpha_1+\sigma\}\right)\cup\cdots\cup \left(U_n-\{\alpha_n+\sigma\}\right)$. Notice that we have preserved the symmetry between $U_1-\{\alpha_1+\sigma\},\ldots,U_n-\{\alpha_n+\sigma\}$ and that $U_1-\{\alpha_1+\sigma\}$ is not different from the other $U_i-\{\alpha_i+\sigma\}$'s.
	
	Observe that $U_1-\{\alpha_1+\sigma\}=GF(q)-\{\alpha_1,\alpha_1+\sigma\}$ has $q-2$ elements and that $U_1-\{\alpha_1+\sigma\}$ can be partitioned as $U_1-\{\alpha_1+\sigma\}=\left\{\beta_1^1,\beta_1^1+\sigma\right\}\cup\cdots\cup \left\{\beta_1^{\frac{q}{2}-1},\beta_1^{\frac{q}{2}-1}+\sigma\right\}$, with $\frac{q}{2}-1$ sets of cardinality 2, where $\beta_1^1,\ldots,\beta_1^{\frac{q}{2}-1}$ are distinct elements. For $i=2,\ldots, n$ and $j=1,\ldots,\frac{q}{2}-1$, we denote by $\beta_i^j\in U_i$ the element satisfying $\beta_i^j=\beta_1^j+\alpha_1+\alpha_i$. 
	\begin{claim}
		$U_i-\{\alpha_i+\sigma\}=\left\{\beta_i^1,\beta_i^1+\sigma\right\}\cup\cdots\cup \left\{\beta_i^{\frac{q}{2}-1},\beta_i^{\frac{q}{2}-1}+\sigma\right\}$ for $i=1,\ldots,n$.
	\end{claim}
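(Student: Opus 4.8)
The plan is to obtain the decomposition of $U_i-\{\alpha_i+\sigma\}$ by transporting the already-established decomposition of $U_1-\{\alpha_1+\sigma\}$ through an explicit affine bijection of $GF(q)$. For $i\in\{2,\ldots,n\}$, set $\phi_i\colon GF(q)\to GF(q)$, $\phi_i(x):=x+\alpha_1+\alpha_i$; this is a bijection since $GF(q)$ is an abelian group under addition. For $i=1$ the asserted decomposition is exactly how $\beta_1^1,\ldots,\beta_1^{\frac{q}{2}-1}$ were chosen, so there is nothing to prove, and I would only treat $i\geq 2$.

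First I would check that $\phi_i$ carries the two-element set $\{\alpha_1,\alpha_1+\sigma\}$ onto $\{\alpha_i,\alpha_i+\sigma\}$. Since $GF(q)$ has characteristic $2$, we have $\phi_i(\alpha_1)=\alpha_1+\alpha_1+\alpha_i=\alpha_i$ and $\phi_i(\alpha_1+\sigma)=\alpha_1+\sigma+\alpha_1+\alpha_i=\alpha_i+\sigma$. Hence $\phi_i$ restricts to a bijection from $U_1-\{\alpha_1+\sigma\}=GF(q)-\{\alpha_1,\alpha_1+\sigma\}$ onto $U_i-\{\alpha_i+\sigma\}=GF(q)-\{\alpha_i,\alpha_i+\sigma\}$; in particular $\phi_i(\beta_1^j)\in U_i$, which is consistent with the defining relation $\beta_i^j=\beta_1^j+\alpha_1+\alpha_i=\phi_i(\beta_1^j)$.

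Next I would note that $\phi_i$ commutes with translation by $\sigma$, that is, $\phi_i(x+\sigma)=\phi_i(x)+\sigma$ for all $x$. Therefore $\phi_i$ maps each pair $\{\beta_1^j,\beta_1^j+\sigma\}$ onto $\{\phi_i(\beta_1^j),\phi_i(\beta_1^j)+\sigma\}=\{\beta_i^j,\beta_i^j+\sigma\}$. Because the $\frac{q}{2}-1$ pairs $\{\beta_1^j,\beta_1^j+\sigma\}$ partition $U_1-\{\alpha_1+\sigma\}$ and $\phi_i$ is a bijection onto $U_i-\{\alpha_i+\sigma\}$, their images $\{\beta_i^j,\beta_i^j+\sigma\}$ partition $U_i-\{\alpha_i+\sigma\}$, which is exactly the claim. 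The bijectivity of $\phi_i$ simultaneously records that the $\beta_i^j$ are pairwise distinct, distinct from the $\beta_i^k+\sigma$ for $k\neq j$, and that $\beta_i^j\neq\beta_i^j+\sigma$ since $\sigma\neq 0$.

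I do not expect a genuine obstacle: once $\phi_i$ is written down, the argument reduces to the two one-line identities $\phi_i(\{\alpha_1,\alpha_1+\sigma\})=\{\alpha_i,\alpha_i+\sigma\}$ and $\phi_i(x+\sigma)=\phi_i(x)+\sigma$, both immediate from $\mathrm{char}(GF(q))=2$. The only point requiring a little care is bookkeeping: the elements $\beta_i^j$ for $i\geq 2$ were \emph{defined} by $\beta_i^j=\beta_1^j+\alpha_1+\alpha_i$, so the proof must invoke that relation (equivalently, use the map $\phi_i$) rather than try to re-derive the pairs from the member structure of $\local(S,\alpha)$.
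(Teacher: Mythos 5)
Your proof is correct and is essentially the paper's argument in a tidier package: the paper checks that translation by $\alpha_1+\alpha_i$ preserves distinctness of the $q-2$ elements $\beta_1^j,\beta_1^j+\sigma$ (and keeps them off $\{\alpha_i,\alpha_i+\sigma\}$) and concludes by counting, while you simply name that translation $\phi_i$ and transport the partition of $U_1-\{\alpha_1+\sigma\}$ through it. Both hinge on the same two facts you isolate, namely $\phi_i(\{\alpha_1,\alpha_1+\sigma\})=\{\alpha_i,\alpha_i+\sigma\}$ in characteristic $2$ and compatibility with adding $\sigma$, so no further comparison is needed.
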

	\begin{cproof}
		We may assume that $i\geq 2$. Let $j,\ell$ be distinct indices in $\left[\frac{q}{2}-1\right]$. As $\beta_1^j\neq\beta_1^\ell$, we get $\beta_i^j\neq \beta_i^\ell$. Similarly, $\beta_1^j\neq\beta_1^\ell+\sigma$ implies $\beta_i^j\neq \beta_i^\ell+\sigma$. Therefore, $\beta_i^1,\beta_i^1+\sigma,\ldots,\beta_i^{\frac{q}{2}-1},\beta_i^{\frac{q}{2}-1}+\sigma$ are distinct elements, so $\left\{\beta_i^1,\beta_i^1+\sigma\right\},\cdots,\left\{\beta_i^{\frac{q}{2}-1},\beta_i^{\frac{q}{2}-1}+\sigma\right\}$ partition $U_i-\{\alpha_i+\sigma\}$, as required.
	\end{cproof}
	
	By Claim~1, each element in $U^\prime$ is $\beta_i^j$ or $\beta_i^j+\sigma$ for some $i\in[n]$ and $j\in\left[\frac{q}{2}-1\right]$. Now we are ready to characterize what the members of size 2 are.
	\begin{claim}
		Let $u,v$ be distinct elements in $U^\prime$. Then $\{u,v\}$ is a member in $\local(S,\alpha)$ if and only if for some $j\in\left[\frac{q}{2}-1\right]$ and distinct $i,k\in[n]$, we have $u=\beta_i^j$ and $v=\beta_k^j+\sigma$ or $u=\beta_i^j+\sigma$ and $v=\beta_k^j$ . 
	\end{claim}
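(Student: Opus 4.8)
The plan is to reduce everything to the membership criterion of \Cref{LE:hyperedges} --- which says that a two-element set $\{u,v\}$ with $u\in U_i$, $v\in U_k$ and $i\neq k$ belongs to $\local(S,\alpha)$ if and only if $u+v=\sigma+\alpha_i+\alpha_k$ --- combined with the parametrization $\beta_i^j=\beta_1^j+\alpha_1+\alpha_i$ and the pairwise disjoint partition $U_i-\{\alpha_i+\sigma\}=\bigcup_{j}\{\beta_i^j,\beta_i^j+\sigma\}$ established in Claim~1 of this proof, and with the observation (already recorded at the start of the proof of part~(2)) that a member of size~$2$ avoids all of $\alpha_1+\sigma,\ldots,\alpha_n+\sigma$, hence is contained in $U'$.

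For the backward implication I would just verify the arithmetic: taking $u=\beta_i^j$ and $v=\beta_k^j+\sigma$ with $i\neq k$, the identity $\beta_i^j+\beta_k^j=(\beta_1^j+\alpha_1+\alpha_i)+(\beta_1^j+\alpha_1+\alpha_k)=\alpha_i+\alpha_k$ (characteristic~$2$, so $\beta_1^j+\beta_1^j=0$ and $\alpha_1+\alpha_1=0$) gives $u+v=\alpha_i+\alpha_k+\sigma$, so $\{u,v\}$ is a member by \Cref{LE:hyperedges}; the case $u=\beta_i^j+\sigma$, $v=\beta_k^j$ is identical by symmetry.

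For the forward implication, suppose $\{u,v\}$ is a member. Since a member meets each $U_i$ in at most one element, $u\in U_i$ and $v\in U_k$ with $i\neq k$, and since members of size~$2$ lie in $U'$, I may write $u\in\{\beta_i^j,\beta_i^j+\sigma\}$ and $v\in\{\beta_k^{j'},\beta_k^{j'}+\sigma\}$ for some $j,j'\in[\tfrac{q}{2}-1]$. I would then split into the four combinations and reduce each $u+v$ to its value in terms of $\beta_1$: the two ``parallel'' combinations (neither, or both, of $u,v$ carries a $+\sigma$) force $\beta_1^j+\beta_1^{j'}=\sigma$, while the two ``crossed'' combinations (exactly one carries a $+\sigma$) force $\beta_1^j+\beta_1^{j'}=0$. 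The key point, which I would isolate as a small observation, is that $\beta_1^j+\beta_1^{j'}=\sigma$ is impossible: this equation is equivalent to $\beta_1^j=\beta_1^{j'}+\sigma$, contradicting $\sigma\neq 0$ when $j=j'$, and contradicting the disjointness of the pairs $\{\beta_1^j,\beta_1^j+\sigma\}$ and $\{\beta_1^{j'},\beta_1^{j'}+\sigma\}$ when $j\neq j'$. Hence only the crossed combinations survive, and there $\beta_1^j+\beta_1^{j'}=0$ yields $\beta_1^j=\beta_1^{j'}$, so $j=j'$ by distinctness of the representatives --- which is precisely the asserted form of $\{u,v\}$.

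I expect the only real work to be bookkeeping in the four-way case split: keeping track of which pair each of $u,v$ belongs to and making sure every $+\sigma$ is correctly tallied under characteristic-$2$ cancellation. There is no conceptual obstacle, since the structural content --- the parametrization of $U_i-\{\alpha_i+\sigma\}$ and the linear membership test --- is already in hand from \Cref{LE:hyperedges} and Claim~1.
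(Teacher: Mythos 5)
Your proposal is correct and follows essentially the same route as the paper: both directions reduce to the membership test of \Cref{LE:hyperedges} together with the parametrization $\beta_i^j=\beta_1^j+\alpha_1+\alpha_i$ from Claim~1. The only difference is cosmetic: in the forward direction the paper simply solves the equation $u+v=\sigma+\alpha_i+\alpha_k$ for $v$ (immediately giving $v=\beta_k^j+\sigma$ or $v=\beta_k^j$ according to the form of $u$), which spares your four-way case split and the separate argument ruling out $\beta_1^j+\beta_1^{j'}=\sigma$, though your version of that exclusion is also valid.
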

	\begin{cproof}
		{\bf ($\Leftarrow$)} Without loss of generality, we may assume that $j=1$, $i=1$, and $k=2$.  As $\beta_2^1=\beta_1^1+\alpha_1+\alpha_2$, we have $\beta_1^1+\beta_2^1+\sigma=\alpha_1+\alpha_2+\sigma$. So, by \Cref{LE:hyperedges}, $\{u,v\}$ is a member.
		
		{\bf ($\Rightarrow$)} Without loss of generality, we may assume that $u\in U_1$, $v\in U_2$. Then $u=\beta_1^j$ or $u=\beta_1^j+\sigma$ for some $j\in\left[\frac{q}{2}-1\right]$. If $u=\beta_1^j$, then by \Cref{LE:hyperedges}, $v=\beta_1^j+\alpha_1+\alpha_2+\sigma=\beta_2^j+\sigma$. Similarly, if $u=\beta_1^j+\sigma$, we can argue that $v=\beta_2^j$, as required.
	\end{cproof}
	
	For $j\in\left[\frac{q}{2}-1\right]$, let $G_j$ denote the graph induced by the elements in $\left\{\beta_1^j,\ldots,\beta_n^j\right\}\cup \left\{\beta_1^j+\sigma,\ldots,\beta_n^j+\sigma\right\}$. By Claim~2, the edge set of $G_j$ is precisely $\left\{\left\{\beta_i^j,\beta_k^j+\sigma\right\}:\;i\neq k\right\}$. Moreover, Claim~2 also implies that there is no edge between $G_j$ and $G_\ell$ if $j\neq \ell$, as required.
\end{proof}

\end{document}